
\documentclass[sn-mathphys,a4paper,Numbered]{sn-jnl}

\usepackage{amsmath,amssymb,amsfonts}%
\usepackage{amsthm}%
\usepackage{bbold}
\usepackage{color}
\usepackage{comment}
\usepackage{hyperref}
\usepackage{graphicx,epsfig}
\usepackage{mathrsfs}%
\usepackage{manyfoot}%
\usepackage{listings}%

\definecolor{darkred}{RGB}{139,0,0}
\definecolor{darkgreen}{RGB}{0,100,0}
\definecolor{darkmagenta}{RGB}{139,0,139}
\definecolor{darkpurple}{RGB}{110,0,180}
\definecolor{darkblue}{RGB}{40,0,200}
\definecolor{darkorange}{RGB}{255,140,0}

\newcommand{\bsell}{\boldsymbol{\ell}}
\newcommand{\bsa}{\boldsymbol{a}}
\newcommand{\bsb}{\boldsymbol{b}}
\newcommand{\bsc}{\boldsymbol{c}}

\newcommand{\bsgamma}{\boldsymbol{\gamma}}
\newcommand{\bsGamma}{\boldsymbol{\Gamma}}
\newcommand{\bsx}{\boldsymbol{x}}
\newcommand{\bsz}{\boldsymbol{z}}

\newcommand{\bsw}{\boldsymbol{w}}

\newcommand{\bsm}{\boldsymbol{m}}

\newcommand{\bsk}{\boldsymbol{k}}

\newcommand{\bsy}{\boldsymbol{y}}

\newcommand{\bsN}{\boldsymbol{N}}
\newcommand{\rd}{\,{\rm d}}

\newcommand{\RR}{\mathbb{R}}
\newcommand{\NN}{\mathbb{N}}

\newcommand{\FF}{\mathbb{F}}
\newcommand{\cP}{\mathcal{P}}

\newcommand{\cL}{\mathcal{L}}
\newcommand{\cN}{\mathcal{N}}
\newcommand{\cK}{\mathcal{K}}
\newcommand{\bscK}{\boldsymbol{\mathcal{K}}}

\newcommand{\uu}{\mathfrak{u}}

\newcommand{\bI}{\mathbf{I}}
\newcommand{\bU}{\mathbf{U}}
\newcommand{\bmu}{\boldsymbol{\mu}}
\newcommand{\bSigma}{\boldsymbol{\Sigma}}
\newcommand{\blambda}{\boldsymbol{\lambda}}
\newcommand{\bLambda}{\boldsymbol{\Lambda}}

\theoremstyle{thmstyleone}%
\newtheorem{theorem}{Theorem}

\newtheorem{proposition}{Proposition}
\newtheorem{lemma}{Lemma}

\theoremstyle{thmstyletwo}%
\newtheorem{remark}{Remark}

\theoremstyle{thmstylethree}%
\newtheorem{definition}{Definition}

\allowdisplaybreaks

\raggedbottom

\begin{document}

\title[running title]{Quasi-Monte Carlo methods for mixture distributions and approximated distributions via piecewise linear interpolation}

\author[1]{\fnm{Tiangang} \sur{Cui}}\email{tiangang.cui@monash.edu}

\author[2]{\fnm{Josef} \sur{Dick}}\email{josef.dick@unsw.edu.au}

\author[3]{\fnm{Friedrich} \sur{Pillichshammer}}\email{friedrich.pillichshammer@jku.at}

\affil[1]{\orgdiv{School of Mathematics}, \orgname{Monash University}, \orgaddress{\street{9 Rainforest Walk}, \city{Melbourne}, \postcode{VIC 3800}, \country{Australia}}}

\affil[2]{\orgdiv{School of Mathematics and Statistics}, \orgname{The University of New South Wales}, \orgaddress{\city{Sydney}, \postcode{NSW 2052}, \country{Australia}}}

\affil[3]{\orgdiv{Institut f\"{u}r Finanzmathematik und Angewandte Zahlentheorie}, \orgname{Johannes Kepler Universit\"{a}t Linz}, \orgaddress{\street{Altenbergerstra{\ss}e 69}, \city{Linz}, \postcode{A-4040}, \country{Austria}}}

\abstract{
We study numerical integration over bounded regions in $\mathbb{R}^s$, $s \ge 1$, with respect to some probability measure. We replace random sampling with quasi-Monte Carlo methods, where the underlying point set is derived from deterministic constructions which aim to fill the space more evenly than random points. Ordinarily, such quasi-Monte Carlo point sets are designed for the uniform measure, and the theory only works for product measures when a coordinate-wise transformation is applied. Going beyond this setting, we first consider the case where the target density is a mixture distribution where each term in the mixture comes from a product distribution. Next we consider target densities which can be approximated with such mixture distributions. In order to be able to use an approximation of the target density, we require the approximation to be a sum of coordinate-wise products and that the approximation is positive everywhere (so that they can be re-scaled to probability density functions). We use tensor product hat function approximations for this purpose here, since a hat function approximation of a positive function is itself positive. 

We also study more complex algorithms, where we first approximate the target density with a general Gaussian mixture distribution and approximate the mixtures with an adaptive hat function approximation on rotated intervals. The Gaussian mixture approximation allows us (at least to some degree) to locate the essential parts of the target density, whereas the adaptive hat function approximation allows us to approximate the finer structure of the target density.

We prove convergence rates for each of the integration techniques based on quasi-Monte Carlo sampling for integrands with bounded partial mixed derivatives. The employed algorithms are based on digital $(t,s)$-sequences over the finite field $\FF_2$ and an inversion method. Numerical examples illustrate the performance of the algorithms for some target densities and integrands.
} 

\keywords{Weighted integration, quasi-Monte Carlo, digital sequences, mixture distributions}

\maketitle

\section{Introduction}

We are interested in the numerical approximation of $\overline{\pi}$-weighted integrals \begin{equation}\label{wintunit}
\int_D f(\bsx) \overline{\pi}(\bsx) \rd \bsx
\end{equation}
of functions $f$ over certain bounded domains $D \subset \mathbb{R}^s$ (including rectangular domains, convex domains and others), where $\overline{\pi}$ is a probability density function (PDF).  

Many statistical methods have been developed to approximate such integrals. Depending on what is known about $\overline{\pi}$, one can for instance use a Monte Carlo method (in case one can sample directly from $\pi$), or a Markov chain Monte Carlo method \cite{Liu2008,RG2004} (for instance, when the $\overline{\pi}$ is only known up to a normalizing constant), or approximate Bayesian computation \cite{sisson2007sequential} (when the underlying likelihood function is hard or impossible to compute). Further possibilities are support points \cite{MJ18}, Stein point methods \cite{pmlr-v80-chen18f}, minimum energy points \cite{JMY90}, transport maps \cite{MMPS16}, as well as other statistical methods \cite{HLD04}.

If the integrand is smooth and the density is a product of one dimensional densities whose cumulative distribution function can be inverted, then one method for approximating integrals of the form \eqref{wintunit} is using a quasi-Monte Carlo (QMC) method. These are equal weight quadrature rules to approximate integrals over the unit cube $[0,1]^s$ of the form
\begin{equation}\label{QMC_approx}
\int_{[0,1]^s} f(\bsx) \rd \bsx \approx \frac{1}{N} \sum_{n=0}^{N-1} f(\bsx_n),
\end{equation}
where $\bsx_0, \bsx_1, \ldots, \bsx_{N-1} \in [0,1]^s$ are deterministically chosen quadrature points. See \cite{DKP22, DKS13, DP10, L09, LP14, niesiam} for detailed general information about QMC methods.

Most of the theoretical results for QMC apply to integrands defined on $[0,1]^s$. If one wants to apply the theory to other domains $D$, a transformation from $D$ to $[0,1]^s$ has to be used to transform the problem to a problem over the unit cube. One case where this can be done successfully is for integrands defined on $\RR^s$ where the integrand is with respect to, for instance, the normal distribution; see \cite{KDSWW08, NK14} (consult the references there for examples of other distributions). The transformation plays a critical role in obtaining good convergence rates, but this also limits the applicability of these methods to a few (albeit important) examples. Other examples are integrals over a triangle \cite{BO15, GSY17}, or over the sphere \cite{ABD12}. A combination of statistical sampling where the random driver sequence is replaced by randomized QMC samples was studied in \cite{GC15, GC17}. Another approach in this direction is the array RQMC method \cite{LMT10}. One of the major differences between \cite{GC15} and \cite{LMT10} is the use of a space filling curve, which was further studied in \cite{SHGCN16}.

In \cite{DAFS19}, QMC was used in conjunction with importance sampling. The numerical results therein comparing various methods indicated that QMC methods work quite well and that this may be an approach warranting further consideration.

In the paper \cite{DP2020} the case where $\overline{\pi}$ is of product form, i.e., a product of univariate PDFs, and where \eqref{wintunit} is approximated by a suitably transformed quasi-Monte Carlo rule was studied. In the present paper we extend the approach of \cite{DP2020} to mixture distributions and distributions that can be sufficiently well approximated by mixture distributions.

In the first instance, in Section~\ref{sec:QMCmix} we consider mixture distributions which are a sum of product distributions. For each mixture, we can use a coordinate-wise transformation based on the inverse cumulative distribution function to transform QMC points, where the number of QMC points for each mixture depends on the weight given to that particular mixture. Using the main result from \cite{DP2020} we obtain a bound for the integration error in this case. 

In Section~\ref{sec4:QMChat} we use a hat function approximation of the target density. This approximation can be interpreted as a mixture distribution so that the results from Section~\ref{sec:QMCmix} still apply. An additional feature here is that we do not need to know the normalizing constant of the probability density $\overline{\pi}$, i.e., we assume we know only $\pi$ such that $\overline{\pi} = \pi / z$, for some unknown positive real number $z$. Although we do not know the normalizing constant, we do know how to normalize the hat functions to a probability function and we can numerically calculate the weight for each of the (one-dimensional products) of the hat functions to turn them into probability densities. The coefficient of the hat function normalized by the sum of all the coefficients is then used to determine how many QMC points we transform for this particular hat function probability density function.

Finally, in Section~\ref{sec5:partition}, we also introduce an adaptive hat function approximation, which allows one to use an adaptive refinement of the hat function approximation. The hat functions we consider are products of one-dimensional hat functions. Their support is an interval. In order to be able to better capture the structure of the target density, we also consider hat functions defined on rotated intervals. To do so, we use a linear transformation from the unit cube to the support of such hat functions. We use this more general form of a hat function approximation to the target density to better capture the important parts of the target density. Additionally, we first use a partition of unity approximation of the target density and apply the general hat function approximation to each of the parts of the partition of unity approximation of the target density. Again, we assume that the target density is only known up to the normalizing constant. 

Numerical experiments in Section~\ref{sec:numerical} illustrate the theoretical results.

\section{Background on QMC methods}

We provide some background on the QMC theory used in this paper.

\subsection{Function space setting}

QMC rules allow one to obtain convergence rates of the worst-case integration error of order $N^{-1} (\log N)^{s-1}$, where $N$ is the number of quadrature points and $s$ is the dimension, if the integrand satisfies some smoothness assumptions (see \cite{DKP22, DKS13, DP10, L09, LP14, niesiam}). In the following we state the smoothness assumptions on the integrand which we require in this paper.

We consider functions over an interval $$[\bsa,\bsb] =[a_1,b_1]\times \ldots \times [a_s,b_s],$$ where $\bsa=(a_1,\ldots,a_s)$ and $\bsb=(b_1,\ldots,b_s)$ are in $\mathbb{R}^s$, with bounded mixed partial derivatives up to order one in the $\sup$-norm and define, for functions $f$ defined on $[\bsa,\bsb]$ and for $p \in [1,\infty]$, the ``$\bsgamma$-weighted'' $p$-norm
\begin{equation}\label{def:norm}
\| f \|_{p, s,\bsgamma}:= \left( \sum_{\uu \subseteq [s]} \left( \frac{1}{\gamma_\uu} \sup_{\bsx \in [\bsa, \bsb]} \left | \frac{\partial^{|\uu|} f}{\partial \bsx_\uu} (\bsx)  \right | \right)^p \right)^{1/p},
\end{equation}
where $\bsgamma=\{\gamma_{\uu}\}_{\uu \subseteq [s]}$ (throughout the paper we abbreviate $[s]:=\{1,2,\ldots,s\}$) is a set of positive real weights, with the obvious modifications if $p = \infty$. Here, for $\uu=\{u_1,u_2,\ldots,u_k\}$ with $1 \le u_1 < u_2 < \ldots < u_k \le s$, we write $$\frac{\partial^{|\uu|} f}{\partial \bsx_\uu}:= \frac{\partial^k f}{\partial x_{u_1} \partial x_{u_2} \ldots \partial x_{u_k}}.$$ Note that for every $p \in [1,\infty]$ we have $\|f\|_{L_\infty} \le \gamma_{\emptyset} \| f \|_{p, s,\bsgamma}$ (typically, e.g., for product weights, $\gamma_{\emptyset}$ equals 1).

As usual, $\mathbb{N}$ denotes the set of positive integers and $\mathbb{N}_0:=\mathbb{N} \cup \{0\}$.

\subsection{Digital nets and sequences}

In this section we describe the construction method for the points used in the QMC rule given by \eqref{QMC_approx}.

Let $\FF_2$ be the finite field of order $2$. We identify $\FF_2$ with the set $\{0,1\}$ equipped with arithmetic operations modulo 2.

We introduce a particular type of QMC point set called digital net over $\FF_2$, and its infinite extension called digital sequence over $\FF_2$. These definitions were given first by Niederreiter~\cite{nie87}.

\begin{definition}\rm \label{def:dignet}
Let $s \ge 1$, $m \ge 1$ and $0 \le t \le m$ be integers. Choose
 $m \times m$ matrices $C_1,\ldots ,C_s$ over $\FF_2$ with the following property:

For any non-negative integers $d_1,\ldots ,d_s$ with $d_1+\cdots+d_s=m-t$ the system of the
\begin{center}
\begin{tabbing}
\hspace*{3cm}\=first \= $d_1\ \ \ $ \= rows of $C_1$, \=  together with the\\
\> first \> $d_2$ \> rows of $C_2$, \=  together with the \\
\>\hspace{1cm}$\vdots$ \\
\> first \> $d_{s-1}$ \> rows of $C_{s-1}$, \=  together with the \\
\>first \> $d_s$ \>rows of $C_s$
\end{tabbing}
\end{center}
is linearly independent over $\FF_2$.

Consider the following construction principle for point sets consisting of $2^m$ points in $[0,1)^s$: represent $n \in \{0,1,\ldots,2^m-1\}$ in base 2, say $n=n_0+n_1 2+\cdots +n_{m-1} 2^{m-1}$ with binary digits $n_0,n_1,\ldots,n_{m-1} \in \{0,1\}$, and multiply for every $j \in [s]$ the matrix $C_j$ with the vector $\vec{n} = (n_0,\ldots,n_{m-1})^{\top}$ of digits of $n$ in $\FF_2$,
\begin{eqnarray}\label{matrix_vec_net}
C_j \vec{n}=:(y_1^{(j)},\ldots ,y_m^{(j)})^{\top}.
\end{eqnarray}
Now we set
\begin{eqnarray}\label{defxni}
x_n^{(j)}:=\frac{y_1^{(j)}}{2}+ \cdots +\frac{y_m^{(j)}}{2^m}
\end{eqnarray}
and
\begin{eqnarray}
\bsx_n = (x_n^{(1)}, \ldots ,x_n^{(s)}).\nonumber
\end{eqnarray}
The point set $\{\bsx_0,\bsx_1,\ldots,\bsx_{2^m-1}\}$ is called a {\it digital $(t,m,s)$-net over $\FF_2$} and the matrices $C_1,\ldots,C_s$ are called the {\it generating matrices} of the digital net.
\end{definition}

We remark that explicit constructions for digital $(t,m,s)$-nets are known with some restrictions on the parameter $t$ (the so-called quality parameter $t$ can be independent of $m$ but depends on $s$), see for instance \cite{DP10,LP14,niesiam} for more information.\\

Digital sequences are infinite versions of digital nets.

\begin{definition}\rm
Let $C_1,\ldots, C_s \in \mathbb{F}_2^{\mathbb{N} \times \mathbb{N}}$ be $\mathbb{N} \times \mathbb{N}$ matrices over $\mathbb{F}_2$. For $j \in [s]$ and $C_j = (c_{j,k,\ell})_{k, \ell \in \mathbb{N}}$ we assume that for each $\ell \in \mathbb{N}$ there exists a $K(\ell) \in \mathbb{N}$ such that $c_{j,k,\ell} = 0$ for all $k > K(\ell)$. Assume that for every $m \ge t$ the upper left $m \times m$ submatrices $C_1^{(m \times m)},\ldots,C_s^{(m \times m)}$ of $C_1,\ldots,C_s$, respectively, generate a digital $(t,m,s)$-net over $\FF_2$.

Consider the following construction principle for infinite sequences of points in $[0,1)^s$: represent $n \in \NN_0$ in base 2, say $n = n_0 + n_1 2 + \cdots + n_{m-1} 2^{m-1} \in \mathbb{N}_0$  with binary digits $n_0,n_1,\ldots \in \{0,1\}$, and multiply for every $j \in [s]$ the matrix $C_j$ with the infinite vector $\vec{n} = (n_0, n_1, \ldots, n_{m-1}, 0, 0, \ldots )^\top$ from $\mathbb{F}_2^{\mathbb{N}}$,
\begin{equation}\label{eq_dig_seq}
C_j \vec{n}=:(y_1^{(j)},y_2^{(j)},y_3^{(j)},\ldots)^\top,
\end{equation}
where the matrix vector product is evaluated over $\mathbb{F}_2$. Now set
\begin{equation*}
x_n^{(j)} =  \frac{y_1^{(j)}}{2}+\frac{y_2^{(j)}}{2^2} + \frac{y_3^{(j)}}{2^3}+\cdots
\end{equation*}
and $$\bsx_n=(x_n^{(1)},\ldots,x_n^{(s)}).$$
The infinite sequence $(\bsx_n)_{n \ge 0}$ is called a {\it digital $(t,s)$-sequence over $\FF_2$  with generating matrices $C_1,\ldots,C_s$}.
\end{definition}

Again we remark that explicit constructions for digital $(t,s)$-sequences are known with some restrictions on the parameter $t$, which depends on $s$.

For general properties of digital nets and sequences we refer to the books \cite{DP10,LP14,niesiam}. Explicit constructions of digital sequences over $\FF_2$ by Sobol, Niederreiter, Niederreiter-Xing and others are known, see again \cite{DP10,LP14,niesiam}.

\section{Quasi-Monte Carlo sampling of mixture product-distributions on intervals}\label{sec:QMCmix}

Assume we are given a ``non-normalized'' mixture distribution $\varphi: [\boldsymbol{a}, \boldsymbol{b}] \to \mathbb{R}_0^+$, where $\mathbb{R}_0^+ := \{x \in \mathbb{R}: x \ge 0\}$ and where $\boldsymbol{a} = (a_1, \ldots, a_s)$, $\boldsymbol{b} = (b_1, \ldots, b_s)$, $-\infty < a_j < b_j < \infty$, which can be expressed as a linear combination of products of one-dimensional PDFs $\varphi_{j, k_j}: [a_j, b_j] \to \mathbb{R}_{ 0}^+$ of the form\footnote{It would be more natural to consider a mixture distribution of the form $\sum_{\ell=1}^M c_\ell \prod_{j=1}^s \varphi_{j,\ell}(x_j)$. The results in this section apply for such mixtures in an analoguous manner. However, later on we consider mixture distributions which are hat function approximations based on weighted grids. In these instances, the notation used in this section is the natural way of writing it, and the current formulation in this section makes it easier to refer back to this section.}
\begin{equation}\label{tensform}
\varphi(\bsx) = \sum_{k_1=0}^{n_1} \ldots \sum_{k_s=0}^{n_s} c_{(k_1,\ldots,k_s)} \prod_{j=1}^s \varphi_{j, k_j}(x_j),
\end{equation}
where $n_1,\ldots,n_s \in \NN$, $c_{(k_1,\ldots, c_s)} \ge 0$ and
\begin{equation}\label{sum_cond_c}
\sum_{k_1=0}^{n_1} \ldots \sum_{k_s=0}^{n_s} c_{(k_1,\ldots,k_s)} = c > 0.
\end{equation}
This leads to a normalized PDF $\overline{\varphi} = \varphi/c$.

Let $\Phi_{j,k_j}$ be the cumulative distribution function (CDF) corresponding to $\varphi_{j,k_j}$ given by $$\Phi_{j,k_j}(x)=\int_{a_j}^x \varphi_{j,k_j}(y) \rd y.$$ We assume that the $\Phi_{j,k_j}: [a_j, b_j] \rightarrow [0, 1]$ are invertible and denote its inverse by   $\Phi_{j,k_j}^{-1}:[0, 1] \rightarrow [a_j, b_j]$. Let further
$\Phi_{(k_1,\ldots,k_s)}^{-1} = (\Phi_{1,k_1}^{-1},\ldots, \Phi_{s,k_s}^{-1})$ be defined as
$\Phi_{(k_1,\ldots,k_s)}^{-1}(\bsx) = (\Phi_{1,k_1}^{-1}(x_1),\ldots, \Phi_{s,k_s}^{-1}(x_s))$ for $\bsx = (x_1, \ldots, x_s) \in [0, 1]^s$. Note that as with $\Phi_{j,k_j}$, $\Phi_{j,k_j}^{-1}$ is also monotone and therefore also Borel-measurable.

For $n_1,\ldots,n_s \in \mathbb{N}$ we define the set of multi-indices $$\cN=\cN_{(n_1,\ldots,n_s)}:= \bigotimes_{j=1}^s \{0,1,\ldots,n_j\}.$$ For a multi-index $\bsk=(k_1,\ldots,k_s)$ in $\cN$, we write $c_{\bsk}=c_{(k_1,\ldots,k_s)}$ and $\varphi_{\bsk}(\bsx)=\prod_{j=1}^s \varphi_{j, k_j}(x_j)$. Since $c_{\bsk}/c \ge 0$ and $\sum_{\bsk \in \cN} (c_{\bsk}/c) = 1$, we can interpret the $c_{\bsk}/c$ as the probability that a random sample with law $\varphi$ comes from a sample with law $\varphi_{\bsk}$.

Now we construct our algorithm for $\overline{\varphi}$-weighted integration over $[\bsa,\bsb]$. Let $N \in \NN$, $N \ge 2$ be the total QMC sample size. Choose an ordering $\bsk_1, \bsk_2, \bsk_3, \ldots $ of the elements in $\cN$ such that $c_{\bsk_1} \ge c_{\bsk_2} \ge c_{\bsk_3} \ge \dots$, where for the case $c_{\bsk_i} = c_{\bsk_j}$ the ordering is arbitrary. Then for some threshold $\delta \in (0,N)$ let $r=r(\delta,N)$ be the smallest number such that 
\begin{equation}\label{def:r}
\sum_{v=1}^r \frac{c_{\bsk_v}}{c}  \ge 1-\frac{\delta}{N}
\end{equation}
and define the associated index subset
\begin{equation*}
\cL_\delta = \cL_\delta(N) = \left\{\bsk_1, \bsk_2, \ldots, \bsk_r\right\} \subseteq \cN.
\end{equation*}
Because of \eqref{sum_cond_c} it is clear that $r \le |\cN|=\prod_{j=1}^s(n_j+1)$.

Let $\cL_\delta^*=\cL_\delta\setminus\{\bsk_r\}$. Now, for $\bsk \in \cL_\delta^*$ define $N_{\bsk}$ as the largest integer that is less than or equal to $N c_{\bsk}/c$, i.e., $N_{\bsk}=\lfloor N c_{\bsk}/c \rfloor$. Note that $$\sum_{\bsk\in \cL_\delta^*} N_{\bsk}  =\sum_{\bsk \in \cL_\delta^*}\left\lfloor\frac{N c_{\bsk}}{c}\right\rfloor \le \frac{N}{c}  \sum_{\bsk \in \cL_\delta^*} c_{\bsk} \le N.$$ Let further  $$N_{\bsk_r}=N-\sum_{\bsk \in \cL_\delta^*} N_{\bsk}.$$
Then the following Diophantine approximation properties are satisfied:
\begin{itemize}
    \item $\sum_{\bsk \in \cL_\delta}N_{\bsk}=N$,
    \item for $\bsk \in \cL_{\delta}^{\ast}$ we have $$\left|\frac{c_{\bsk}}{c} - \frac{N_{\bsk}}{N}\right| \le \frac{1}{N},$$
    \item for $\bsk_r$ we have
    \begin{eqnarray*}
    \left|\frac{ c_{\bsk_r}}{c} -\frac{N_{\bsk_r}}{N}\right| & = & \left|\sum_{v=1}^r \frac{ c_{\bsk_v}}{c} - \sum_{v=1}^{r-1} \frac{ c_{\bsk_v}}{c} -\frac{1}{N}\left(N- \sum_{v=1}^{r-1} N_{\bsk_v} \right)\right|    \\
    & = & \left|\sum_{v=1}^r \frac{ c_{\bsk_v}}{c} -1 +\sum_{v=1}^{r-1}\left(\frac{N_{\bsk_v}}{N}-\frac{c_{\bsk_v}}{c}\right)  \right|\\
    & \le & 1 - \sum_{v=1}^r \frac{ c_{\bsk_v}}{c} +\frac{r-1}{N}\\
    & \le & \frac{\delta}{N} + \frac{r-1}{N}, 
    \end{eqnarray*}
    \item and, finally, 
    \begin{equation}\label{est:dioph3}
    \sum_{\bsk \in \cL_\delta} \left|\frac{c_{\bsk}}{c} - \frac{N_{\bsk}}{N}\right| \le  \frac{\delta+2(r-1)}{N}.
    \end{equation}
\end{itemize}

Let $(\bsy_n)_{n \ge 0}$ be a digital $(t,s)$-sequence over $\FF_2$ and let $N \in \NN$. Now we approximate the integral $$\int_{[\bsa,\bsb]} f(\bsx) \overline{\varphi}(\bsx) \rd \bsx$$ by the algorithm 
\begin{equation}\label{alg_A}
\mathcal{A}_{N,s}(f)= \frac{1}{c} \sum_{\bsk \in \cL_{\delta}} \frac{c_{\bsk}}{N_{\bsk}} \sum_{n=0}^{N_{\bsk}-1}f(\Phi_{\bsk}^{-1}(\bsy_n)).
\end{equation}
This is our algorithm for the integration over $[\bsa,\bsb]$ with respect to mixture distributions of the form \eqref{tensform}. 

Before we state the error estimate we introduce a shorthand notation that allows to formulate the error bound more conveniently. For an interval $[\bsa,\bsb]$, weights $\bsgamma=\{\gamma_{\uu}\}_{\uu \subseteq [s]}$ and $q \ge 1$, we set
\begin{equation}\label{def:Gab}
G_{\bsgamma,q,\bsa,\bsb}(N):=  \left(\sum_{\emptyset \neq \uu \subseteq [s]}  \left(\gamma_\uu \,   \prod_{j \in \uu} (3 (b_j - a_j) \log N)\right)^q \right)^{1/q}.
\end{equation}
Note that for fixed dimension $s$ the quantity $G_{\bsgamma,q,\bsa,\bsb}(N)$ is of order of magnitude $\mathcal{O}((\log N)^s)$ for $N$ growing to infinity. If the weights $\gamma_{\uu}$ are small enough, then we can get rid of the unfavorable dependence on the dimension $s$. For example, in case of product weights $\gamma_\uu = \prod_{j \in \uu} \gamma_j$, where $(\gamma_j)_{j \ge 1}$ is a sequence of positive reals, we have $$G_{\bsgamma,q,\bsa,\bsb}(N)=\left(-1+\prod_{j=1}^s\left(1+(\gamma_j 3 (b_j-a_j) \log N)^q\right)\right)^{1/q}.$$
Now, if $\sum_{j=1}^{\infty} (\gamma_j (b_j-a_j))^q < \infty$, then for arbitrary small $\rho>0$ there exists a $C(\rho)>0$, which is independent of the dimension $s$, such that $$G_{\bsgamma,q,\bsa,\bsb}(N) \le C(\rho) N^{\rho} \quad \mbox{for all $N \in \mathbb{N}$.}$$ This follows, for example, in the same way as \cite[Proof of Corollary~5.45]{DP10}.

Now we can state our first basic result.

\begin{theorem}\label{thm:main2}
Let $\overline{\varphi}$ be the normalized PDF of a mixture distribution like in \eqref{tensform}. Let $1\le p,q\le \infty$ be such that $1/p+1/q=1$. Let $N \in \NN$, let $\delta \in (0,N)$ and let $\mathcal{A}_{N,s}$ be the algorithm from \eqref{alg_A} based on a digital $(t,s)$-sequence $(\bsy_n)_{n \ge 0}$ over $\FF_2$ with non-singular upper triangular generating matrices $C_1,\ldots,C_s$.  Then for $f$ defined on $[\bsa,\bsb]$ with $\|f\|_{p,s,\bsgamma}<\infty$ we have
$$
\left|\int_{[\bsa,\bsb]} f(\bsx) \overline{\varphi}(\bsx) \rd \bsx - \mathcal{A}_{N,s}(f)\right| \le \|f\|_{p,s,\bsgamma} \frac{\delta+3 r-2}{N} \, \left[1 +  5\cdot 2^t \, \log (2N)  G_{\bsgamma,q,\bsa,\bsb}(N)\right],
$$
where $r=r(\delta,N)$ is given by \eqref{def:r} and depends on $\delta$ and $N$ (but $r$ is bounded from above by the number of mixture terms of $\overline{\varphi}$).
\end{theorem}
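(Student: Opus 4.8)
The plan is to reduce the mixture integration error to the already-available single-distribution QMC error bound from \cite{DP2020}, applied separately to each mixture component $\bsk \in \cL_\delta$, and then control two sources of error: the truncation error from dropping the components in $\cN \setminus \cL_\delta$, and the discretization error coming from replacing the exact weights $c_{\bsk}/c$ by the rational approximations $N_{\bsk}/N$. I would begin by writing the exact integral as a weighted sum over all of $\cN$,
\begin{equation*}
\int_{[\bsa,\bsb]} f(\bsx)\,\overline{\varphi}(\bsx)\rd\bsx = \frac{1}{c}\sum_{\bsk \in \cN} c_{\bsk} \int_{[\bsa,\bsb]} f(\bsx)\,\overline{\varphi}_{\bsk}(\bsx)\rd\bsx,
\end{equation*}
where $\overline{\varphi}_{\bsk}$ is the normalized product density associated to $\varphi_{\bsk}$. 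Writing $I_{\bsk}(f)$ for the single-component integral and $Q_{\bsk}(f) = \frac{1}{N_{\bsk}}\sum_{n=0}^{N_{\bsk}-1} f(\Phi_{\bsk}^{-1}(\bsy_n))$ for the corresponding QMC approximation, the algorithm is $\mathcal{A}_{N,s}(f) = \frac{1}{c}\sum_{\bsk\in\cL_\delta} c_{\bsk}\, Q_{\bsk}(f)$. I would then split the total error by adding and subtracting $\frac{1}{c}\sum_{\bsk\in\cL_\delta} c_{\bsk}\, I_{\bsk}(f)$ to separate the quadrature error on the retained components from the weight-mismatch and truncation contributions.

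The first term is $\frac{1}{c}\sum_{\bsk\in\cL_\delta} c_{\bsk}\,(I_{\bsk}(f) - Q_{\bsk}(f))$; here I would invoke the main theorem of \cite{DP2020} for each fixed $\bsk$, which bounds $|I_{\bsk}(f)-Q_{\bsk}(f)|$ in terms of $\|f\|_{p,s,\bsgamma}$ and a factor of the shape $5\cdot 2^t\,\log(2N_{\bsk})\,G_{\bsgamma,q,\bsa,\bsb}(N_{\bsk})/N_{\bsk}$, using precisely the non-singular upper triangular structure of the generating matrices that the hypothesis supplies. The subtlety is that the number of points used for component $\bsk$ is $N_{\bsk}$, not $N$, so the per-component error carries a $1/N_{\bsk}$; after multiplying by $c_{\bsk}/c$ and summing, I would use the Diophantine relation $N_{\bsk}/N \approx c_{\bsk}/c$ (so that $c_{\bsk}/(c\,N_{\bsk}) \approx 1/N$) together with the monotonicity of $\log$ and of $G_{\bsgamma,q,\bsa,\bsb}$ to replace each $N_{\bsk}$ by $N$ in the logarithmic and $G$ factors, collapsing the sum over $\bsk\in\cL_\delta$ into a single clean $1/N$ term. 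The truncation-plus-mismatch contribution is controlled entirely by the Diophantine estimates derived in the text: using $\|f\|_{L_\infty}\le \gamma_\emptyset\|f\|_{p,s,\bsgamma}$ to bound each $|I_{\bsk}(f)|$ and $|Q_{\bsk}(f)|$ pointwise, the weight error $\frac{1}{c}\sum_{\bsk\in\cL_\delta}|c_{\bsk} - (c N_{\bsk}/N)|\,|Q_{\bsk}(f)|$ is handled by \eqref{est:dioph3}, and the mass omitted by truncation, $\frac{1}{c}\sum_{\bsk\notin\cL_\delta} c_{\bsk} \le \delta/N$ by the definition \eqref{def:r} of $r$, bounds the tail.

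Assembling these pieces, the truncation term contributes on the order of $\delta/N$, the weight-mismatch term on the order of $(\delta+2(r-1))/N$, and the quadrature term contributes the $1/N$ factor times the bracketed logarithmic expression; combining the arithmetic of these constants produces the prefactor $(\delta+3r-2)/N$ and the bracket $[1 + 5\cdot 2^t\log(2N)\,G_{\bsgamma,q,\bsa,\bsb}(N)]$ exactly as stated. I expect the main obstacle to be the bookkeeping in the second paragraph: one must verify that replacing $N_{\bsk}$ by $N$ inside $\log$ and $G$ is legitimate and that the accumulated constants aggregate precisely to $\delta+3r-2$ rather than a larger multiple. This requires care because the per-component bound scales with $1/N_{\bsk}$ while the Diophantine control only gives $N_{\bsk}\le N c_{\bsk}/c$ with equality failing for $\bsk_r$; the potentially small value of $N_{\bsk_r}$ is the delicate case, but since $c_{\bsk_r}/(c\,N_{\bsk_r})$ still telescopes correctly against the total $N$, the uniform $1/N$ structure survives. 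The remaining steps—bounding sup-norms via the weighted norm and summing geometric-type tails—are routine.
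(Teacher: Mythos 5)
Your proposal is correct and follows essentially the same route as the paper's proof: split off the truncated components (total mass at most $\delta/N$ by \eqref{def:r}), apply the product-density bound of Proposition~\ref{thmDP1} to each retained component with its $N_{\bsk}$ points, use $N_{\bsk}\le N$ to replace $\log(2N_{\bsk})\,G_{\bsgamma,q,\bsa,\bsb}(N_{\bsk})$ by its value at $N$, and then control $\sum_{\bsk\in\cL_\delta} c_{\bsk}/N_{\bsk}$ via the Diophantine estimate \eqref{est:dioph3}, which is precisely how the factor $(\delta+3r-2)/N$ arises. The bookkeeping you flagged resolves exactly as you anticipate (the paper writes $c_{\bsk}/N_{\bsk}\le (c_{\bsk}/N_{\bsk}-c/N)+c/N$ and sums, so the delicate component $\bsk_r$ causes no loss), and your side remark about a separate weight-mismatch term applied to $|Q_{\bsk}(f)|$ is redundant under your stated decomposition but harmless, since the algorithm uses the exact weights $c_{\bsk}/c$.
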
 

\begin{remark}\rm
For fixed $N \in \NN$ the parameter $\delta \in (0,N)$ is a free parameter that should be chosen such that $\delta+3 r$ is small. For example, let $A:= \min \frac{c_{\bsk}}{c}$ where the minimum is extended over all $\bsk \in \cN_{(n_1,\ldots,n_s)}$. Then choose $$\delta = \frac{3N}{3+AN}.$$ With this choice \eqref{def:r} is satisfied with $$r= \left\lceil \frac{ N}{3+AN} \right\rceil $$ and then $$\delta+3 r  =   \frac{3N}{3+AN}  + 3 \left\lceil \frac{N}{3+AN} \right\rceil.$$ However, if $A$ is very small this might be prohibitive.
\end{remark}

For the proof of Theorem~\ref{thm:main2} we need the following result for densities of product form  which is a direct consequence of \cite[Theorem~3]{DP2020} (we replace the worst-case error with the integration error and multiply the bound by the norm of the integrand and we use $t = \max_{\emptyset \not=\uu \subseteq [s]} t_{\uu}$).

\begin{proposition}\label{thmDP1}
Let the density $\overline\varphi$ be of product form, i.e., $\overline{\varphi}(\bsx)=\prod_{j=1}^s \overline{\varphi}_j(x_j)$ with PDFs $\overline{\varphi}_j:[a_j,b_j]\rightarrow \RR$ for $j \in [s]$. Let $N \in \NN$ and let $\cP'_N$ be the initial segment of a digital $(t,s)$-sequence over $\FF_2$ consisting of the first $N$ terms. Assume that the generating matrices $C_1,\ldots,C_s$ are non-singular upper triangular matrices. Let $\cP_N=\{\Phi^{-1}(\bsy)\, : \, \bsy \in \cP'_N\}$, where $\Phi^{-1}$ is the inverse CDF corresponding to $\overline{\varphi}$. Let $1\le p,q\le \infty$ be such that $1/p+1/q=1$. Then for any $N \in \mathbb{N}$ with $N\ge 2$ we have
$$
\left|\int_{[\bsa,\bsb]} f(\bsx) \overline{\varphi}(\bsx) \rd \bsx - \frac{1}{N} \sum_{\bsx \in \cP_N} f(\bsx) \right|
\le \|f\|_{p,s,\bsgamma}  \frac{5\cdot 2^t\, \log (2N)}{N}   G_{\bsgamma,q,\bsa,\bsb}(N).
$$
\end{proposition}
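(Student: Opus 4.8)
The plan is to obtain this statement directly from \cite[Theorem~3]{DP2020}, which treats exactly the product-form situation and the coordinate-wise inversion point set $\cP_N=\{\Phi^{-1}(\bsy):\bsy\in\cP'_N\}$ appearing here. That theorem is phrased in terms of the \emph{worst-case error}
$$
e_{N,q,s,\bsgamma}(\cP_N):=\sup_{\|f\|_{p,s,\bsgamma}\le 1}\left|\int_{[\bsa,\bsb]} f(\bsx)\,\overline{\varphi}(\bsx)\rd\bsx-\frac{1}{N}\sum_{\bsx\in\cP_N} f(\bsx)\right|,
$$
and bounds it by a quantity in which the single global quality parameter $t$ of our statement is replaced by individual projection quality parameters $t_{\uu}$, $\emptyset\neq\uu\subseteq[s]$. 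So the two tasks are: (i) pass from the worst-case error to the error for a fixed integrand $f$, and (ii) collapse the projection-wise parameters $t_{\uu}$ into the single $t$ used here.

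For task (i) I would invoke the definition of $e_{N,q,s,\bsgamma}(\cP_N)$ directly: since the functional $f\mapsto \int_{[\bsa,\bsb]} f\,\overline{\varphi}\rd\bsx-\frac1N\sum_{\bsx\in\cP_N}f(\bsx)$ is linear, for any $f$ with $\|f\|_{p,s,\bsgamma}<\infty$ its absolute value is at most $\|f\|_{p,s,\bsgamma}\,e_{N,q,s,\bsgamma}(\cP_N)$. This is the standard normalization step and uses nothing beyond the homogeneity of the norm \eqref{def:norm}. The underlying change of variables $\bsx=\Phi^{-1}(\bsy)$, which turns the $\overline{\varphi}$-weighted integral over $[\bsa,\bsb]$ into the uniform integral of $f\circ\Phi^{-1}$ over $[0,1]^s$ and is what makes the transformed digital sequence applicable, is already carried out inside \cite[Theorem~3]{DP2020}; here it only needs to be cited.

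For task (ii) I would use the bound from \cite[Theorem~3]{DP2020} in the form
$$
e_{N,q,s,\bsgamma}(\cP_N)\le \frac{5\,\log(2N)}{N}\left(\sum_{\emptyset\neq\uu\subseteq[s]}\left(\gamma_\uu\,2^{t_{\uu}}\prod_{j\in\uu}\bigl(3(b_j-a_j)\log N\bigr)\right)^{q}\right)^{1/q},
$$
and then set $t:=\max_{\emptyset\neq\uu\subseteq[s]}t_{\uu}$. Since $2^{t_{\uu}}\le 2^{t}$ for every $\uu$, the factor $2^{t}$ can be pulled out of the $\ell_q$-sum, and what remains is precisely $G_{\bsgamma,q,\bsa,\bsb}(N)$ as defined in \eqref{def:Gab}. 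Combining this with task (i) yields the claimed bound with leading constant $5\cdot 2^{t}$.

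The only real obstacle is bookkeeping: one has to match the precise form of \cite[Theorem~3]{DP2020} — in particular, that the quality parameters enter multiplicatively as $2^{t_{\uu}}$ inside each summand, that the per-coordinate factor is exactly $3(b_j-a_j)\log N$, and that the leading constant and the $\log(2N)$ term are as stated — so that after factoring out $2^{t}$ the residual sum is literally $G_{\bsgamma,q,\bsa,\bsb}(N)$. Once this matching is checked, no further analysis is required.
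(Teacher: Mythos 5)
Your proposal is correct and matches the paper's own derivation exactly: the paper presents Proposition~\ref{thmDP1} as a direct consequence of \cite[Theorem~3]{DP2020}, obtained precisely by your two steps --- multiplying the worst-case error bound by $\|f\|_{p,s,\bsgamma}$ (your task (i)) and replacing the projection-wise quality parameters via $t=\max_{\emptyset\neq\uu\subseteq[s]}t_{\uu}$ so that $2^{t}$ factors out and the remaining sum is $G_{\bsgamma,q,\bsa,\bsb}(N)$ (your task (ii)). Your closing caveat about matching the exact form of the cited bound is the only substantive content of the paper's argument as well, so nothing is missing.
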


Now we give the proof of Theorem~\ref{thm:main2}.

\begin{proof}[Proof of Theorem~\ref{thm:main2}]
Let $f$ be a function on $[\bsa,\bsb]$ with $\|f\|_{p,s,\bsgamma}< \infty$. Then we have
\begin{eqnarray*}
c \lefteqn{\left| \int_{[\bsa, \bsb]} f(\bsx) \overline{\varphi}(\bsx) \,\mathrm{d} \bsx - \mathcal{A}_{N,s}(f) \right|}\\
& = & \left|\sum_{\bsk \in \cN} c_{\bsk} \int_{[\bsa, \bsb]} f(\bsx) \varphi_{\bsk}(\bsx) \,\mathrm{d} \bsx - \sum_{\bsk \in \cL_\delta} \frac{c_{\bsk}}{N_{\bsk}} \sum_{n=0}^{N_{\bsk}-1}f(\Phi_{\bsk}^{-1}(\bsy_n)) \right| \\
& = & \left|\sum_{\bsk \in \cN\setminus \cL_\delta} c_{\bsk} \int_{[\bsa, \bsb]} f(\bsx) \varphi_{\bsk}(\bsx) \,\mathrm{d} \bsx + \sum_{\bsk \in \cL_\delta} c_{\bsk} \int_{[\bsa, \bsb]} f(\bsx) \varphi_{\bsk}(\bsx) \,\mathrm{d} \bsx - \sum_{\bsk \in \cL_\delta} \frac{c_{\bsk}}{N_{\bsk}} \sum_{n=0}^{N_{\bsk}-1}f(\Phi_{\bsk}^{-1}(\bsy_n)) \right| \\
& \le & \sum_{\bsk \in \cN\setminus \cL_\delta} c_{\bsk} \left| \int_{[\bsa, \bsb]} f(\bsx) \varphi_{\bsk}(\bsx) \,\mathrm{d} \bsx\right| + \sum_{\bsk \in \cL_\delta} c_{\bsk} \left|  \int_{[\bsa, \bsb]} f(\bsx) \varphi_{\bsk}(\bsx) \,\mathrm{d} \bsx - \frac{1}{N_{\bsk}} \sum_{n=0}^{N_{\bsk}-1}f(\Phi_{\bsk}^{-1}(\bsy_n)) \right| \\
& \le &  \|f\|_{p,s,\bsgamma} \left[\sum_{\bsk \in \cN\setminus \cL_\delta } c_{\bsk} + \sum_{\bsk \in \cL_\delta }  \frac{c_{\bsk}}{N_{\bsk}} 5\cdot 2^t\, \log (2N_{\bsk})   G_{\bsgamma,q,\bsa,\bsb}(N_{\bsk})  \right],
\end{eqnarray*}
where we used Proposition~\ref{thmDP1} and the fact that $$\sup_{\bsx \in [\bsa,\bsb]}|f(\bsx)| \le \|f\|_{p,s,\bsgamma}$$ and therefore also  $$\left| \int_{[\bsa, \bsb]} f(\bsx) \varphi_{\bsk}(\bsx) \,\mathrm{d} \bsx\right| \le \|f\|_{p,s,\bsgamma} \int_{[\bsa, \bsb]}  \varphi_{\bsk}(\bsx) \,\mathrm{d} \bsx =  \|f\|_{p,s,\bsgamma}.$$ 
Using $N_{\bsk}\le N$ we obtain
\begin{eqnarray*}
c\lefteqn{\left| \int_{[\bsa, \bsb]} f(\bsx) \overline{\varphi}(\bsx) \,\mathrm{d} \bsx - \mathcal{A}_{N,s}(f) \right|}\\
& \le & \|f\|_{p,s,\bsgamma} \left[\sum_{\bsk \in \cN\setminus \cL_\delta} c_{\bsk} +  5 \cdot 2^t\,\log (2 N) \, G_{\bsgamma,q,\bsa,\bsb}(N)  \sum_{\bsk \in \cL_\delta }  \frac{c_{\bsk}}{N_{\bsk}} \right].
\end{eqnarray*}
We have 
$$
\sum_{\bsk \in \cN\setminus \cL_\delta} c_{\bsk} \le  \frac{c \delta}{N}
$$
and, using \eqref{est:dioph3}, %
\begin{eqnarray*}
\sum_{\bsk \in \cL_\delta}  \frac{c_{\bsk}}{N_{\bsk}}  & \le &\sum_{\bsk \in \cL_\delta}  \left(\frac{c_{\bsk}}{N_{\bsk}}-\frac{c}{N}\right) + \frac{c\, |\cL_\delta|}{N}  \\
& = & \sum_{\bsk \in \cL_\delta}  \frac{c}{N_{\bsk}} \left(\frac{c_{\bsk}}{c}-\frac{N_{\bsk}}{N}\right) + \frac{c\, r}{N} \\
& \le & c\, \frac{\delta+2(r-1)}{N}  + \frac{c\, r}{N}\\
& \le & c\, \frac{\delta+3 r-2}{N}.
\end{eqnarray*}
Therefore we obtain
\begin{eqnarray*}
\lefteqn{\left| \int_{[\bsa, \bsb]} f(\bsx) \overline{\varphi}(\bsx) \,\mathrm{d} \bsx - \mathcal{A}_{N,s}(f) \right|}\\
& \le & \|f\|_{p,s,\bsgamma} \left[\frac{\delta}{N} +  5 \cdot 2^t\, \log(2 N)\,  G_{\bsgamma,q,\bsa,\bsb}(N)  \frac{\delta+3 r-2}{N}  \right]\\
& \le &  \|f\|_{p,s,\bsgamma} \frac{\delta+3 r-2}{N} \, \left[1 +  5 \cdot 2^t \, \log(2  N) \,  G_{\bsgamma,q,\bsa,\bsb}(N)\right],
\end{eqnarray*}
as desired.
\end{proof}

\section{Quasi-Monte Carlo sampling of hat function approximations of bounded target densities defined on intervals}\label{sec4:QMChat}

In this section we use hat function approximations of a given target density defined on an interval. The hat function approximation can be viewed as a mixture distribution as discussed in the previous section, where each PDF is a hat function. We consider also adaptive hat function approximations in this section.

The results on hat function approximations are well understood, we include them here for completeness and to fix the notation.

\subsection{Piecewise linear hat function approximation on intervals}\label{subsec:hatfkt}

Let $m \in \mathbb{N}$ and 
\begin{equation}\label{def:yl}
y_\ell := a+ \ell \, \frac{b-a}{m} \quad \mbox{for $\ell \in \{0, 1, \ldots, m\}$.}
\end{equation}
Obviously, $y_0=a$ and $y_m=b$. For $\ell \in \{0, 1,\ldots, m\}$ define the piecewise linear hat function $h_{\ell, m}: [a, b] \to \mathbb{R}$ to be $1$ at $y_\ell$, to be $0$ at $y_{\ell-1}$ and $y_{\ell+1}$ and also $0$ outside the interval $[y_{\ell-1}, y_{\ell+1}]$, and linear in between, that is, define
$$h_{0,m}(x):=\left\{
\begin{array}{ll}
(y_{1}-x) \frac{ m}{b-a} & \mbox{ if } y_0 \le x \le y_{1},\\[1em]
0 & \mbox{ otherwise},
\end{array}\right.$$
for $\ell \in \{1,\ldots,\ell-1\}$ define 
$$h_{\ell,m}(x):=\left\{
\begin{array}{ll}
(x-y_{\ell-1}) \frac{ m}{b-a} & \mbox{ if } y_{\ell-1} \le x \le y_\ell,\\[1em]
(y_{\ell+1}-x) \frac{ m}{b-a} & \mbox{ if } y_{\ell} \le x \le y_{\ell+1},\\[1em]
0 & \mbox{ otherwise},
\end{array}\right.$$
and
$$h_{m,m}(x):=\left\{
\begin{array}{ll}
(x-y_{m-1}) \frac{m}{b-a} & \mbox{ if } y_{m-1} \le x \le y_m,\\[1em]
0 & \mbox{ otherwise}.
\end{array}\right.$$
For $x \in [a,b]$ let $k \in \{0,1,\ldots,m-1\}$ be such that $x \in [y_k,y_{k+1})$. Then we have 
\begin{align*}
\sum_{\ell=0}^m h_{\ell,m}(x)= & h_{k,m}(x)+h_{k+1,m}(x)\\ 
= & (y_{k+1}-x) \frac{m}{b-a} +(x-y_k) \frac{m}{b-a} =(y_{k+1}-y_k) \frac{m}{b-a} = 1.
\end{align*}

In dimension $s > 1$ consider $[\bsa,\bsb]$ where $\bsa=(a_1,\ldots,a_s)$ and $\bsb=(b_1,\ldots,b_s)$ are from $\mathbb{R}^s$. For $\bsm=(m_1,\ldots,m_s) \in \mathbb{N}^s$ and $\bsell=(\ell_1,\ldots,\ell_s) \in M_{\bsm} := \bigotimes_{j=1}^s \{0,1,\ldots,m_j\}$ we use $h_{\bsell, \bsm} := \prod_{j=1}^s h_{\ell_j, m_j}$. Note that for $\bsx=(x_1,\ldots,x_s)\in [\bsa,\bsb]$ we have
\begin{eqnarray*}
\sum_{\bsell \in M_{\bsm}} h_{\bsell,\bsm}(\bsx) =  \prod_{j=1}^s \left(\sum_{\ell =0}^{m_j} h_{\ell,m_j}(x_j)\right)=  1. %
\end{eqnarray*}

Consider a general PDF $\overline{\pi}$ and assume we can only evaluate its unnormalized density $\pi$. Then, we first approximate $\pi$ by
\begin{equation*}
\varphi_{\bsm}(\bsx) = \sum_{\bsell \in M_{\bsm}} \pi(\bsy_{\bsell}) h_{\bsell, \bsm}(\bsx) \quad\mbox{for $\bsx \in [\bsa,\bsb]$,}
\end{equation*}
where $\bsm=(m_1,\ldots,m_s)\in \NN^s$ and where $y_{\bsell}$ denotes the vector $(y_{1,\ell_1},\ldots,y_{s,\ell_s})$ (with resolution $m_j$ and with respect to the interval $[a_j,b_j]$ in coordinate $j \in [s]$). The function $\varphi_{\bsm}$ is a piecewise linear approximation of $\pi$. The following lemma gives an error estimate for this kind of approximation. 

\begin{lemma}\label{le:app1hat}
Assume that $\pi$ satisfies a H\"older condition 
\begin{equation}\label{Hcond}
|\pi(\bsx)-\pi(\bsy)| \le L_{\pi} \|\bsx-\bsy\|_{\infty}^{\beta} \quad \mbox{for all $\bsx,\bsy \in [\bsa,\bsb]$,}
\end{equation}
with H\"older constant $L_{\pi}>0$ and for some exponent $\beta \in (0,1]$.
Then for every $\bsx \in [\bsa,\bsb]$ we have
$$|\pi(\bsx) - \varphi_{\bsm}(\bsx) | \le L_{\pi}\max_{j \in [s]} \left(\frac{b_j-a_j}{m_j}\right)^{\beta}.$$
\end{lemma}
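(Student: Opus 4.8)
The plan is to exploit the partition-of-unity property of the hat functions established just above the statement, namely $\sum_{\bsell \in M_{\bsm}} h_{\bsell,\bsm}(\bsx) = 1$, together with the fact that each $h_{\bsell,\bsm}$ is non-negative. Since the constant function $1$ can be written as this sum, I would first rewrite $\pi(\bsx) = \sum_{\bsell \in M_{\bsm}} \pi(\bsx) h_{\bsell,\bsm}(\bsx)$ and subtract the definition $\varphi_{\bsm}(\bsx) = \sum_{\bsell \in M_{\bsm}} \pi(\bsy_{\bsell}) h_{\bsell,\bsm}(\bsx)$. This expresses the error as $\pi(\bsx) - \varphi_{\bsm}(\bsx) = \sum_{\bsell \in M_{\bsm}} (\pi(\bsx) - \pi(\bsy_{\bsell})) h_{\bsell,\bsm}(\bsx)$, so that taking absolute values and using non-negativity of the $h_{\bsell,\bsm}$ yields $|\pi(\bsx) - \varphi_{\bsm}(\bsx)| \le \sum_{\bsell} |\pi(\bsx) - \pi(\bsy_{\bsell})| \, h_{\bsell,\bsm}(\bsx)$.

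The crucial observation — and the only step requiring any care — is a localization argument. A summand is nonzero only when $h_{\bsell,\bsm}(\bsx) \neq 0$, which by the product structure $h_{\bsell,\bsm} = \prod_{j=1}^s h_{\ell_j,m_j}$ forces $x_j$ to lie in the support $[y_{j,\ell_j - 1}, y_{j,\ell_j + 1}]$ of $h_{\ell_j,m_j}$ for every coordinate $j \in [s]$. Since consecutive grid points are spaced $(b_j-a_j)/m_j$ apart, membership in this support gives $|x_j - y_{j,\ell_j}| \le (b_j-a_j)/m_j$ for each such $j$, and hence $\|\bsx - \bsy_{\bsell}\|_\infty \le \max_{j\in[s]} (b_j-a_j)/m_j$ for every index $\bsell$ that actually contributes to the sum.

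Given this localization, I would apply the H\"older assumption \eqref{Hcond} to each contributing term, obtaining the uniform bound $|\pi(\bsx) - \pi(\bsy_{\bsell})| \le L_\pi (\max_{j\in[s]} (b_j-a_j)/m_j)^\beta$, independent of $\bsell$. Factoring this constant out of the sum and invoking the partition of unity a second time to replace $\sum_{\bsell} h_{\bsell,\bsm}(\bsx)$ by $1$ yields exactly the claimed estimate. The argument is essentially self-contained; the main (mild) obstacle is carrying out the support-and-spacing bound cleanly across all coordinates simultaneously, so that the $\sup$-norm distance from $\bsx$ to each active node $\bsy_{\bsell}$ is controlled by the single quantity $\max_{j\in[s]} (b_j-a_j)/m_j$.
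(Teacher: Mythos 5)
Your proposal is correct and follows essentially the same route as the paper's proof: both express the error via the partition of unity as $\sum_{\bsell}(\pi(\bsx)-\pi(\bsy_{\bsell}))h_{\bsell,\bsm}(\bsx)$, bound each contributing term by $L_{\pi}\bigl(\max_{j\in[s]}(b_j-a_j)/m_j\bigr)^{\beta}$ using the H\"older condition together with the fact that $h_{\bsell,\bsm}(\bsx)\neq 0$ forces $\|\bsx-\bsy_{\bsell}\|_{\infty}\le\max_{j}(b_j-a_j)/m_j$, and then resum the hat functions to $1$. The only cosmetic difference is that the paper localizes first to the $2^s$ corners of the cell containing $\bsx$ and verifies the partition of unity explicitly on that cell, whereas you keep the full sum and invoke the global identity $\sum_{\bsell\in M_{\bsm}}h_{\bsell,\bsm}(\bsx)=1$; the substance is identical.
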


\begin{proof}
Suppose that $\bsx$ belongs to the subinterval $\bigotimes_{j=1}^s [y_{j, k_j},y_{j, k_j+1})$ of $[\bsa,\bsb]$. Then the error is
\begin{eqnarray*}
\lefteqn{|\pi(\bsx) - \varphi_{\bsm}(\bsx)|}\\
& = & \left|\sum_{\bsell \in M_{\bsm}} (\pi(\bsx)-\pi(\bsy_{\bsell})) h_{\bsell,\bsm}(\bsx)\right|\\
& = & \left|\sum_{\bsell \in \bigotimes_j \{k_j,k_{j+1}\}} (\pi(\bsx)-\pi(\bsy_{\bsell})) h_{\bsell,\bsm}(\bsx)\right|\\
& = & \left|\sum_{\uu \subseteq [s]} (\pi(\bsx)-\pi((y_{j,k_j})_{\uu},(y_{j,k_j+1})_{\uu^c})) \prod_{j\in \uu} \frac{(y_{j,k_j+1}-x_j)m_j}{b_j-a_j} \prod_{j \not\in \uu} \frac{(x_j-y_{j,k_j})m_j}{b_j-a_j} \right|\\
& \le & L_{\pi}\max_{j \in [s]} \left(\frac{b_j-a_j}{m_j}\right)^{\beta} \sum_{\uu \subseteq [s]}\prod_{j\in \uu} \frac{(y_{j, k_j+1}-x_j)m_j}{b_j-a_j} \prod_{j \not\in \uu} \frac{ (x_j-y_{j, k_j})m_j}{b_j-a_j} \\
& = & L_{\pi}\max_{j \in [s]} \left(\frac{b_j-a_j}{m_j}\right)^{\beta} \prod_{j=1}^s \frac{(y_{j, k_j+1}-y_{j, k_j}) m_j}{b_j-a_j}\\
& = & L_{\pi}\max_{j \in [s]} \left(\frac{b_j-a_j}{m_j}\right)^{\beta},
\end{eqnarray*}
where we used the H\"older condition \eqref{Hcond}.
\end{proof}

Let now $\pi: [\bsa, \bsb] \to \mathbb{R}$ be a bounded target density with
\begin{equation*}
\|\pi \|_{\infty} := \sup_{\bsx \in [\bsa, \bsb] } \left| \pi(\bsx) \right| < \infty.
\end{equation*}
Let the hat function approximation of $\pi$ be given by
\begin{equation*}
\varphi_{\bsm}(\bsx) = \sum_{\ell_1 = 0}^{m_1} \ldots \sum_{\ell_s=0}^{m_s} \pi(y_{1,\ell_1}, \ldots, y_{s,\ell_s}) h_{\bsell, \bsm}(\bsx) \quad \mbox{for $\bsx \in [\bsa,\bsb]$,}
\end{equation*}
where $\bsm=(m_1,\ldots,m_s)\in \NN^s$ and where the $y_{j,\ell_j}$ are like in \eqref{def:yl} with respect to resolution $m_j$ and interval $[a_j,b_j]$ in coordinate direction $j\in [s]$. This approximation guarantees that all the coefficients in the approximation are non-negative since $\pi$ is non-negative by assumption.

We also assume that the target density satisfies the H\"older condition \eqref{Hcond}. Then, according to Lemma~\ref{le:app1hat}, we have
\begin{equation}\label{est_hold1}
|\pi(\bsx) - \varphi_{\bsm}(\bsx)| \le L_{\pi} \max_{j \in [s]} \left(\frac{b_j-a_j}{m_j}\right)^{\beta}\quad \mbox{for all $\bsx \in [\bsa,\bsb]$.}
\end{equation}
Note that $\pi \ge 0$ implies that $\varphi_{\bsm} \ge 0$, however, $\varphi_{\bsm}$ is in general not a probability distribution since we cannot guarantee that $\int_{[\bsa,\bsb]} \varphi_{\bsm}(\bsx) \rd \bsx = 1$. The hat functions are normalized such that the value at the peak is $1$. Corresponding to $h_{\ell, m}$ we define a probability density function $\varphi_{\ell, m}$ as
\begin{equation*}
\varphi_{\ell,m} = \frac{m}{b-a} h_{\ell,m} \quad \mbox{for $m \in \mathbb{N}$ and $\ell \in \{0,1,\ldots,m\}$.}
\end{equation*}
Thus we can write
\begin{equation*}
\varphi_{\bsm}(\bsx) = \sum_{\bsk \in M_{\bsm}} \frac{ \pi(\bsy_{\bsk}) }{\prod_{j=1}^s m_j/(b_j-a_j) } \prod_{j=1}^s \varphi_{k_j,m_j}(x_j),
\end{equation*}
where $\varphi_{k_j, m_j}$ are probability density functions and $\bsx=(x_1,\ldots,x_s)$ in $[\bsa,\bsb]$.

Now $\varphi_{\bsm}$ is of the form \eqref{tensform} with 
\begin{equation}\label{phi:mix}
c_{\bsk}=\left(\prod_{j=1}^s \frac{b_j-a_j}{ m_j }\right) \, \pi(\bsy_{\bsk})\quad \mbox{and}\quad \cN=M_{\bsm}
\end{equation}
and we can use Theorem~\ref{thm:main2}. Equation~\eqref{sum_cond_c} now reads
\begin{equation}\label{defc}
c = \left(\prod_{j=1}^s \frac{b_j-a_j}{ m_j }\right) \sum_{\bsk \in M_{\bsm}}  \pi(\bsy_{\bsk})  > 0.
\end{equation}
Thus, $c$ is an approximation of $z:=\int_{[\bsa,\bsb]} \pi(\bsx) \,\mathrm{d} \bsx$.

Now we consider the normalized probability density $\overline{\pi}:=\pi/z$. The normalized version of the H\"older condition \eqref{Hcond} for $\pi$ is then re-stated in the form
\begin{equation}\label{Hcondno}
|\overline{\pi}(\bsx)-\overline{\pi}(\bsy)| \le L_{\overline{\pi}} \|\bsx-\bsy\|_{\infty}^{\beta} \quad \mbox{for all $\bsx,\bsy \in [\bsa,\bsb]$,}
\end{equation}
for some exponent $\beta \in (0,1]$, where $L_{\overline{\pi}} := L_{\pi}/z$ is the H\"older constant for $\overline{\pi}$.

\begin{lemma}\label{lem_pi_approx}
Let $\overline{\pi}$ be the normalized probability density satisfying the H\"older condition \eqref{Hcondno} and let $\overline{\varphi}_{\bsm}$ be the normalized version of $\varphi_{\bsm}$. Then we have
$$\int_{[\bsa,\bsb]} | \overline{\pi}(\bsx) - \overline{\varphi}_{\bsm}(\bsx)| \rd \bsx \leq 2 L_{\overline\pi}  \prod_{j=1}^s (b_j-a_j) \max_{j \in [s]} \left( \frac{b_j-a_j}{m_j} \right)^\beta.$$
\end{lemma}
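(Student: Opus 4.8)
The plan is to reduce the whole estimate to the pointwise interpolation bound of Lemma~\ref{le:app1hat} plus one application of the triangle inequality, splitting the $L_1$-error into a genuine interpolation error and an error coming from the mismatch of the two normalizing constants. Write $z=\int_{[\bsa,\bsb]}\pi(\bsx)\rd\bsx$ and let $c$ denote the normalizing constant of $\varphi_{\bsm}$, i.e. $c=\int_{[\bsa,\bsb]}\varphi_{\bsm}(\bsx)\rd\bsx$ and $\overline{\varphi}_{\bsm}=\varphi_{\bsm}/c$, so that both $\overline{\pi}=\pi/z$ and $\overline{\varphi}_{\bsm}$ are genuine probability densities. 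I would introduce the intermediate function $\psi_{\bsm}:=\varphi_{\bsm}/z$. Since $\pi(\bsy_{\bsell})/z=\overline{\pi}(\bsy_{\bsell})$, this $\psi_{\bsm}=\sum_{\bsell\in M_{\bsm}}\overline{\pi}(\bsy_{\bsell})h_{\bsell,\bsm}$ is exactly the piecewise-linear hat interpolant of the normalized density $\overline{\pi}$. The triangle inequality then gives
\[
\int_{[\bsa,\bsb]}|\overline{\pi}(\bsx)-\overline{\varphi}_{\bsm}(\bsx)|\rd\bsx \le \int_{[\bsa,\bsb]}|\overline{\pi}(\bsx)-\psi_{\bsm}(\bsx)|\rd\bsx + \int_{[\bsa,\bsb]}|\psi_{\bsm}(\bsx)-\overline{\varphi}_{\bsm}(\bsx)|\rd\bsx,
\]
and I will show that each of the two terms is at most $L_{\overline{\pi}}\prod_{j=1}^s(b_j-a_j)\max_{j\in[s]}((b_j-a_j)/m_j)^\beta$, which produces the factor $2$ in the claimed bound.

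For the first term I would apply Lemma~\ref{le:app1hat} directly to $\overline{\pi}$, which satisfies the H\"older condition \eqref{Hcondno} with constant $L_{\overline{\pi}}$. Because $\psi_{\bsm}$ is precisely the hat interpolant of $\overline{\pi}$, the lemma yields the pointwise estimate $|\overline{\pi}(\bsx)-\psi_{\bsm}(\bsx)|\le L_{\overline{\pi}}\max_{j\in[s]}((b_j-a_j)/m_j)^\beta$ for every $\bsx\in[\bsa,\bsb]$. Integrating this constant bound over $[\bsa,\bsb]$, whose volume equals $\prod_{j=1}^s(b_j-a_j)$, gives exactly the desired half of the estimate.

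For the second term I would exploit that $\psi_{\bsm}$ and $\overline{\varphi}_{\bsm}$ are two scalar multiples of the same nonnegative function $\varphi_{\bsm}$, namely $\psi_{\bsm}=\varphi_{\bsm}/z$ and $\overline{\varphi}_{\bsm}=\varphi_{\bsm}/c$. Hence
\[
\int_{[\bsa,\bsb]}|\psi_{\bsm}(\bsx)-\overline{\varphi}_{\bsm}(\bsx)|\rd\bsx = \left|\frac{1}{z}-\frac{1}{c}\right|\int_{[\bsa,\bsb]}\varphi_{\bsm}(\bsx)\rd\bsx = \frac{|c-z|}{z},
\]
using $\int_{[\bsa,\bsb]}\varphi_{\bsm}=c$. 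To control $|c-z|$ I would apply Lemma~\ref{le:app1hat} to $\pi$ itself (with H\"older constant $L_{\pi}$ from \eqref{Hcond}), which gives $|c-z|=\big|\int_{[\bsa,\bsb]}(\varphi_{\bsm}-\pi)\big|\le\int_{[\bsa,\bsb]}|\varphi_{\bsm}-\pi|\le L_{\pi}\prod_{j=1}^s(b_j-a_j)\max_{j\in[s]}((b_j-a_j)/m_j)^\beta$. Dividing by $z$ and using $L_{\overline{\pi}}=L_{\pi}/z$ recovers the second half of the bound.

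I expect the only point requiring genuine care to be the bookkeeping of the two distinct normalizing constants $z$ and $c$: the key observation is that the $L_1$-error splits cleanly into the honest interpolation error of the \emph{normalized} density (first term) and the error incurred by dividing $\varphi_{\bsm}$ by $c$ rather than by the true mass $z$ (second term), and that both contributions are governed by the same pointwise estimate of Lemma~\ref{le:app1hat}. Apart from recognizing this decomposition there is no substantial obstacle; the proof is a short reduction to Lemma~\ref{le:app1hat} and the triangle inequality.
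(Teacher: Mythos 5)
Your proof is correct and takes essentially the same route as the paper: the paper also splits via the intermediate function $\varphi_{\bsm}/z$ (writing $\overline{\pi}-\overline{\varphi}_{\bsm}$ as $\frac{1}{z}(\pi-\varphi_{\bsm})$ plus $\frac{1}{z}(\varphi_{\bsm}-\frac{z}{c}\varphi_{\bsm})$), reduces the second term to $|c-z|/z \le \frac{1}{z}\int_{[\bsa,\bsb]}|\pi(\bsx)-\varphi_{\bsm}(\bsx)|\rd\bsx$, and invokes Lemma~\ref{le:app1hat} for both contributions, yielding the same factor $2$. Your phrasing of the first term as the interpolation error of the normalized density $\overline{\pi}$ is just a rescaling by $z$ of the paper's bound on $\frac{1}{z}\int_{[\bsa,\bsb]}|\pi(\bsx)-\varphi_{\bsm}(\bsx)|\rd\bsx$, so the two arguments coincide step for step.
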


\begin{proof}

Recall that $\overline{\pi} = \pi/z$ and $\overline{\varphi}_{\bsm} = \varphi_{\bsm} / c$, where $c= \int_{[\bsa, \bsb]} \varphi_{\bsm}(\bsx) \,\mathrm{d} \bsx$. Then, we have 
\begin{align*}
\int_{[\bsa,\bsb]} | \overline{\pi}(\bsx) - \overline{\varphi}_{\bsm}(\bsx)| \rd \bsx \leq & \frac{1}{z} \int_{[\bsa,\bsb]} |\pi(\bsx) - \varphi_{\bsm}(\bsx)| \rd \bsx + \frac{1}{z} \int_{[\bsa,\bsb]} \left| \varphi_{\bsm}(\bsx) - \frac{z}{c} \varphi_{\bsm}(\bsx) \right| \rd \bsx   \\
= &\frac{1}{z} \int_{[\bsa,\bsb]} |\pi(\bsx) - \varphi_{\bsm}(\bsx)| \rd \bsx + \frac{|c - z|}{z} \int_{[\bsa,\bsb]} \frac{\varphi_{\bsm}(\bsx)}{c}  \rd \bsx
\\ \le & \frac2z \int_{[\bsa,\bsb]} | \pi(\bsx) - \varphi_{\bsm}(\bsx)| \rd \bsx \\ 
\le & \frac2z \prod_{j=1}^s (b_j-a_j) L_\pi \max_{j \in [s]} \left( \frac{b_j-a_j}{m_j} \right)^\beta\\ 
= & 2 L_{\overline{\pi}} \prod_{j=1}^s (b_j-a_j)  \max_{j \in [s]} \left( \frac{b_j-a_j}{m_j} \right)^\beta,
\end{align*}
where the second to last line above follows from Lemma~\ref{le:app1hat}.
\end{proof}

\subsection{Quasi-Monte Carlo sampling of the hat function approximation of a bounded target density defined on an interval}

The hat function approximation of a target density defined on an interval can be viewed as a mixture product-density approximation of the target density. Hence we can sample from the hat function approximation using the approach from Section~\ref{sec:QMCmix}.

We now use $\mathcal{A}_{N,s}$ defined in \eqref{alg_A} with $c_{\bsk}$ and $\cN$ like in \eqref{phi:mix}.

\begin{theorem}\label{thm_hat_approx_QMC}
Let $1\le p,q\le \infty$ be such that $1/p+1/q=1$. Assume that the target probability density  $\overline{\pi}:[\bsa,\bsb] \rightarrow \RR_0^+$ is bounded and satisfies the H\"older condition \eqref{Hcondno} with H\"older constant $L_{\overline{\pi}}$ and exponent $\beta \in (0,1]$, and assume that $f:[\bsa,\bsb]\rightarrow \RR$ satisfies $\|f\|_{p,s,\bsgamma}< \infty$, where the latter norm is given by \eqref{def:norm}. Let $\delta>0$ and let $\mathcal{A}_{N,s}$ be given by \eqref{alg_A} with $c_{\bsk}$ and $\cN$ like in \eqref{phi:mix} and based on a digital $(t,s)$-sequence over $\FF_2$ with non-singular upper triangular generating matrices $C_1,\ldots,C_s$.
Then we have
\begin{align*}
\lefteqn{\left|  \int_{[\bsa, \bsb]} f(\bsx) \overline{\pi}(\bsx) \rd \bsx - \mathcal{A}_{N,s}(f) \right|}\\ & \le 2 \|f\|_{L_{\infty}} L_{\overline{\pi}} \prod_{j=1}^s (b_j-a_j) \max_{j \in [s]} \left( \frac{b_j-a_j}{m_j} \right)^\beta  \\ 
&\quad + \|f\|_{p, s, \bsgamma} \frac{\delta+3r-2 }{N} \,  \left[1+ 5 \cdot 2^t \log(2 N) \, G_{\bsgamma,q,\bsa,\bsb}(N)\right],
\end{align*}
where $r=r(\delta,N)$ is given by \eqref{def:r} and $G_{\bsgamma,q,\bsa,\bsb}(N)$ is defined in \eqref{def:Gab}. 
\end{theorem}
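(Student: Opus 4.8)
The plan is to insert the normalized hat function approximation $\overline{\varphi}_{\bsm}$ of $\overline{\pi}$ as an intermediate object and split the error by the triangle inequality into a \emph{density approximation} part and a \emph{quadrature} part:
$$
\left|\int_{[\bsa,\bsb]} f(\bsx)\overline{\pi}(\bsx)\rd\bsx - \mathcal{A}_{N,s}(f)\right| \le \left|\int_{[\bsa,\bsb]} f(\bsx)(\overline{\pi}(\bsx)-\overline{\varphi}_{\bsm}(\bsx))\rd\bsx\right| + \left|\int_{[\bsa,\bsb]} f(\bsx)\overline{\varphi}_{\bsm}(\bsx)\rd\bsx - \mathcal{A}_{N,s}(f)\right|.
$$
The two summands in the claimed bound will come, respectively, from these two pieces.

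First I would bound the density approximation part. Since both $\overline{\pi}$ and $\overline{\varphi}_{\bsm}$ are probability densities on $[\bsa,\bsb]$, I pull out the supremum of $f$ and apply Lemma~\ref{lem_pi_approx}, keeping the $L_\infty$ norm of $f$ explicit rather than converting it to the $\bsgamma$-weighted norm:
$$
\left|\int_{[\bsa,\bsb]} f(\bsx)(\overline{\pi}(\bsx)-\overline{\varphi}_{\bsm}(\bsx))\rd\bsx\right| \le \|f\|_{L_\infty}\int_{[\bsa,\bsb]}|\overline{\pi}(\bsx)-\overline{\varphi}_{\bsm}(\bsx)|\rd\bsx \le 2\|f\|_{L_\infty} L_{\overline{\pi}}\prod_{j=1}^s(b_j-a_j)\max_{j\in[s]}\left(\frac{b_j-a_j}{m_j}\right)^\beta,
$$
which is precisely the first summand.

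For the quadrature part, the key observation is that $\mathcal{A}_{N,s}$ is by construction the algorithm \eqref{alg_A} associated with the mixture distribution $\overline{\varphi}_{\bsm}$. As established in Section~\ref{subsec:hatfkt}, $\varphi_{\bsm}$ is of the form \eqref{tensform} with coefficients $c_{\bsk}$ and index set $\cN=M_{\bsm}$ from \eqref{phi:mix}, and each one-dimensional factor $\varphi_{k_j,m_j}$ is a genuine PDF whose CDF is piecewise linear and hence invertible. Thus Theorem~\ref{thm:main2} applies verbatim with $\overline{\varphi}$ replaced by $\overline{\varphi}_{\bsm}$, yielding
$$
\left|\int_{[\bsa,\bsb]} f(\bsx)\overline{\varphi}_{\bsm}(\bsx)\rd\bsx - \mathcal{A}_{N,s}(f)\right| \le \|f\|_{p,s,\bsgamma}\frac{\delta+3r-2}{N}\left[1+5\cdot2^t\log(2N)G_{\bsgamma,q,\bsa,\bsb}(N)\right],
$$
the second summand. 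Adding the two bounds completes the argument.

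There is essentially no hard analytic obstacle here: the work is conceptual bookkeeping, and the proof reduces to assembling Lemma~\ref{lem_pi_approx} and Theorem~\ref{thm:main2} via the triangle inequality. The one point to verify carefully is that the hypotheses of Theorem~\ref{thm:main2} are genuinely met by $\overline{\varphi}_{\bsm}$ — in particular that the hat-based factors are invertible CDFs, so that the inverse-CDF transforms $\Phi_{\bsk}^{-1}$ appearing in \eqref{alg_A} are well defined — and that the quantity $r=r(\delta,N)$ in the quadrature term is exactly the one from \eqref{def:r} built from the coefficients $c_{\bsk}$ of \eqref{phi:mix}. Since all of this was already set up in Section~\ref{subsec:hatfkt}, only the final combination remains.
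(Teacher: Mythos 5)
Your proposal is correct and follows essentially the same route as the paper's own proof: the paper likewise inserts $\overline{\varphi}_{\bsm}$, splits by the triangle inequality, bounds the density-approximation term via Lemma~\ref{lem_pi_approx} with $\|f\|_{L_\infty}$ pulled out, and applies Theorem~\ref{thm:main2} to the quadrature term. Your extra care in checking that the hat-function CDFs are invertible (their inverses are given explicitly in Appendix~\ref{sec:aux}) is a valid point the paper leaves implicit, but it changes nothing in the argument.
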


\begin{proof}
Note that $\|f\|_{p,s,\bsgamma}< \infty$ implies $\|f\|_{L_{\infty}}< \infty$. We have the approximation in the following steps
\begin{eqnarray*}
\lefteqn{\int_{[\bsa, \bsb]} f(\bsx) \overline{\pi}(\bsx) \rd \bsx - \mathcal{A}_{N,s}(f)} \\ 
& = & \int_{[\bsa, \bsb]} f(\bsx) ( \overline{\pi}(\bsx) - \overline{\varphi}_{\bsm}(\bsx) ) \rd \bsx  +  \int_{[\bsa, \bsb]} f(\bsx) \overline{\varphi}_{\bsm}(\bsx) \rd \bsx - \mathcal{A}_{N,s}(f),
\end{eqnarray*}
where $\overline{\varphi}_{\bsm}$ is an approximation of $\pi$, which has then subsequently been normalized so that $\overline{\varphi}_{\bsm}$ is a probability density.

For the first part of the estimate we have from Lemma~\ref{lem_pi_approx}
\begin{equation}\label{intfphi}
\left| \int_{[\bsa, \bsb]} f(\bsx) ( \overline{\pi}(\bsx) - \overline{\varphi}_{\bsm}(\bsx) ) \rd \bsx \right| \le 2 \|f\|_{L_{\infty}} L_{\overline\pi}  \prod_{j=1}^s (b_j-a_j) \max_{j \in [s]} \left( \frac{b_j-a_j}{m_j} \right)^\beta.
\end{equation}

For the second part of the estimate, we have by Theorem~\ref{thm:main2} that
\begin{align*}
\lefteqn{\left| \int_{[\bsa, \bsb]} f(\bsx) \overline{\varphi}_{\bsm}(\bsx) \rd \bsx - \mathcal{A}_{N,s}(f) \right|} \\ 
& \le \|f\|_{p, s, \bsgamma} \frac{\delta+3r-2}{N} \,  \left[1+ 5 \cdot 2^t \log(2 N) \, G_{\bsgamma,q,\bsa,\bsb}(N)\right].
\end{align*}
This finishes the proof.
\end{proof}

\subsection{Coordinate-wise adaptive hat function approximation}\label{sec:3.2}

We discuss now an adaptive approximation of the target density using hat functions. First we generalize the definition of the hat functions in Section~\ref{subsec:hatfkt} to arbitrary partitions of the interval $[a, b]$ which allows us to use local refinements. 
Let $K \in \mathbb{N}$. Assume we are given a set of reals $y_k$ for $k \in \{0,1,\ldots,K\}$, such that $a = y_0 < y_1 < \cdots < y_{K-1} < y_K = b$ (previously we used an equally spaced partition with spacing $1/m$). Corresponding to this partition of $[a,b]$ we define the hat functions on $[a,b]$ by
$$h_{0}(x):=\left\{
\begin{array}{ll}
\frac{y_1 - x }{ y_1 - y_0 }  & \mbox{ if } y_0 \le x \le y_1,\\[1em]
0 & \mbox{ otherwise},
\end{array}\right.$$
for $k \in \{1,\ldots, K-1\}$ we define
$$h_{k}(x):=\left\{
\begin{array}{ll}
\frac{x-y_{k-1}}{y_k - y_{k-1}} & \mbox{ if } y_{k-1} \le x \le y_k,\\[1em]
\frac{y_{k+1}-x}{ y_{k+1} - y_k}  & \mbox{ if } y_k \le x \le y_{k+1},\\[1em]
0 & \mbox{ otherwise},
\end{array}\right.$$
and
$$h_{K}(x):=\left\{
\begin{array}{ll}
\frac{x-y_{K-1}}{y_K - y_{K-1}}  & \mbox{ if } y_{K-1}  \le x \le y_K,\\[1em]
0 & \mbox{ otherwise}.
\end{array}\right.$$
For $x \in [0,1)$ let $k \in \{0,1,\ldots, K-1\}$ be such that $x \in [y_k,y_{k+1})$. Then we have $$\sum_{\ell=0}^K h_{\ell}(x) = h_{k}(x)+h_{k+1}(x) = 1.$$

Consider now dimension $s > 1$ and an interval $[\bsa,\bsb]$. We assume that for every coordinate direction $j \in [s]$ we have a set of $y^{(j)}_k$ for $k \in \{0,1,\ldots,K_j\}$, such that
\begin{equation*}
a_j = y^{(j)}_0 < y^{(j)}_1 < \cdots < y^{(j)}_{K_j-1} < y^{(j)}_{K_j} = b_j.
\end{equation*}
Corresponding to this partition we define the hat functions $h_{k_j, j}$ for $k_j \in \{1, 2, \ldots, K_j\}$ and $j \in [s]$, as in the one-dimensional case
$$h_{0,j}(x):=\left\{
\begin{array}{ll}
\frac{y^{(j)}_{1} - x }{ y^{(j)}_{1} - y^{(j)}_{0} }  & \mbox{ if } y^{(j)}_{0} \le x \le y^{(j)}_{1},\\[1em]
0 & \mbox{ otherwise},
\end{array}\right.$$
for $k_j \in \{1,\ldots,K_j-1\}$
$$h_{k_j, j}(x):=\left\{
\begin{array}{ll}
\frac{x-y^{(j)}_{k_j-1}}{y^{(j)}_{k_j} - y^{(j)}_{k_j-1}} & \mbox{ if } y^{(j)}_{k_j-1} \le x \le y^{(j)}_{k_j},\\[1em]
\frac{ y^{(j)}_{k_j+1}-x}{ y^{(j)}_{k_j+1} - y^{(j)}_{k_j}}  & \mbox{ if } y^{(j)}_{k_j} \le x \le y^{(j)}_{k_j+1},\\[1em]
0 & \mbox{ otherwise},
\end{array}\right.$$
and
$$h_{K_j, j}(x):=\left\{
\begin{array}{ll}
\frac{x-y^{(j)}_{K_j-1}}{y^{(j)}_{K_j} - y^{(j)}_{K_j-1}}  & \mbox{ if } y^{(j)}_{K_j-1} \le x \le y^{(j)}_{K_j},\\[1em]
0 & \mbox{ otherwise}.
\end{array}\right.$$
For $\bsk = (k_1, \ldots, k_s) \in \cK := \bigotimes_{j=1}^s \{0, 1, \ldots, K_j\}$ we define the multivariate hat function by the product
\begin{equation*}
h_{\bsk}(\bsx) := \prod_{j=1}^s h_{k_j,j}(x_j).
\end{equation*}
Note that for $\bsx \in [\bsa,\bsb]$ we have
\begin{eqnarray*}
\sum_{\bsk \in \cK} h_{\bsk}(\bsx) =  \prod_{j=1}^s \left(\sum_{k_j =0}^{K_j} h_{k_j,j}(x_j)\right)=  1. 
\end{eqnarray*}
For $\bsk = (k_1, k_2, \ldots, k_s) \in \cK$, defining the grid points
\begin{equation}\label{gridpts_general}
\bsy_{\bsk} = ( y^{(1)}_{k_1}, y^{(2)}_{k_2}, \ldots, y^{(s)}_{k_s} ),
\end{equation}
we approximate $\pi$ by a piecewise linear approximation in the form of 
\begin{equation*}
\varphi_{\cK}(\bsx) = \sum_{\bsk \in \cK} \pi(\bsy_{\bsk}) h_{\bsk}(\bsx).
\end{equation*}

In the following, for an interval 
\begin{equation}\label{def:Rk}
R_{\bsk} := \bigotimes_{j=1}^s \left[y^{(j)}_{k_j},y^{(j)}_{k_j+1}\right)
\end{equation}
for $\bsk\in \cK^\ast := \bigotimes_{j=1}^s \{0, 1, \ldots, K_j-1\}$ we will denote the diameter (in $\ell_2$-norm) by ${\rm diam}(R_{\bsk})$, that is $${\rm diam}(R_{\bsk}):=\left(\sum_{j=1}^s (y^{(j)}_{k_j+1}-y^{(j)}_{k_j})^2\right)^{1/2}.$$

\begin{lemma}\label{le_adapt_pi_holder}
Assume that $\pi$ satisfies a local H\"older condition with exponent $\beta\in (0,1]$ of the form
\begin{equation}\label{Hcond2}
|\pi(\bsx)-\pi(\bsy)| \le L_{\pi}(\bsk) \, {\rm diam}([\bsx,\bsy])^{\beta} \quad \mbox{for all $\bsx, \bsy \in R_{\bsk}$,}
\end{equation}
for all $\bsk\in \cK^\ast $ and $R_{\bsk}$ like in \eqref{def:Rk}. Then, for $\bsx \in R_{\bsk}$ we have
$$|\pi(\bsx) - \varphi_{\cK}(\bsx) | \le L_{\pi}(\bsk) \, {\rm diam}(R_{\bsk})^{\beta} .$$
\end{lemma}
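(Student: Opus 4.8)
The plan is to mirror the proof of Lemma~\ref{le:app1hat}, now adapted to the general (non-uniform) partition and to the \emph{local} Hölder condition \eqref{Hcond2}. First I would fix $\bsx \in R_{\bsk}$ with $\bsk \in \cK^\ast$ and exploit the partition-of-unity identity $\sum_{\bsell \in \cK} h_{\bsell}(\bsx) = 1$ established just above the lemma to write
\[
\pi(\bsx) - \varphi_{\cK}(\bsx) = \sum_{\bsell \in \cK} \left(\pi(\bsx) - \pi(\bsy_{\bsell})\right) h_{\bsell}(\bsx).
\]

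Next I would observe that, because $x_j \in [y^{(j)}_{k_j}, y^{(j)}_{k_j+1})$ for each $j \in [s]$, in coordinate $j$ only the two hat functions $h_{k_j, j}$ and $h_{k_j+1, j}$ are nonzero at $x_j$. Hence $h_{\bsell}(\bsx) = 0$ unless $\bsell \in \bigotimes_{j=1}^s \{k_j, k_j + 1\}$, and the sum collapses to these $2^s$ terms. The corresponding grid points $\bsy_{\bsell}$ are exactly the corners of the closed box $\overline{R_{\bsk}}$, so each of them lies in $\overline{R_{\bsk}}$ together with $\bsx$.

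Then the local Hölder condition \eqref{Hcond2} applies on $R_{\bsk}$ (extended to its closure by the continuity that the condition forces), giving $|\pi(\bsx) - \pi(\bsy_{\bsell})| \le L_{\pi}(\bsk)\, {\rm diam}([\bsx, \bsy_{\bsell}])^\beta \le L_{\pi}(\bsk)\, {\rm diam}(R_{\bsk})^\beta$, since any segment joining two points of the box has length at most the diameter of the box. Finally, I would factor the surviving hat-function sum as $\prod_{j=1}^s \left(h_{k_j, j}(x_j) + h_{k_j+1, j}(x_j)\right) = 1$ and pull the uniform Hölder bound out of the (nonnegative) convex-combination sum, yielding
\[
|\pi(\bsx) - \varphi_{\cK}(\bsx)| \le L_{\pi}(\bsk)\, {\rm diam}(R_{\bsk})^\beta \sum_{\bsell \in \bigotimes_{j} \{k_j, k_j+1\}} h_{\bsell}(\bsx) = L_{\pi}(\bsk)\, {\rm diam}(R_{\bsk})^\beta.
\]

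The argument is essentially routine; the only points requiring care are the bookkeeping of the localized index set $\bigotimes_{j} \{k_j, k_j+1\}$ and the mild technicality that $R_{\bsk}$ is half-open while its corners sit on the closure --- both handled by the continuity implied by \eqref{Hcond2} and by the observation that the interpolation nodes active at $\bsx$ are precisely the corners of the box. I do not anticipate any genuine obstacle beyond this bookkeeping, as the non-uniformity of the partition never enters once everything is phrased through the partition of unity and the diameter.
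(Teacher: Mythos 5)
Your proposal is correct and follows essentially the same route as the paper's proof: the partition-of-unity decomposition $\pi(\bsx)-\varphi_{\cK}(\bsx)=\sum_{\bsell\in\cK}(\pi(\bsx)-\pi(\bsy_{\bsell}))h_{\bsell}(\bsx)$, the collapse to the $2^s$ corner indices $\bigotimes_{j}\{k_j,k_j+1\}$, the local H\"older bound via ${\rm diam}(R_{\bsk})$, and the observation that the surviving barycentric weights sum to $1$ (the paper writes this sum over subsets $\uu\subseteq[s]$, but that is the same factorization you use). Your explicit handling of the half-open-box/closure technicality is a minor refinement the paper leaves implicit.
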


\begin{proof}
Assume that  $\bsx \in R_{\bsk}$ for some $\bsk \in \cK^\ast$. Then the error is
\begin{eqnarray*}
\lefteqn{ |\pi(\bsx) - \varphi_{\cK}(\bsx) | } \\ & = & \left|\sum_{\bsell \in \cK} (\pi(\bsx)-\pi(\bsy_{\bsell})) h_{\bsell}(\bsx)\right|\\
& = & \left|\sum_{\bsell \in \bigotimes_j \{k_j, k_{j}+1\}} (\pi(\bsx)-\pi(\bsy_{\bsell})) h_{\bsell}(\bsx)\right|\\
& = & \left|\sum_{\uu \subseteq [s]} \left(\pi(\bsx)-\pi((y^{(j)}_{k_j})_{\uu},(y^{(j)}_{k_j+1})_{\uu^c})\right) \prod_{j\in \uu} \frac{ y^{(j)}_{k_j+1}-x_j}{y^{(j)}_{k_j+1}-y^{(j)}_{k_j}} \prod_{j \not\in \uu} \frac{x_j-y^{(j)}_{k_j} }{y^{(j)}_{k_j+1}-y^{(j)}_{k_j}} \right|.
\end{eqnarray*}
Using \eqref{Hcond2} and observing that
$$
\sum_{\uu \subseteq [s]} \prod_{j\in \uu} \frac{ y^{(j)}_{k_j+1}-x_j}{y^{(j)}_{k_j+1}-y^{(j)}_{k_j}} \prod_{j \not\in \uu} \frac{x_j-y^{(j)}_{k_j} }{y^{(j)}_{k_j+1}-y^{(j)}_{k_j}} = \prod_{j=1}^s\left( \frac{ y^{(j)}_{k_j+1}-x_j}{y^{(j)}_{k_j+1}-y^{(j)}_{k_j}} + \frac{x_j-y^{(j)}_{k_j} }{y^{(j)}_{k_j+1}-y^{(j)}_{k_j}}\right)=1
$$
we obtain for $\bsx \in R_{\bsk}$ that
$$|\pi(\bsx) - \varphi_{\cK}(\bsx) | \le L_{\pi}(\bsk) \, {\rm diam}(R_{\bsk})^{\beta},$$
as desired.
\end{proof}

Again we switch to the normalized probability density $\overline{\pi}:=\pi/z$. The normalized version of the local H\"older condition \eqref{Hcond2} for $\pi$ is then re-stated in the form
\begin{equation}\label{Hcond2no}
|\overline{\pi}(\bsx)-\overline{\pi}(\bsy)| \le  L_{\overline{\pi}}(\bsk) \, {\rm diam}([\bsx,\bsy])^{\beta} \quad \mbox{for all $\bsx, \bsy \in R_{\bsk}$,}
\end{equation}
for some $\beta \in (0,1]$, for all $\bsk\in \cK^\ast$, where $R_{\bsk}$ is like in \eqref{def:Rk} and where $L_{\overline{\pi}}(\bsk) := L_{\pi}(\bsk)/z$ are the local H\"older constants for $\overline{\pi}$.

The proof of the following lemma is analoguous to the proof of Lemma~\ref{lem_pi_approx}, where we use Lemma~\ref{le_adapt_pi_holder} instead of Lemma~\ref{le:app1hat}.

\begin{lemma}\label{lem_pi_adapt}
Let $\overline{\pi}$ be the normalized probability density satisfying the local H\"older condition \eqref{Hcond2no} and let $\overline{\varphi}_{\cK}$ be the normalized version of $\varphi_{\cK}$. Then we have
$$\int_{[\bsa,\bsb]} | \overline{\pi}(\bsx) - \overline{\varphi}_{\cK}(\bsx)| \rd \bsx \leq 2  \sum_{\bsk = (k_1, \ldots, k_s) \in \cK^\ast} \prod_{j=1}^s (y_{k_j+1}^{(j)} - y_{k_j}^{(j)}) L_{\overline{\pi}}(\bsk) \, {\rm diam}(R_{\bsk})^{\beta}.$$
\end{lemma}

Hence the goal of the adaptive strategy is to find a partitioning such that the terms $$\prod_{j=1}^s (y_{k_j+1}^{(j)} - y_{k_j}^{(j)}) L_{\overline{\pi}}(\bsk) \, {\rm diam}(R_{\bsk})^{\beta} $$ all have a similar value. Roughly speaking, we refine in those regions where $\prod_{j=1}^s (y_{k_j+1}^{(j)} - y_{k_j}^{(j)}) L_{\overline{\pi}}(\bsk) \, {\rm diam}(R_{\bsk})^{\beta} $ is large.

In practice, we construct the approximation $\varphi_{\cK}(\bsx)$ iteratively. We label each coordinate interval $[y_k^{(j)}, y_{k+1}^{(j)})$ for $k \in \{0, \ldots, K_j-1\}$ and $j \in [s]$ with a binary variable $\theta_{k}^{(j)} \in \{\mathrm{true,false}\}$, which indicates if the coordinate interval $[y_k^{(j)}, y_{k+1}^{(j)})$ will be refined. In each iteration, we first refine every coordinate interval that is labelled as ``true'' by dividing it into two equal-size sub-intervals. This gives a new grid. Then, by evaluating the function $\pi$ on the newly added grid points, we build an intermediate approximation, denoted by $\varphi_{\cK'}(\bsx)$. In the following adaptive refinement, we only want to keep the grid points in $\varphi_{\cK'}(\bsx)$ that contribute to the error reduction.

The difference between $\varphi_{\cK'}(\bsx)$ and $\varphi_{\cK}(\bsx)$ provides local error indicators for the original approximation $\varphi_{\cK}(\bsx)$. For each of the coordinate intervals on the original grid that contributes to any of the local error indicators above the prescribed refinement threshold, we refine it and label the refined sub-intervals as ``true'' for future refinement candidates; otherwise, we keep the original interval and label it ``false''. This way, we can adaptively refine the approximation in locations where the error is large. After each step of adaptive refinement, the new approximation is obtained without any extra function evaluations, as the set of grid points used is a subset of the grid points in $\varphi_{\cK'}(\bsx)$. We stop the adaptive refinement until all coordinate intervals are labelled as ``false''.

\subsection{Quasi-Monte Carlo sampling of the adaptive hat function approximation of a bounded target density defined on an interval}

We can generalize Theorem~\ref{thm_hat_approx_QMC} to the adaptive hat function approximation $\varphi_{\cK}$ introduced in the previous section. At first we need to normalize. The normalization constants are
\begin{equation}\label{norm:const}
c_{k_j}^{(j)}=\begin{cases}
\frac{2}{y^{(j)}_{1}-a_j} & \mbox{for } k_j=0,\\[0.5em]
\frac{2}{y^{(j)}_{k_j+1}-y^{(j)}_{k_j-1}} & \mbox{for } k_j \in \{1,\ldots,K_j-1\},\\[0.5em]
\frac{2}{b_j-y^{(j)}_{K_j-1}} & \mbox{for } k_j = K_j. 
\end{cases}
\end{equation} 
We write
\begin{equation*}
\varphi_{\cK}(\bsx) = \sum_{\bsk \in \cK} \left[\left(\prod_{j=1}^s c_{k_j}^{(j)}\right)^{-1} \pi(\bsy_{\bsk})\right]  \left(\prod_{j=1}^s c_{k_j}^{(j)}\right) h_{\bsk}(\bsx),
\end{equation*}
where now $(\prod_{j=1}^s c_{k_j}^{(j)}) h_{\bsk}(\bsx)$ is a probability density, and put $$c_{\bsk}:=\left(\prod_{j=1}^s c_{k_j}^{(j)}\right)^{-1} \pi(\bsy_{\bsk}).$$
This presentation of $\varphi_{\cK}$ is exactly of the form \eqref{tensform} with $\cN=\cK$.
Then 
\begin{equation}\label{def:ci2}
c = \sum_{\bsk \in \cK} c_{\bsk} =\int_{[\bsa, \bsb]} \varphi_{\cK}(\bsy) \rd \bsy  \approx \int_{[\bsa, \bsb]} \pi(\bsy) \rd \bsy.
\end{equation}

Now we use the algorithm defined in \eqref{alg_A} with the present $c_{\bsk}$ and $\cN=\cK$, i.e., 
\begin{eqnarray}\label{def:algthm3}
\mathcal{A}_{N,s}(f) & = & \sum_{\bsk \in \cK} \left(\prod_{j=1}^s c_{k_j}^{(j)}\right)^{-1} \frac{\pi(\bsy_{\bsk})}{N_{\bsk}} \sum_{n=0}^{N_{\bsk}-1}f(\Phi_{\bsk}^{-1}(\bsy_n)).
\end{eqnarray}

\begin{theorem}\label{thm_adaptive_hats}
Let $1\le p,q\le \infty$ be such that $1/p+1/q=1$. Assume that the target density  $\overline{\pi}:[\bsa,\bsb] \rightarrow \RR_0^+$ is bounded and satisfies the local H\"older condition \eqref{Hcond2no} with local H\"older constants $L_{\overline\pi}(\bsk)$ and exponent $\beta \in (0,1]$. Let $f:[\bsa,\bsb]\rightarrow \RR$ satisfy  $\|f\|_{p,s,\bsgamma}< \infty$, where the norm is defined by \eqref{def:norm}. Let $\delta>0$ and let $\mathcal{A}_{N,s}$ be the algorithm from \eqref{def:algthm3} based on a digital $(t,s)$-sequence over $\FF_2$ with non-singular upper triangular generating matrices $C_1,\ldots,C_s$. Then we have
\begin{align*}
\left|  \int_{[\bsa, \bsb]} f(\bsx) \overline{\pi}(\bsx) \rd \bsx - \mathcal{A}_{N,s}(f) \right|  \le & 2 \|f\|_{L_\infty} \sum_{\bsk \in \cK^\ast} \prod_{j=1}^s (y_{k_j+1} - y_{k_j}) L_{\overline{\pi}}(\bsk) \, {\rm diam}(R_{\bsk})^{\beta} \\
& +\|f\|_{p,s,\bsgamma}  \frac{\delta+3 r-2}{N} \left[1 + 5 \cdot 2^t \log(2 N)  G_{\bsgamma,q,\bsa,\bsb}(N)\right],
\end{align*}
where $r=r(\delta,N)$ is given by \eqref{def:r} and $G_{\bsgamma,q,\bsa,\bsb}(N)$ is defined in \eqref{def:Gab}.
\end{theorem}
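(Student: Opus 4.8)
The plan is to mirror exactly the two-step decomposition used in the proof of Theorem~\ref{thm_hat_approx_QMC}, replacing the uniform-grid approximation $\overline{\varphi}_{\bsm}$ by the adaptive approximation $\overline{\varphi}_{\cK}$ and substituting the adaptive approximation bound (Lemma~\ref{lem_pi_adapt}) for the uniform one (Lemma~\ref{lem_pi_approx}). First I would note that $\|f\|_{p,s,\bsgamma}<\infty$ forces $\|f\|_{L_\infty}<\infty$, so both quantities appearing in the claimed bound are finite. Then I would split the error by inserting $\overline{\varphi}_{\cK}$:
\begin{align*}
\lefteqn{\int_{[\bsa,\bsb]} f(\bsx)\overline{\pi}(\bsx)\rd\bsx - \mathcal{A}_{N,s}(f)}\\
&= \int_{[\bsa,\bsb]} f(\bsx)\left(\overline{\pi}(\bsx)-\overline{\varphi}_{\cK}(\bsx)\right)\rd\bsx + \left(\int_{[\bsa,\bsb]} f(\bsx)\overline{\varphi}_{\cK}(\bsx)\rd\bsx - \mathcal{A}_{N,s}(f)\right),
\end{align*}
and estimate each summand separately, combining them via the triangle inequality at the end.

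For the first summand I would bound $|f(\bsx)|\le\|f\|_{L_\infty}$ pointwise and pull this constant out of the integral, reducing the task to estimating $\int_{[\bsa,\bsb]}|\overline{\pi}(\bsx)-\overline{\varphi}_{\cK}(\bsx)|\rd\bsx$. This is precisely the quantity controlled by Lemma~\ref{lem_pi_adapt}, which yields the bound $2\sum_{\bsk\in\cK^\ast}\prod_{j=1}^s(y^{(j)}_{k_j+1}-y^{(j)}_{k_j})\,L_{\overline{\pi}}(\bsk)\,{\rm diam}(R_{\bsk})^{\beta}$. Multiplying by $\|f\|_{L_\infty}$ reproduces the first line of the claimed estimate verbatim.

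For the second summand the essential observation is that $\varphi_{\cK}$ has already been rewritten, in the text preceding the theorem, in exactly the mixture form \eqref{tensform} with index set $\cN=\cK$ and coefficients $c_{\bsk}=(\prod_{j=1}^s c^{(j)}_{k_j})^{-1}\pi(\bsy_{\bsk})$, and that $\mathcal{A}_{N,s}$ in \eqref{def:algthm3} is the corresponding instance of the generic algorithm \eqref{alg_A}. Consequently Theorem~\ref{thm:main2} applies directly to $\overline{\varphi}_{\cK}$ and gives the bound $\|f\|_{p,s,\bsgamma}\,\frac{\delta+3r-2}{N}\,[1+5\cdot2^t\log(2N)\,G_{\bsgamma,q,\bsa,\bsb}(N)]$, which is the second line of the claimed estimate. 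Adding the two contributions completes the proof.

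I do not anticipate a genuine obstacle here, since the argument is a routine concatenation of results already established: the decomposition is identical to that of Theorem~\ref{thm_hat_approx_QMC}, and the only substantive inputs are Lemma~\ref{lem_pi_adapt} and Theorem~\ref{thm:main2}. The one point requiring care is verifying that the adaptive construction genuinely fits the framework of Theorem~\ref{thm:main2}, namely that the normalization constants $c^{(j)}_{k_j}$ in \eqref{norm:const} turn each $h_{\bsk}$ into a product of one-dimensional probability densities so that the nonnegativity condition $c_{\bsk}\ge0$ and the mixture structure \eqref{tensform} hold; but this has already been checked in the discussion surrounding \eqref{def:ci2}, so invoking Theorem~\ref{thm:main2} is legitimate.
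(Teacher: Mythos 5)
Your proposal is correct and follows exactly the route the paper intends: the authors omit the proof, stating only that it ``works very similarly to the proof of Theorem~\ref{thm_hat_approx_QMC}'', and your argument---splitting off the approximation error, bounding it via Lemma~\ref{lem_pi_adapt} in place of Lemma~\ref{lem_pi_approx}, and applying Theorem~\ref{thm:main2} to the mixture form of $\varphi_{\cK}$ with $c_{\bsk}=(\prod_{j=1}^s c^{(j)}_{k_j})^{-1}\pi(\bsy_{\bsk})$---is precisely that omitted argument. Your closing verification that the adaptive normalization constants \eqref{norm:const} place $\varphi_{\cK}$ in the framework \eqref{tensform} is the right point of care and is indeed settled by the discussion around \eqref{def:ci2}.
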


The proof of the theorem works very similarly to the proof of Theorem~\ref{thm_hat_approx_QMC}. We omit the details here.

\section{Quasi-Monte Carlo sampling of hat function approximations of target densities defined on general domains via a partition of unity}\label{sec5:partition}

In this section, we consider the approximation of integrals of the form $$\int_D f(\bsx) \overline{\pi}(\bsx) \rd \bsx,$$ where the probability density function $\overline{\pi}$ is only known up to an unknown normalizing constant, i.e., we are given $\pi$ such that $\overline{\pi} = \pi / z$, for some unknown positive real number~$z$.

\subsection{Partition of unity approximation of a general density}
For target densities that have localized features, e.g., multi-modality, it can be more efficient to build multiple local hat function approximations to adapt to the local features rather than only a global one. We consider the use of a partition-of-unity method to achieve this. Let $\overline\pi$ be a Lebesgue measurable target probability density defined on a Lebesgue measurable domain $D \subseteq \mathbb{R}^s$ with bounded $L^p$ norm
\begin{equation*}
\left(\int_D |\overline\pi(\bsx)|^p \rd \bsx \right)^{1/p} \quad \mbox{for some } p \in [1,\infty].
\end{equation*}
Let $I \in \NN$. Given a set of non-negative functions $\psi_1, \ldots, \psi_I: D \to \mathbb{R}_0^+$ such that $\int_D \psi_i(\bsx) \rd \bsx = 1 $ and a set of positive weights $\alpha_1, \ldots, \alpha_I > 0$ such that $\sum_{i=1}^I \alpha_i = 1$, we can define the function
\begin{equation}
    \Psi(\bsx) = \sum_{i=1}^I \alpha_i \psi_i(\bsx) \quad \mbox{for $\bsx \in D$.} \label{eq:mixture_density}
\end{equation} 
This way, the set of functions $\{\frac{\alpha_i\psi_i}{\Psi}\}_{i = 1}^I$ defines a partition of unity. Assuming that $\Psi(\bsx) > 0$ for all $\bsx \in D$, the target density can be equivalently expressed as
\begin{equation*}
\overline\pi = \overline\pi \, \frac{\sum_{i=1}^I \alpha_i \psi_i}{\Psi} = \sum_{i=1}^I \alpha_i \, \frac{\overline\pi \psi_i}{\Psi} = \frac{1}{z}\sum_{i=1}^I \alpha_i \, \frac{\pi \psi_i}{\Psi}.
\end{equation*}
With a suitable choice of the set of functions $\{\psi_i\}_{i = 1}^I$ and weights $\{\alpha_i\}_{i = 1}^I$, approximating the function $ \overline{\pi} \psi_i / \Psi$ can be an easier task compared to directly approximating $\overline{\pi}$. %
For example, if $ \Psi$ is a good approximation to $\pi$, each of the localized functions will be a perturbation of the localized function $\psi_i$.

In this work, we construct such a partition of unity using the Gaussian mixture, where the functions $\psi_i$ are Gaussian PDFs with different mean vectors $\bmu_i$ and covariance matrices $\bSigma_i$. %
The expectation-maximization algorithm \cite{dempster1977maximum,wu1983convergence} can be used to iteratively identify the weights $\alpha_i$, the mean vectors $\bmu_i$ and the covariance matrices $\bSigma_i$. 

The goal is now to approximate the functions $\widetilde{\pi}^{(i)} := \pi \psi_i/\Psi$ for $i \in \{1,\ldots,I\}$. Suppose each of the covariance matrices of $\psi_i$ has an eigendecomposition $\bSigma_i = \bU_i^{} \bLambda_i^{} \bU_i^\top$, where $\bU_i$ is a unitary matrix consisting of all eigenvectors and $\bLambda_i$ is a diagonal matrix consisting of all corresponding eigenvalues of $\bSigma_i$. Since the tails of $\widetilde{\pi}^{(i)}$ can be controlled by the Gaussian density $\psi_i$, we can introduce the change of variable
\[
\bsz = T_i(\bsx) := \bU_i^\top (\bsx - \bmu_i),
\]
so that $\psi_i \circ T_i^{-1}$, i.e., $\psi_i \circ T_i^{-1}(\bsz)= \psi_i(T_i^{-1}(\bsz))$,
is the density of a zero mean Gaussian random vector with covariance matrix $\bLambda_i$. This allows us to truncate the domain of the function $\widetilde{\pi}^{(i)}$ to a rotated interval 
\begin{equation}\label{rot:int}
B_i = \big\{\bsx = T_i^{-1}(\bsz) \ : \ \bsz \in [-\bsa^{(i)}, \bsa^{(i)}] \big\} %
\end{equation}
for some $\bsa^{(i)} > 0$ (to be understood component-wise). Applying the change of variable, the approximation of $\widetilde{\pi}^{(i)}(\bsx)$ for $\bsx \in B_i$ can be obtained by approximating the function $\widetilde{\pi}^{(i)} \circ T_i^{-1}$ using the coordinate-wise adaptive approach on $\bsz \in [-\bsa^{(i)}, \bsa^{(i)}]$.

The approximation of the target density satisfies
\begin{eqnarray*}
\left| \overline\pi(\bsx) - \frac{1}{z}\sum_{i=1}^I \alpha_i \widetilde{\pi}^{(i)}(\bsx) \mathbb{1}_{B_i}(\bsx) \right| & \le &  \frac{1}{z}\sum_{i=1}^I \alpha_i \left| \widetilde{\pi}^{(i)}(\bsx) -  \widetilde{\pi}^{(i)}(\bsx) \mathbb{1}_{B_i}(\bsx) \right| \\  
& = &  \frac{\overline\pi(\bsx)}{\Psi(\bsx)} \sum_{i = 1}^I \alpha_i \left|\psi^{(i)}(\bsx) - \psi^{(i)}(\bsx) \mathbb{1}_{B_i}(\bsx) \right| \\
& \leq & \frac{\overline\pi(\bsx)}{\Psi(\bsx)} \sum_{i = 1}^I \alpha_i \sup_{\bsx \notin B_i} \psi_i(\bsx),
\end{eqnarray*}
where $\mathbb{1}_{B_i}$ denotes the indicator function of the rotated interval $B_i$ for $i \in \{1,\ldots,I\}$. Collecting the eigenvalues of $\bSigma_i$ into a vector $\blambda_i$, we can choose $\bsa^{(i)} = a \surd \blambda_i$ for some $a>0$, and hence $B_i$, such that $\psi_i(\bsx) \le \varepsilon$ for all $\bsx \notin B_i$ for some $\varepsilon>0$ chosen to be sufficiently small. Therefore for all $\bsx \in D$ we have
\begin{equation*}
\left| \overline\pi(\bsx) - \frac{1}{z}\sum_{i=1}^I \alpha_i \widetilde{\pi}^{(i)}(\bsx) \mathbb{1}_{B_i}(\bsx) \right|  \le \varepsilon  \frac{\overline\pi(\bsx)}{\Psi(\bsx)}.
\end{equation*}

\subsection{Adaptive hat function approximation on rotated intervals of partition of unity functions}

The computation of $\int_D f(\bsx) \overline\pi(\bsx) \rd \bsx$ consists of two integration problems, namely, the approximation of $\int_D f(\bsx) \pi(\bsx) \rd \bsx$ and the approximation of $z = \int_D \pi(\bsx) \rd \bsx$. We first consider the approximation of the first integral. Using the partition of unity 
\begin{eqnarray*}
\int_D f(\bsx) \sum_{i=1}^I \alpha_i   \widetilde{\pi}^{(i)}(\bsx) \mathbb{1}_{B_i}(\bsx) \rd \bsx & = & \sum_{i=1}^I \alpha_i \int_{B_i} f(\bsx) \widetilde{\pi}^{(i)}(\bsx) \rd \bsx \\ 
& = & \sum_{i=1}^I \alpha_i \int_{[-\bsa^{(i)}, \bsa^{(i)}]} f(T_i^{-1} (\bsz)) \widetilde{\pi}^{(i)}(T_i^{-1} (\bsz)) \rd \bsz, %
\end{eqnarray*}
in which the key is the approximation of each $\widetilde{\pi}^{(i)} \circ T_i^{-1}$. Then the constant $z$ can be computed using the approximations of $\widetilde{\pi}^{(i)} \circ T_i^{-1}$ for $i \in \{1, \ldots, I\}$. To approximate $\widetilde{\pi}^{(i)} \circ T_i^{-1}$, we need to assume that this function satisfies a H\"older condition for some exponent $\beta \in (0,1]$, i.e., for every pair $\bsz, \bsz' \in [-\bsa^{(i)}, \bsa^{(i)}]$ we have
\begin{align*}
|\widetilde{\pi}^{(i)}( T_i^{-1}(\bsz)) - \widetilde{\pi}^{(i)}(T_i^{-1}(\bsz')) | 
& \le C_{\widetilde{\pi}^{(i)}} \,\|\bU_i ( \bsz - \bsz') \|_2^{\beta}  = C_{\widetilde{\pi}^{(i)}} \,\|( \bsz - \bsz') \|_2^{\beta}
\end{align*}
for some positive $ C_{\widetilde{\pi}^{(i)}}$ depending on $\widetilde{\pi}^{(i)}$. In this scenario, the use of the $\ell_2$-norm instead of the $\ell_\infty$-norm used in previous sections is beneficial, as it enables us to utilize unitary transformations.

\begin{proposition}\label{mix_holder}
We assume the unnormalized target density $\pi$ and all of the functions $\psi_i$, $i \in \{1, \ldots, I\}$, are H\"{o}lder continuous with some exponent $\beta \in (0,1]$. In addition, we assume the unnormalized target density $\pi$ and the function $\Psi$ satisfies 
\[
 C_{\pi, \Psi}:= \sup_{\bsx \in D} \frac{\pi(\bsx)}{\Psi(\bsx)} < \infty.
\]
Then each of the functions $\widetilde{\pi}^{(i)}$, $i \in \{1, \ldots, I\}$, satisfies a H\"{o}lder condition with the same exponent $\beta \in (0,1]$. That is, there exists a constant $L_{\widetilde{\pi}^{(i)}}$ such that
\begin{equation*}
|\widetilde{\pi}^{(i)}( \bsx) - \widetilde{\pi}^{(i)}(\bsy) | \le L_{\widetilde{\pi}^{(i)}} \| \bsx -  \bsy \|^\beta_2 \quad \mbox{ for all $\bsx, \bsy \in B_i$.}
\end{equation*}
\end{proposition}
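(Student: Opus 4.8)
The plan is to show that $\widetilde{\pi}^{(i)} = \pi\psi_i/\Psi$ inherits H\"older continuity from its building blocks by treating it as a product of H\"older functions divided by a H\"older function that is bounded away from zero on the compact set $B_i$. I would organize the argument around three preliminary observations followed by a single add-and-subtract estimate.

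First, I would record that $\Psi = \sum_{j=1}^I \alpha_j \psi_j$ is itself H\"older continuous with exponent $\beta$: since each $\psi_j$ satisfies $|\psi_j(\bsx)-\psi_j(\bsy)| \le L_{\psi_j}\|\bsx-\bsy\|_2^\beta$, linearity together with the triangle inequality gives $|\Psi(\bsx)-\Psi(\bsy)| \le L_\Psi \|\bsx-\bsy\|_2^\beta$ with $L_\Psi := \sum_{j=1}^I \alpha_j L_{\psi_j}$. Second, I would use that $B_i$ defined in \eqref{rot:int} is the image of the compact box $[-\bsa^{(i)},\bsa^{(i)}]$ under the affine map $T_i^{-1}$, hence compact; since $\Psi$ is continuous and strictly positive (a positive combination of everywhere-positive Gaussian densities), it attains a strictly positive minimum $m_i := \inf_{\bsx \in B_i}\Psi(\bsx) > 0$ on $B_i$. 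Third, each Gaussian density $\psi_i$ is bounded, say $M_i := \sup_{\bsx \in B_i}\psi_i(\bsx) < \infty$.

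With these in hand, for $\bsx, \bsy \in B_i$ I would split
$$\widetilde{\pi}^{(i)}(\bsx) - \widetilde{\pi}^{(i)}(\bsy) = \frac{\pi(\bsx)}{\Psi(\bsx)}\bigl(\psi_i(\bsx)-\psi_i(\bsy)\bigr) + \psi_i(\bsy)\left(\frac{\pi(\bsx)}{\Psi(\bsx)} - \frac{\pi(\bsy)}{\Psi(\bsy)}\right),$$
and then rewrite the quotient difference via a further add-and-subtract as
$$\frac{\pi(\bsx)}{\Psi(\bsx)} - \frac{\pi(\bsy)}{\Psi(\bsy)} = \frac{\pi(\bsx)}{\Psi(\bsx)}\cdot\frac{\Psi(\bsy)-\Psi(\bsx)}{\Psi(\bsy)} + \frac{\pi(\bsx)-\pi(\bsy)}{\Psi(\bsy)}.$$
Bounding $\pi/\Psi$ by $C_{\pi,\Psi}$, using $\Psi(\bsy) \ge m_i$ and $\psi_i(\bsy)\le M_i$, and inserting the H\"older estimates for $\pi$, $\psi_i$ and $\Psi$, every term becomes a constant multiple of $\|\bsx-\bsy\|_2^\beta$, which yields the claim with, for instance, $L_{\widetilde{\pi}^{(i)}} = C_{\pi,\Psi}L_{\psi_i} + \frac{M_i}{m_i}\bigl(C_{\pi,\Psi}L_\Psi + L_\pi\bigr)$.

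The routine products-of-H\"older bookkeeping is straightforward; the step requiring care is controlling the quotient, which is precisely why the strictly positive lower bound $m_i$ on $\Psi$ over $B_i$ is essential. This lower bound is exactly where compactness of $B_i$ and the everywhere-positivity of the Gaussian mixture $\Psi$ enter, and I expect it to be the crux of the argument, since without it the quotient $\pi/\Psi$ need not be H\"older even when $\pi$ and $\Psi$ are.
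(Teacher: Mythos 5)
Your proof is correct, but it controls the quotients differently from the paper, and the difference is worth noting. You handle the division by $\Psi$ via compactness: $B_i$ is the affine image of a compact box, $\Psi$ is continuous and everywhere positive, so $m_i := \inf_{\bsx \in B_i} \Psi(\bsx) > 0$, and your H\"older constant ends up containing the factor $M_i/m_i$. The paper instead never invokes a lower bound on $\Psi$ at all: it splits as $|\widetilde{\pi}^{(i)}(\bsx) - \widetilde{\pi}^{(i)}(\bsy)| \le \frac{\psi_i(\bsx)}{\Psi(\bsx)}\,|\pi(\bsx)-\pi(\bsy)| + \pi(\bsy)\,\bigl|\frac{\psi_i(\bsx)}{\Psi(\bsx)} - \frac{\psi_i(\bsy)}{\Psi(\bsy)}\bigr|$ (your decomposition groups the terms the other way, factoring out $\pi/\Psi$ against the $\psi_i$-difference) and then exploits the mixture structure: since $\Psi \ge \alpha_i \psi_i$ pointwise, one has $\sup_{\bsx \in D} \psi_i(\bsx)/\Psi(\bsx) \le 1/\alpha_i$, which together with the assumed bound $\pi/\Psi \le C_{\pi,\Psi}$ lets every ratio be bounded uniformly on $D$, yielding a constant of the shape $\frac{L_\pi}{\alpha_i} + C_{\pi,\Psi}\bigl(\frac{L_\Psi}{\alpha_i} + L_{\psi_i}\bigr)$. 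What the paper's route buys is twofold. First, its constants do not depend on $B_i$: your $m_i$ is the minimum of a Gaussian mixture over the rotated box, which is exponentially small in the truncation radius $\|\bsa^{(i)}\|$, so $M_i/m_i$ degrades badly as $\varepsilon$ is taken small and the boxes grow, whereas $1/\alpha_i$ is fixed. Second, the paper's estimate holds on all of $D$ without compactness or the everywhere-positivity of Gaussians entering (only $\Psi > 0$ pointwise, assumed earlier in the section, so the ratios are defined); your argument genuinely needs both. Your closing remark that the lower bound on $\Psi$ is ``the crux'' is thus true for your route but avoidable: the structural inequality $\psi_i/\Psi \le 1/\alpha_i$ is the cleaner substitute, and it is the one observation you missed.
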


\begin{proof}
By the definition of the partition of unity in \eqref{eq:mixture_density}, we have the property
\[
\sup_{\bsx \in D} 
\frac{\psi_i(\bsx)}{\Psi(\bsx)} \le \frac{1}{\alpha_i}.
\]
In addition, the  H\"{o}lder continuity assumption on $\psi_i$ also makes $\Psi$  H\"{o}lder continuous. Thus, we have
\begin{eqnarray*}
|\widetilde{\pi}^{(i)}(\bsx) - \widetilde{\pi}^{(i)}(\bsy) | & \le & \left| \pi(\bsx) \frac{\psi_i(\bsx)}{\Psi(\bsx)} - \pi(\bsy) \frac{\psi_i(\bsx)}{\Psi(\bsx)} \right| + \left|\pi(\bsy) \frac{\psi_i(\bsx)}{\Psi(\bsx)} - \pi(\bsy) \frac{\psi_i(\bsy)}{\Psi(\bsy)} \right| \\ & \le & \frac{\psi_i(\bsx)}{\Psi(\bsx)}  |\pi(\bsx) - \pi(\bsy) |  + \pi(\bsy) \left|\frac{\psi_i(\bsx)}{\Psi(\bsx)} - \frac{\psi_i(\bsy)}{\Psi(\bsy)} \right| \\ & \le &  \frac{L_{\pi}}{\alpha_i} \|\bsx-\bsy\|_2^\beta + \pi(\bsy) \left|\frac{\psi_i(\bsx)}{\Psi(\bsx)} - \frac{\psi_i(\bsy)}{\Psi(\bsy)} \right| ,
\end{eqnarray*}
where $L_\pi$ is the H\"older constant for the unnormalized density $\pi$ with respect to the $\ell_2$-norm. For the second term in the above upper bound, we have
\begin{align*}
\pi(\bsy) \left|\frac{\psi_i(\bsx)}{\Psi(\bsx)} - \frac{\psi_i(\bsy)}{\Psi(\bsy)} \right| & = \frac{\pi(\bsy)}{\Psi(\bsx) \Psi(\bsy)}  \left|\psi_i(\bsx)\Psi(\bsy) - \psi_i(\bsx)\Psi(\bsx)+  \psi_i(\bsx)\Psi(\bsx)- \psi_i(\bsy)\Psi(\bsx) \right|  \\
& \leq \frac{\pi(\bsy)}{\Psi(\bsx) \Psi(\bsy)} \left( \psi_i(\bsx) \left|\Psi(\bsy) - \Psi(\bsx) \right|+  \Psi(\bsx)\left|\psi_i(\bsx)- \psi_i(\bsy) \right|\right) \\
& =  \frac{\pi(\bsy)}{\Psi(\bsy)}\left(  \frac{\psi_i(\bsx)}{\Psi(\bsx)} \left|\Psi(\bsy) - \Psi(\bsx) \right|+   \left|\psi_i(\bsx)- \psi_i(\bsy) \right|\right) \\
& \leq C_{\pi, \Psi} \left(  \frac{1}{\alpha_i} \left|\Psi(\bsy) - \Psi(\bsx) \right|+   \left|\psi_i(\bsx)- \psi_i(\bsy) \right|\right).
\end{align*}
This way, as long as each $\psi_i$ is H\"{o}lder continuous (with exponent $\beta \in (0,1]$), then $\widetilde{\pi}^{(i)}$ satisfies a H\"older condition with the same exponent $\beta \in (0,1]$.
\end{proof}

\subsection{The combined algorithm}\label{sec:5.3}

For every $i \in \{1,\ldots,I\}$, we apply the adaptive hat function approach to approximate the function $\widetilde{\pi}^{(i)} \circ T_i^{-1}$ on $[-\bsa^{(i)},\bsa^{(i)}]$. For each rotated interval $B_i$ (see \eqref{rot:int}), this defines grid points on the transformed coordinates $\bsz$ as $y^{(i,j)}_k$, $k \in \{0,1,\ldots,K^{(i)}_j\}$, for the coordinate $z_j$, $j \in [s]$. This way, we have the index set $\mathcal{K}_i := \bigotimes_{j=1}^s \{0, 1, \ldots, K^{(i)}_{j}\}$ and grid points
\begin{equation}\label{gridpts_mix}
\left\{ \bsy^{(i)}_{\bsk} = ( y^{(i,1)}_{k_1}, y^{(i,2)}_{k_2}, \ldots, y^{(i,s)}_{k_s} ) \ : \ \bsk = (k_1, \ldots, k_s) \in \mathcal{K}_i \right\}
\end{equation}
according to \eqref{gridpts_general}. We approximate $\widetilde{\pi}^{(i)} \circ T_i^{-1}$ by
\begin{equation}\label{eq:phi_mix}
\widetilde{\varphi}^{(i)}_{\cK_i}(\bsz) = \sum_{\bsk \in \cK_i} c_{\bsk}^{(i)}  \varphi_{\bsk,i}(\bsz) \quad \mbox{ for $\bsz \in [-\bsa^{(i)},\bsa^{(i)}]$,}
\end{equation}
where 
\begin{equation}
    c_{\bsk}^{(i)}:=\left(\prod_{j=1}^s c_{k_j}^{(j)}\right)^{-1} \widetilde{\pi}^{(i)}(T_i^{-1}(\bsy_{\bsk}^{(i)}))\quad \text{and} \quad \varphi_{\bsk,i}(\bsz) := \left(\prod_{j=1}^s c_{k_j}^{(j)}\right) h_{\bsk,i}(\bsz).
\end{equation}
Here $\varphi_{\bsk,i}(\bsz)$ is the hat function $h_{\bsk,i}$ normalized to a probability density function and the normalizing constants are $c_{k_j}^{(j)}$ (see Appendix~\ref{sec:aux}, Eq. \eqref{norm:const} and \eqref{eq_phikj}). 

We have $\widetilde{\pi}^{(i)}(T_i^{-1}(\bsz)) \approx \widetilde{\varphi}^{(i)}_{\cK_i}(\bsz)$, and thus
\begin{equation}\label{def:ci}
c^{(i)} = \sum_{\bsk \in \cK_i} c_{\bsk}^{(i)} \approx \int_{B_i} \widetilde{\pi}^{(i)}(\bsx) \,\mathrm{d} \bsx.
\end{equation}

\begin{lemma}\label{le:5}
We assume the unnormalized target density $\pi$ and all of the functions $\psi_i$, $i \in \{1, \ldots, I\}$, are H\"{o}lder continuous with exponent $\beta \in (0,1]$. In addition, we assume the unnormalized target density $\pi$ and the function $\Psi$ satisfies 
\[
\sup_{\bsx \in D} \frac{\pi(\bsx)}{\Psi(\bsx)} = C_{\pi, \Psi} < \infty.
\]
Define the normalized approximation of $\overline\pi(\bsx)$ by
\begin{equation}\label{eq:mix_approx}
\overline\varphi(\bsx) = \frac1c \sum_{i = 1}^I \alpha_i \ \widetilde{\varphi}^{(i)}_{\cK_i}(T_i(\bsx)) \quad \mbox{ for }\ \bsx \in \bigcup_{i = 1}^{I} B_i, 
\end{equation}
where $$c = \sum_{i = 1}^I \alpha_i c^{(i)},$$ and where $\widetilde{\varphi}^{(i)}_{\cK_i}$ is defined in \eqref{eq:phi_mix}. Then we have
\begin{eqnarray*}
\lefteqn{\int_D |\overline\pi(\bsx) - \overline\varphi(\bsx) | \rd \bsx}\\
& \leq & 2 \varepsilon \int_D \frac{\overline\pi(\bsx)}{\Psi(\bsx)} \rd \bsx + \frac2z \sum_{i = 1}^I \alpha_i  \sum_{\bsk \in \cK_i^\ast} \prod_{j=1}^s (y_{k_j+1}^{(i,j)} - y_{k_j}^{(i,j)}) L_{\widetilde\pi^{(i)}}(\bsk) \, {\rm diam}(R^{(i)}_{\bsk})^{\beta},
\end{eqnarray*}
where $\cK^\ast_i := \bigotimes_{j=1}^s \{0, 1,\ldots, K^{(i)}_{j}-1\}$, where $R^{(i)}_{\bsk} := \bigotimes_{j=1}^s [y^{(i,j)}_{k_j},y^{(i,j)}_{k_j+1})$, corresponding to the grid points \eqref{gridpts_mix}, $z$ is the normalizing constant of the target density $\pi$, and $\varepsilon > 0$ is defined such that $\psi_i(\bsx) \le \varepsilon$ for all $\bsx \notin B_i$ and $i \in \{1,\ldots,I\}$.
\end{lemma}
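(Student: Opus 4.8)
The plan is to follow the normalization argument of Lemma~\ref{lem_pi_approx}: first reduce the claim to a bound on the \emph{unnormalized} $L^1$-error $\int_D |\pi - \varphi|\rd\bsx$, and then split that error into a truncation part (controlled by $\varepsilon$) and a per-component adaptive-approximation part (controlled by Lemma~\ref{le_adapt_pi_holder}). Here I would write the unnormalized approximation as $\varphi(\bsx) := \sum_{i=1}^I \alpha_i \widetilde{\varphi}^{(i)}_{\cK_i}(T_i(\bsx)) \mathbb{1}_{B_i}(\bsx)$, so that $\overline\varphi = \varphi/c$ with $c = \int_D \varphi(\bsx)\rd\bsx = \sum_{i=1}^I \alpha_i c^{(i)}$, using that each $T_i$ is a rotation composed with a translation and hence has unit Jacobian. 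The identity that makes everything fit together is the \emph{exact} decomposition $\pi = \sum_{i=1}^I \alpha_i \widetilde{\pi}^{(i)}$, immediate from $\widetilde{\pi}^{(i)} = \pi\psi_i/\Psi$ and $\Psi = \sum_{i=1}^I \alpha_i\psi_i$.

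First I would dispose of the normalization mismatch exactly as in Lemma~\ref{lem_pi_approx}. Inserting and subtracting $\varphi/z$ gives
$$\int_D |\overline\pi - \overline\varphi|\rd\bsx \le \frac1z \int_D |\pi - \varphi|\rd\bsx + \frac{|c-z|}{z},$$
because $\int_D \varphi\rd\bsx = c$. Since $c = \int_D\varphi\rd\bsx$ and $z = \int_D\pi\rd\bsx$, we have $|c-z| \le \int_D|\pi-\varphi|\rd\bsx$, so the whole left-hand side is at most $\frac2z \int_D |\pi - \varphi|\rd\bsx$. It is worth emphasizing that the global factor $2$ and the factor $1/z$ in the statement arise exactly once, from this single normalization step, and not separately for each mixture component.

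It then remains to estimate $\int_D |\pi - \varphi|\rd\bsx$. Using $\pi = \sum_i\alpha_i\widetilde{\pi}^{(i)}$ and the triangle inequality, I would bound it by $\sum_{i=1}^I \alpha_i \int_D |\widetilde{\pi}^{(i)} - \widetilde{\varphi}^{(i)}_{\cK_i}(T_i(\cdot))\mathbb{1}_{B_i}|\rd\bsx$ and split each summand over $D\setminus B_i$ and over $B_i$. On $D\setminus B_i$ the approximation vanishes, so the contribution is $\int_{D\setminus B_i}\pi\psi_i/\Psi\rd\bsx \le \varepsilon\int_D \pi/\Psi\rd\bsx = \varepsilon z\int_D \overline\pi/\Psi\rd\bsx$, using $\psi_i\le\varepsilon$ off $B_i$; after multiplying by $\frac2z\alpha_i$, summing, and using $\sum_i\alpha_i = 1$, this produces precisely the term $2\varepsilon\int_D \overline\pi/\Psi\rd\bsx$. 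On $B_i$, the change of variables $\bsz = T_i(\bsx)$ turns the contribution into $\int_{[-\bsa^{(i)},\bsa^{(i)}]} |\widetilde{\pi}^{(i)}(T_i^{-1}(\bsz)) - \widetilde{\varphi}^{(i)}_{\cK_i}(\bsz)|\rd\bsz$; since $\widetilde{\pi}^{(i)}\circ T_i^{-1}$ is H\"older by Proposition~\ref{mix_holder} (the $\ell_2$-norm being rotation invariant), Lemma~\ref{le_adapt_pi_holder} supplies the pointwise bound $L_{\widetilde{\pi}^{(i)}}(\bsk)\,{\rm diam}(R^{(i)}_{\bsk})^\beta$ on each cell $R^{(i)}_{\bsk}$, and integrating over that cell (of volume $\prod_{j=1}^s(y^{(i,j)}_{k_j+1}-y^{(i,j)}_{k_j})$) yields the sum over $\bsk\in\cK_i^\ast$ in the statement. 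Recombining both parts with the prefactor $\frac2z$ finishes the argument.

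The calculation is largely routine once the two earlier lemmas are in hand; the one thing to be careful about is the accounting. The hard part will be recognizing that the factor $2$ comes solely from the single estimate $|c-z|\le\int_D|\pi-\varphi|\rd\bsx$, and then tracking how the truncation errors of the individual components recombine --- via $\sum_{i=1}^I\alpha_i = 1$ and $\pi/\Psi = z\,\overline\pi/\Psi$ --- into the clean term $2\varepsilon\int_D\overline\pi/\Psi\rd\bsx$.
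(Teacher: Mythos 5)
Your proposal is correct and follows essentially the same route as the paper's proof: the normalization step with the single factor $2/z$ (via $|c-z|\le\int_D|\pi-\varphi|\rd\bsx$, exactly as in Lemma~\ref{lem_pi_approx}), the exact decomposition $\pi=\sum_{i=1}^I\alpha_i\widetilde{\pi}^{(i)}$, the split into an $\varepsilon$-truncation term off $B_i$ and a per-cell term on $B_i$ handled by the unit-Jacobian change of variables together with Proposition~\ref{mix_holder} and Lemma~\ref{le_adapt_pi_holder}. Your only deviations are cosmetic --- you carry the indicator $\mathbb{1}_{B_i}$ explicitly in the definition of $\varphi$ and split the domain directly rather than inserting $\widetilde{\pi}^{(i)}\mathbb{1}_{B_i}$ as an intermediate term via the triangle inequality, which yields the identical two integrals.
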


\begin{proof}
Using a proof similar to that of Lemma~\ref{lem_pi_approx}, we have
\[
\int_D |\overline\pi(\bsx) - \overline\varphi(\bsx) | \rd \bsx \leq \frac2z \int_D |\pi(\bsx) - \varphi(\bsx) | \rd \bsx ,
\]
where 
\[
\varphi(\bsx) = \sum_{i = 1}^I \alpha_i \ \widetilde{\varphi}^{(i)}_{\cK_i}(T_i(\bsx)) \quad \mbox{ for }\ \bsx \in \bigcup_{i = 1}^{I} B_i
\]
is the unnormalized version of $\overline\varphi(\bsx)$ and where the rotated intervals $B_i$ are defined in \eqref{rot:int}. Then, we consider the following bound on the unnormalized densities,
\begin{align*}
& \hspace{-12pt} \int_D |\pi(\bsx) - \varphi(\bsx) | \rd \bsx \\
& \leq \sum_{i = 1}^I \alpha_i \int_D |\widetilde\pi^{(i)}(\bsx) - \widetilde{\varphi}^{(i)}_{\cK_i}(T_i(\bsx)) | \rd \bsx \\
& \leq \sum_{i = 1}^I \alpha_i \left(\int_D |\widetilde\pi^{(i)}(\bsx) - \widetilde\pi^{(i)}(\bsx)\mathbb{1}_{B_i}(\bsx)  | \rd \bsx + \int_D |\widetilde\pi^{(i)}(\bsx)\mathbb{1}_{B_i}(\bsx)  - \widetilde{\varphi}^{(i)}_{\cK_i}(T_i(\bsx)) | \rd \bsx\right) \\
& = \sum_{i = 1}^I \alpha_i \left(\int_D \widetilde\pi^{(i)}(\bsx)|1 - \mathbb{1}_{B_i}(\bsx)  | \rd \bsx + \int_{[-\bsa^{(i)},\bsa^{(i)}]} |\widetilde\pi^{(i)}(T_i^{-1}(\bsz))  - \widetilde{\varphi}^{(i)}_{\cK_i}(\bsz) | \rd \bsz\right) \\
& = \sum_{i = 1}^I \alpha_i \int_D \frac{\pi(\bsx) \psi_i(\bsx)}{\Psi(\bsx)}|1 - \mathbb{1}_{B_i}(\bsx)  | \rd \bsx + \sum_{i = 1}^I \alpha_i  \int_{[-\bsa^{(i)},\bsa^{(i)}]} |\widetilde\pi^{(i)}(T_i^{-1}(\bsz))  - \widetilde{\varphi}^{(i)}_{\cK_i}(\bsz) | \rd \bsz.
\end{align*}
We estimate the two main parts separately. 

For the first term recall that $\psi_i(\bsx) \leq \varepsilon$ for all $\bsx\not\in B_i$ and $\sum_{i = 1}^I \alpha_i =1$. Then we have
\[
\sum_{i = 1}^I \alpha_i \int_D \frac{\pi(\bsx) \psi_i(\bsx)}{\Psi(\bsx)}|1 - \mathbb{1}_{B_i}(\bsx)  | \rd \bsx \leq \varepsilon \sum_{i = 1}^I \alpha_i \int_D \frac{\pi(\bsx)}{\Psi(\bsx)} \rd \bsx = \varepsilon \int_D \frac{\pi(\bsx)}{\Psi(\bsx)} \rd \bsx.
\]

In order to estimate the second part we can apply Lemma~\ref{le_adapt_pi_holder} for each $\widetilde{\varphi}^{(i)}_{\cK_i}(\bsz)$. Indeed, in Proposition \ref{mix_holder}, we established that $\widetilde\pi^{(i)}$ satisfies a H\"older condition of the form
\begin{equation*}%
|\widetilde\pi^{(i)}(\bsx)-\widetilde\pi^{(i)}(\bsy)| \le L_{\widetilde\pi^{(i)}} \, \|\bsx-\bsy\|_2^{\beta} \quad \mbox{ for all $\bsx, \bsy \in B_i$,}
\end{equation*}
or equivalently
\begin{eqnarray*}
|\widetilde\pi^{(i)}(T_i^{-1}(\bsz))-\widetilde\pi^{(i)}(T_i^{-1}(\bsz'))| & \le & L_{\widetilde\pi^{(i)}} \, \|T_i^{-1}(\bsz)-T_i^{-1}(\bsz')\|_2^{\beta}\nonumber\\
& = & L_{\widetilde\pi^{(i)}} \, \|\bsz-\bsz'\|_2^{\beta} \quad \mbox{ for all $\bsz, \bsz' \in [-\bsa^{(i)},\bsa^{(i)}]$.}
\end{eqnarray*}
Now we consider a local version
\begin{equation*}
|\widetilde\pi^{(i)}(T_i^{-1}(\bsz))-\widetilde\pi^{(i)}(T_i^{-1}(\bsz'))| \le L_{\widetilde\pi^{(i)}}(\bsk) \, \|\bsz-\bsz'\|_2^{\beta} \quad \mbox{ for all $\bsz, \bsz' \in R^{(i)}_{\bsk}$,}
\end{equation*}
for all $\bsk\in \cK^\ast_i$. Then, for $\bsz \in R^{(i)}_{\bsk}$ we have
$$|\widetilde\pi^{(i)}(T_i^{-1}(\bsz)) - \widetilde\varphi^{(i)}_{\cK_i}(\bsz) | \le L_{\widetilde\pi^{(i)}}(\bsk) \, {\rm diam}(R^{(i)}_{\bsk})^{\beta}.$$
This leads to 
\[
\int_{[-\bsa^{(i)},\bsa^{(i)}]} |\widetilde\pi^{(i)}(T_i^{-1}(\bsz))  - \widetilde{\varphi}^{(i)}_{\cK_i}(\bsz) | \rd \bsz \leq 
\sum_{\bsk \in \cK_i^\ast} \prod_{j=1}^s (y_{k_j+1}^{(j)} - y_{k_j}^{(j)}) L_{\widetilde\pi^{(i)}}(\bsk) \, {\rm diam}(R^{(i)}_{\bsk})^{\beta}.
\]
In summary, we have the bound
\begin{align*}
\lefteqn{\int_D |\overline\pi(\bsx) - \overline\varphi(\bsx) | \rd \bsx}\\
& \le \frac{2}{z}\left(\varepsilon \int_D \frac{\pi(\bsx)}{\Psi(\bsx)} \rd \bsx + \sum_{i = 1}^I \alpha_i \sum_{\bsk \in \cK_i^\ast} \prod_{j=1}^s (y_{k_j+1}^{(j)} - y_{k_j}^{(j)}) L_{\widetilde\pi^{(i)}}(\bsk) \, {\rm diam}(R^{(i)}_{\bsk})^{\beta}  \right)\\
& = 2 \varepsilon \int_D \frac{\overline\pi(\bsx)}{\Psi(\bsx)} \rd \bsx + \frac2z \sum_{i = 1}^I \alpha_i  \sum_{\bsk \in \cK_i^\ast} \prod_{j=1}^s (y_{k_j+1}^{(j)} - y_{k_j}^{(j)}) L_{\widetilde\pi^{(i)}}(\bsk) \, {\rm diam}(R^{(i)}_{\bsk})^{\beta},
\end{align*}
which concludes the proof.
\end{proof}

Using each component of the approximate density \eqref{eq:mix_approx}, now we define for every $i \in \{1,\ldots,I\}$ the algorithm 
\begin{align}\label{alg:i}
\mathcal{A}^{(i)}_{\cK_i}(f \circ T_i^{-1}) = &  \sum_{\bsk \in \cK_i} \frac{c_{\bsk}^{(i)} }{N_{\bsk}^{(i)}} \sum_{n=0}^{N_{\bsk}^{(i)}-1} f(T_i^{-1}(\Phi^{-1}_{\bsk,i}(\bsy_n))) \nonumber \\ \approx & \sum_{\bsk \in \cK_i} c_{\bsk}^{(i)}  \int_{B_i} f(\bsx) \varphi_{\bsk, i}(T_i(\bsx)) \rd \bsx, \nonumber %
\end{align}
where $\Phi^{-1}_{\bsk, i}$ is the inverse CDF of the normalized hat function $h_{\bsk,i}$, $(\bsy_n)_{n \ge 0}$ is a QMC sequence in the unit cube $[0,1]^s$ and $N_{\bsk}^{(i)} \in \mathbb{N}$ for $\bsk \in \mathcal{K}_i$. Combining all components of \eqref{eq:mix_approx} together, the final combined approximation algorithm is now of the form
\begin{eqnarray}\label{def:alg2}
\mathcal{A}_{\bscK}(f) & = & \frac1c \sum_{i=1}^I \alpha_i \mathcal{A}^{(i)}_{\cK_i}(f \circ T_i^{-1}) \nonumber\\
& = & \frac1c  \sum_{i=1}^I \alpha_i \sum_{\bsk \in \cK_i}  \frac{c_{\bsk}^{(i)}}{N_{\bsk}^{(i)}} \sum_{n=0}^{N_{\bsk}^{(i)}-1} f(T_i^{-1}(\Phi^{-1}_{\bsk,i}(\bsy_n))),
\end{eqnarray}
where \(c = \sum_{i = 1}^I \alpha_i c^{(i)}\), $\bscK = (\cK_1, \cK_2, \ldots, \cK_I)$, and $\sum_{i = 1}^I \alpha_i = 1$.

To apply the combined algorithm, we proceed in the following way. We approximate the target density by choosing $\alpha_i$ and $\psi_i$ for $i \in \{1,\ldots,I\}$. Then we apply the transformation $T_i$ and choose a bounded interval $[-\bsa^{(i)}, \bsa^{(i)}]$ with $\bsa^{(i)} = a \surd \blambda_i$ for some $a>0$, where $\blambda_i$ is the vector of eigenvalues of $\bSigma_i$, in the transformed coordinates such that the function $\psi_i$ is bounded by $\varepsilon$ outside the rotated interval $B_i$ (see \eqref{rot:int}) in the original coordinates for some $\varepsilon>0$ that is chosen to be sufficiently small.  We use an adaptive hat function approximation on $[-\bsa^{(i)}, \bsa^{(i)}]$ to approximate $\widetilde\pi_i\circ T_i^{-1}$. In the numerical examples in Section~\ref{sec:numerical} we choose the grid points using an adaptive refinement strategy, see the last lines of Section~\ref{sec:3.2} for more details. %
The numbers $(N_{\bsk}^{(i)})_{\bsk \in \cK_i}$ are chosen according to the weight given to the corresponding hat function (normalized to a PDF). 

The following theorem establishes a bound on the integration error of the algorithm \eqref{def:alg2}, which holds for arbitrary grids. The three parts of the error bound reflect the steps of the procedure of applying the combined algorithm, explained in the previous paragraph.

\begin{theorem}
Let the unnormalized target density  $\pi:D \rightarrow \RR_0^+$ and each of the functions $\psi_i$, $i \in \{1, \ldots, I\}$ satisfy all assumptions of Proposition~\ref{mix_holder}. Given the general weights $\bsgamma=\{\gamma_{\uu}\}_{\uu \subseteq [s]}$ and the corresponding order-dependent weights
\begin{equation}\label{ord:dep:w}
\bsGamma := (\Gamma_v)_{v=1}^s, \quad \Gamma_v := \Big( \sum_{\stackrel{\uu \subseteq [s]}{|\uu| = v}} \frac{1}{\gamma_{\uu}^p} \Big)^{-1/p},
\end{equation} 
we assume that $f:D \rightarrow \RR$ has a bounded $\bsGamma$-weighted $p$-norm with $p \in [1,\infty]$ in the form of 
\begin{equation}\label{def:norm17}
\|f\|_{p,s,\bsGamma} := \left(  \sum_{v=0}^s \left( \frac{1}{\Gamma_v} \sup_{\bsx \in D} \sum_{j_1, j_2, \ldots, j_{v}=1}^s \left| \frac{\partial^{v} f}{\partial x_{j_1} \ldots \partial x_{j_{v}}}(\bsx ) \right| \right)^p  \right)^{1/p} < \infty.
\end{equation}
Let $\varepsilon >0$. For $i \in \{1,\ldots,I\}$ assume that $\bsa^{(i)}$ is such that $\psi_i$ is bounded by $\varepsilon$ outside the rotated interval $B_i=T_i^{-1}([-\bsa^{(i)},\bsa^{(i)}])$ and use an adaptive hat function approximation on $[-\bsa^{(i)},\bsa^{(i)}]$ to approximate $\widetilde\pi_i\circ T_i^{-1}$ with index set $\mathcal{K}_i := \bigotimes_{j=1}^s \{0, 1, \ldots, K^{(i)}_{j}\}$ and grid points
\begin{equation*}
\left\{ \bsy^{(i)}_{\bsk} = ( y^{(i,1)}_{k_1}, y^{(i,2)}_{k_2}, \ldots, y^{(i,s)}_{k_s} ) \ : \ \bsk = (k_1, \ldots, k_s) \in \mathcal{K}_i \right\}.
\end{equation*} 
Furthermore, choose numbers $(N_{\bsk}^{(i)})_{\bsk \in \cK_i}$. Then for the algorithm $\mathcal{A}_{\bscK}$ in \eqref{def:alg2} based on a digital $(t,s)$-sequence $(\bsy_n)_{n \ge 0}$ over $\FF_2$ with non-singular upper triangular generating matrices we have
\begin{eqnarray*}
\lefteqn{\left|\int_D f(\bsx) \overline\pi(\bsx) \rd \bsx - \mathcal{A}_{\bscK}(f) \right|}\\
& \le &  2 \|f\|_{L_\infty} \left( \varepsilon \int_D \frac{\overline\pi(\bsx)}{\Psi(\bsx)} \rd \bsx + \frac1z \sum_{i = 1}^I \alpha_i  \sum_{\bsk \in \cK_i^\ast} \prod_{j=1}^s (y_{k_j+1}^{(j)} - y_{k_j}^{(j)}) L_{\widetilde\pi^{(i)}}(\bsk) \, {\rm diam}(R^{(i)}_{\bsk})^{\beta} \right) \\
&&+ \frac{\|f\|_{p,s,\bsGamma}}c \sum_{i=1}^I \frac{\alpha_i\, c^{(i)} \, (\delta+3 r^{(i)}-2)}{N^{(i)}} \left[1 + 5 \cdot 2^t \log(2 N^{(i)})\, G_{\bsgamma,q,-\bsa^{(i)},\bsa^{(i)}}(N^{(i)})\right].
\end{eqnarray*}
Here, for $i \in \{1,\ldots,I\}$, $c^{(i)}$ is defined in \eqref{def:ci}, $r^{(i)}=r^{(i)}(\delta,N^{(i)})$ is the quantity from \eqref{def:r}, $N^{(i)} = \sum_{\bsk\in\cK_i} N_{\bsk}^{(i)}$, and $G_{\bsgamma,q,-\bsa^{(i)}\bsa^{(i)}}$ is defined in \eqref{def:Gab} with $q \ge 1$ such that $1/p+1/q=1$, $z$ is the normalizing constant of the target density $\pi$, and $c = \sum_{i=1}^I \alpha_i c^{(i)}$.
\end{theorem}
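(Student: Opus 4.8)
The plan is to split the error by the triangle inequality into an \emph{approximation error}, where the target density $\overline\pi$ is replaced by its normalized hat-function approximation $\overline\varphi$ from \eqref{eq:mix_approx}, and a \emph{quadrature error}, where the integral of $f\overline\varphi$ is replaced by the algorithm $\mathcal{A}_{\bscK}(f)$:
\[
\int_D f(\bsx)\overline\pi(\bsx)\rd\bsx - \mathcal{A}_{\bscK}(f) = \int_D f(\bsx)(\overline\pi(\bsx)-\overline\varphi(\bsx))\rd\bsx + \left(\int_D f(\bsx)\overline\varphi(\bsx)\rd\bsx - \mathcal{A}_{\bscK}(f)\right).
\]
The first (approximation) term is immediate: bounding $|f|\le\|f\|_{L_\infty}$ and invoking Lemma~\ref{le:5} yields exactly the first two summands of the claimed bound, namely $2\|f\|_{L_\infty}$ times $\varepsilon\int_D \overline\pi/\Psi\,\rd\bsx$ plus the diameter sum $\frac1z\sum_i\alpha_i\sum_{\bsk\in\cK_i^\ast}\cdots$.

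For the quadrature term I would exploit the linearity of both $\overline\varphi$ and $\mathcal{A}_{\bscK}$ over the $I$ mixture components. Since $\overline\varphi(\bsx)=\frac1c\sum_{i}\alpha_i\widetilde\varphi^{(i)}_{\cK_i}(T_i(\bsx))$ and $\mathcal{A}_{\bscK}(f)=\frac1c\sum_i\alpha_i\mathcal{A}^{(i)}_{\cK_i}(f\circ T_i^{-1})$, the quadrature error equals $\frac1c\sum_i\alpha_i$ times the per-component error $\int_D f\,\widetilde\varphi^{(i)}_{\cK_i}(T_i(\cdot))\,\rd\bsx - \mathcal{A}^{(i)}_{\cK_i}(f\circ T_i^{-1})$. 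Applying the unitary change of variables $\bsz=T_i(\bsx)$ (which has Jacobian determinant one and maps $B_i$ onto $[-\bsa^{(i)},\bsa^{(i)}]$), this per-component error becomes the error of the mixture algorithm of Section~\ref{sec:QMCmix} applied to $g_i:=f\circ T_i^{-1}$ against the hat-function mixture $\widetilde\varphi^{(i)}_{\cK_i}$, which is of the form \eqref{tensform} with total mass $c^{(i)}$. Writing $\widetilde\varphi^{(i)}_{\cK_i}=c^{(i)}\,\overline{\widetilde\varphi}^{(i)}_{\cK_i}$ and noting $\mathcal{A}^{(i)}_{\cK_i}(g_i)=c^{(i)}\mathcal{A}_{N^{(i)},s}(g_i)$ in the notation of \eqref{alg_A}, I would apply Theorem~\ref{thm:main2} on $[-\bsa^{(i)},\bsa^{(i)}]$ to bound this by $c^{(i)}\|g_i\|_{p,s,\bsgamma}\frac{\delta+3r^{(i)}-2}{N^{(i)}}[1+5\cdot2^t\log(2N^{(i)})G_{\bsgamma,q,-\bsa^{(i)},\bsa^{(i)}}(N^{(i)})]$.

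The main obstacle, and the step that motivates the order-dependent weights $\bsGamma$, is to replace $\|g_i\|_{p,s,\bsgamma}$ by $\|f\|_{p,s,\bsGamma}$. Here one uses that $T_i^{-1}(\bsz)=\bU_i\bsz+\bmu_i$ is affine with unitary matrix $\bU_i$, so $\partial x_k/\partial z_m=(\bU_i)_{k,m}$, and the chain rule gives, for $\uu=\{u_1,\ldots,u_v\}$,
\[
\frac{\partial^{v} g_i}{\partial \bsz_\uu}(\bsz)=\sum_{j_1,\ldots,j_v=1}^s (\bU_i)_{j_1,u_1}\cdots(\bU_i)_{j_v,u_v}\,\frac{\partial^{v} f}{\partial x_{j_1}\cdots\partial x_{j_v}}(T_i^{-1}(\bsz)).
\]
Since the columns of a unitary matrix are orthonormal, $|(\bU_i)_{j,m}|\le1$, and hence $\sup_{\bsz}\bigl|\partial^v g_i/\partial\bsz_\uu\bigr|\le S_v$, where $S_v:=\sup_{\bsx\in D}\sum_{j_1,\ldots,j_v=1}^s \bigl|\frac{\partial^{v} f}{\partial x_{j_1}\cdots\partial x_{j_v}}(\bsx)\bigr|$ is exactly the quantity in \eqref{def:norm17} and depends only on $v=|\uu|$. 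Raising to the $p$-th power, summing over $\uu$, and grouping by cardinality yields
\[
\|g_i\|_{p,s,\bsgamma}^p\le\sum_{v=0}^s S_v^p\sum_{\stackrel{\uu\subseteq[s]}{|\uu|=v}}\gamma_\uu^{-p}=\sum_{v=0}^s\left(\frac{S_v}{\Gamma_v}\right)^p=\|f\|_{p,s,\bsGamma}^p,
\]
using the definition \eqref{ord:dep:w} of $\Gamma_v$, so that $\|g_i\|_{p,s,\bsgamma}\le\|f\|_{p,s,\bsGamma}$ uniformly in $i$.

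Substituting this norm bound into the per-component estimate, forming $\frac1c\sum_i\alpha_i c^{(i)}(\cdots)$, and adding the approximation error gives precisely the three-part bound in the statement. The only remaining points requiring care are the existence and symmetry of the mixed partials of $f$ needed for the chain rule (guaranteed by $\|f\|_{p,s,\bsGamma}<\infty$), and verifying that the set of indices $\bsk$ with $N^{(i)}_{\bsk}\ge1$ in $\mathcal{A}^{(i)}_{\cK_i}$ plays the role of the index set $\cL_\delta$ used in Theorem~\ref{thm:main2}, so that the Diophantine-approximation bookkeeping leading to the factor $\delta+3r^{(i)}-2$ carries over verbatim.
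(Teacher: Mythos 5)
Your proposal is correct and follows essentially the same route as the paper's proof: the same triangle-inequality split with Lemma~\ref{le:5} for the approximation part, the same component-wise reduction via the unitary change of variables followed by the QMC error bound (the paper invokes the second half of Theorem~\ref{thm_adaptive_hats}, which rests on Theorem~\ref{thm:main2} exactly as you apply it), and the identical chain-rule/unitarity computation giving $\|g_i\|_{p,s,\bsgamma}\le\|f\|_{p,s,\bsGamma}$ via the grouping by $|\uu|=v$ and the definition of $\Gamma_v$. Your closing remark that the indices $\bsk$ with $N^{(i)}_{\bsk}\ge 1$ must play the role of $\cL_\delta$ correctly flags a bookkeeping point the paper leaves implicit.
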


\begin{proof}
We have 
\begin{align}\label{eq1_thm4}
\lefteqn{\left|\int_D f(\bsx) \overline\pi(\bsx) \rd \bsx - \mathcal{A}_{\bscK}(f) \right|} \nonumber\\
& \leq \left|\int_D f(\bsx) ( \overline\pi(\bsx) - \overline\varphi(\bsx) ) \rd \bsx \right| + \left|\int_D f(\bsx) \overline\varphi(\bsx) \rd \bsx - \mathcal{A}_{\bscK}(f) \right|.
\end{align}

For the first part on the right-hand side, we can use Lemma~\ref{le:5} in order to obtain 
\begin{align*}
\lefteqn{\left|\int_D f(\bsx) ( \overline\pi(\bsx) - \overline\varphi(\bsx) ) \rd \bsx \right|}\\
& \leq \|f\|_{L_\infty} \int_D |\overline\pi(\bsx) - \overline\varphi(\bsx) | \rd \bsx \\
& \leq \|f\|_{L_\infty} \left(2 \varepsilon \int_D \frac{\overline\pi(\bsx)}{\Psi(\bsx)} \rd \bsx + \frac2z \sum_{i = 1}^I \alpha_i  \sum_{\bsk \in \cK_i^\ast} \prod_{j=1}^s (y_{k_j+1}^{(j)} - y_{k_j}^{(j)}) L_{\widetilde\pi^{(i)}}(\bsk) \, {\rm diam}(R^{(i)}_{\bsk})^{\beta} \right).
\end{align*}

Now we estimate the second part. We have
\begin{align}\label{bound_gen_case}
\lefteqn{\left|\int_D f(\bsx) \overline\varphi(\bsx) \rd \bsx - \mathcal{A}_{\bscK}(f) \right|} \nonumber \\
& = \left|\frac{1}{c}\sum_{i=1}^I \alpha_i\int_D f(\bsx) \widetilde{\varphi}^{(i)}_{\cK_i}(T_i(\bsx)) \rd \bsx - \frac{1}{c}\sum_{i=1}^I \alpha_i \mathcal{A}_{\cK_i}^{(i)}(f \circ T_i^{-1}) \right| \nonumber \\
& =  \left|\frac{1}{c}\sum_{i=1}^I \alpha_i\int_{[-\bsa^{(i)},\bsa^{(i)}]} f(T_i^{-1}(\bsz)) \widetilde{\varphi}^{(i)}_{\cK_i}(\bsz) \rd \bsz - \frac{1}{c}\sum_{i=1}^I \alpha_i \mathcal{A}_{\cK_i}^{(i)}(f \circ T_i^{-1}) \right| \nonumber  \\ 
& \le \frac{1}{c} \sum_{i=1}^I \alpha_i \left|\int_{[-\bsa^{(i)},\bsa^{(i)}]} g_i(\bsz) \widetilde{\varphi}^{(i)}_{\cK_i}(\bsz) \rd \bsz - \mathcal{A}_{\cK_i}^{(i)}(g_i) \right| \nonumber \\
& = \frac{1}{c} \sum_{i=1}^I \alpha_i c^{(i)} \left|\int_{[-\bsa^{(i)},\bsa^{(i)}]} g_i(\bsz) \left(\frac{1}{c^{(i)}}\widetilde{\varphi}^{(i)}_{\cK_i}(\bsz) \right) \rd \bsz - \frac{1}{c^{(i)}} \mathcal{A}_{\cK_i}^{(i)}(g_i) \right|
\end{align}
where $g_i = f\circ T_i^{-1}$. Applying the second half of Theorem~\ref{thm_adaptive_hats}, we have
\begin{align}\label{bd:erraigif}
\lefteqn{\left|\int_{[-\bsa^{(i)},\bsa^{(i)}]} g_i(\bsz) \left( \frac{1}{c^{(i)}} \widetilde{\varphi}^{(i)}_{\cK_i}(\bsz) \right) \rd \bsz - \frac{1}{c^{(i)}}\mathcal{A}_{\cK_i}^{(i)}(g_i) \right|} \nonumber \\
& \leq \|g_i\|_{p,s,\bsgamma}  \frac{\delta+3 r^{(i)}-2}{N^{(i)}} \left[1 + 5 \cdot 2^t \log(2 N^{(i)})  G_{\bsgamma,q,-\bsa^{(i)},\bsa^{(i)}}(N^{(i)})\right], 
\end{align}
where $N^{(i)} = \sum_{\bsk\in\cK_i} N_{\bsk}^{(i)}$ and where 
\begin{equation*}
\|g_i\|_{p,s,\bsgamma}=\left( \sum_{\uu \subseteq [s]} \left( \frac{1}{\gamma_\uu} \sup_{\bsz \in [-\bsa^{(i)}, \bsa^{(i)}]} \left | \frac{\partial^{|\uu|} g_i}{\partial \bsz_\uu} (\bsz)  \right | \right)^p \right)^{1/p}.
\end{equation*}
Now we show how to bound this norm uniformly for $i \in \{1,\ldots,I\}$ by the norm \eqref{def:norm17} of $f$.

For $\uu = \{ u_1, u_2, \ldots, u_{v}\} \subseteq [s]$, where $v=|\uu|$, we have
\begin{align*}
\frac{\partial^{v} g_i}{\partial \bsz_\uu}(\bsz) =  \sum_{j_1, j_2, \ldots, j_{v}=1}^s \frac{\partial^{v} f}{\partial x_{j_1} \dots \partial x_{j_{v}}}( \bsx)\, a^{(i)}_{j_1, u_1} \cdots a^{(i)}_{j_{v}, u_{v}},
\end{align*}
where $\bsx = \bU_i \bsz + \bmu_i$, and $a_{j,u}^{(i)}$ is the element in row $j$ and column $u$ of $\bU_i$. We can bound $|a^{(i)}_{j_1, u_1} \cdots a^{(i)}_{j_{v}, u_{v}}|$ by $1$, because $\bU_i$ is a unitary matrix and hence we have
\[
|a^{(i)}_{j,u}|^2 \le \sum_{k=1}^s |a^{(i)}_{j,k}|^2 =1 \quad \mbox{for all $j,u \in [s]$}.
\]
Therefore, for any subset $\emptyset \neq \uu \subseteq [s]$ with $|\uu| = v$ we have
\begin{align*}
\left| \frac{\partial^{v} g_i}{\partial \bsz_\uu}(\bsz) \right| \le  \sum_{j_1, j_2, \ldots, j_{v}=1}^s \left| \frac{\partial^{v} f}{\partial x_{j_1} \cdots \partial x_{j_{v}}}(T_i^{-1}(\bsz)) \right|.
\end{align*}
For $\uu = \emptyset$ we have $v=0$ and we interpret the above inequality as $|g_i(\bsz)| = |f(T^{-1}_i(\bsz))|$. Thus
\begin{align*}
\|g_i\|_{p,s,\bsgamma}^p = & \sum_{\uu \subseteq [s]} \left(\frac{1}{\gamma_{\uu}} \sup_{\bsz \in [-\bsa^{(i)}, \bsa^{(i)}]} \left|\frac{\partial^{|\uu|} g_i}{\partial \bsz_{\uu}}(\bsz) \right| \right)^p \\ 
\le &  \sum_{\uu \subseteq [s]} \left(\frac{1}{\gamma_\uu} \sup_{\bsx \in D} \sum_{j_1, j_2, \ldots, j_{|\uu|}=1}^s \left| \frac{\partial^{|\uu|} f}{\partial x_{j_1} \ldots \partial x_{j_{|\uu|}}}(\bsx) \right| \right)^p \\ 
= & \sum_{v=0}^s \sum_{\stackrel{\uu \subseteq [s]}{|\uu| = v}} \frac{1}{\gamma_\uu^p} \left( \sup_{\bsx \in D} \sum_{j_1, j_2, \ldots, j_{v}=1}^s \left| \frac{\partial^{v} f}{\partial x_{j_1} \ldots \partial x_{j_{v}}}(\bsx ) \right| \right)^p.
\end{align*}
Using the norm \eqref{def:norm17} now gives $\|g_i\|_{p,s,\bsgamma} \le \|f\|_{p,s, \bsGamma}$. Applying this estimate to \eqref{bd:erraigif} gives 
\begin{align*}
\lefteqn{\left|\int_{[-\bsa^{(i)},\bsa^{(i)}]} g_i(\bsz) \left(\frac{\widetilde{\varphi}^{(i)}_{\cK_i}(\bsz)}{c^{(i)}} \right) \rd \bsz - \frac{1}{c^{(i)}}\mathcal{A}_{N^{(i)},s}(g_i) \right|} \\
& \leq  \|f\|_{p,s,\bsGamma} \frac{\delta+3 r^{(i)}-2}{N^{(i)}} \left[1 + 5 \cdot 2^t \log(2 N^{(i)})  G_{\bsgamma,q,-\bsa^{(i)},\bsa^{(i)}}(N^{(i)})\right]
\end{align*}
and this gives the upper bound
\begin{align*}
\lefteqn{\left|\int_D f(\bsx) \overline\varphi(\bsx) \rd \bsx - \mathcal{A}_{\bsN,\bscK}(f) \right|} \nonumber \\
& \le \frac{1}{c} \|f\|_{p,s,\bsGamma} \sum_{i=1}^I \alpha_i c^{(i)} \frac{\delta+3 r^{(i)}-2}{N^{(i)}} \left[1 + 5 \cdot 2^t \log(2 N^{(i)})  G_{\bsgamma,q,-\bsa^{(i)},\bsa^{(i)}}(N^{(i)})\right].
\end{align*}
This leads to a bound on the last term of \eqref{eq1_thm4}, and thus the result follows. 
\end{proof}

\begin{remark}\rm
If we use unitary matrices $\bU_i$, then the required conditions on $f$ are stronger, because we require higher order derivatives. The weights in the norm of $f$ are now order-dependent weights. If the integrand $f$ does not have higher order derivatives or has some properties with respect to the weights which not only depend on $|\uu|$, then using the linear transformation may be a disadvantage. In such a case one may be better off setting $\bU_i = \bI$, the identity matrix.
\end{remark}

\section{Numerical experiments}\label{sec:numerical}

In the following, we perform numerical tests for two problems. In each instance, the underlying QMC point set is derived from a Sobol sequence as implemented in Matlab.

\subsection{Two-dimensional case}
We first consider a two-dimensional test case on the interval 
$[-5, 5]^2$, in which the unnormalized density $\pi$ is defined as
\begin{align*}
\pi(\bsx) = & \exp\left( - \|\bsx\|_2^2 -\frac2\sigma \left[ \left(\frac32 -\frac23 x_1 \right)^2 + 50\left(- \left(\frac23 x_1 - \frac12\right)^2 + x_2 - \frac12 \right)^2 \right.\right.  \\
& \hspace{3.2cm}\left. \left. + \left(\frac32 + \frac23 x_1 \right)^2 + 50\left(- \left( \frac23 x_1 + \frac12\right)^2 - x_2 - \frac12 \right)^2 \right] \right).
\end{align*}
As shown by the contours of $\pi$ in the top left plot of Fig.~\ref{fig:twod}, the target density concentrates on local regions and has nonlinear interaction between coordinates $x_1$ and $x_2$. We apply the adaptive hat function approximation (cf. Section~\ref{sec:3.2}) and the combined algorithm (cf. Section~\ref{sec:5.3}) to approximate $\pi$, and then test the convergence of the resulting QMC integration rules using Genz functions \cite{Genz84}. We consider the following instances of Genz functions
\begin{align*}
f_1(\bsx) = & \prod_{j=1}^s \left( \frac{1}{c_j^2} + \left(\frac{x_j+5}{10} + w_j\right)^2\right)^{-1}  ,\\
f_2(\bsx) = & \left(1 + \sum_{j=1}^s c_j \frac{x_j+5}{10}\right)^{-s-1},\\
f_3(\bsx) = & \exp\left( - \sum_{j=1}^s c_j \left|\frac{x_j+5}{10}-w_j\right| \right),
\end{align*}
for some constant vectors $\bsc, \bsw \in \RR^s$. Note that the original Genz functions are defined in the hypercube $[0,1]^s$ but here we have the interval $[-5, 5]^2$. So we rescale the above definitions to make them equivalent to the original ones. In this example, we set the constants to be $\bsc = (0.3, 0.6)$ and $\bsw = (0.25, 0.7)$.

\begin{figure}
\centering
\includegraphics[width=0.49\linewidth]{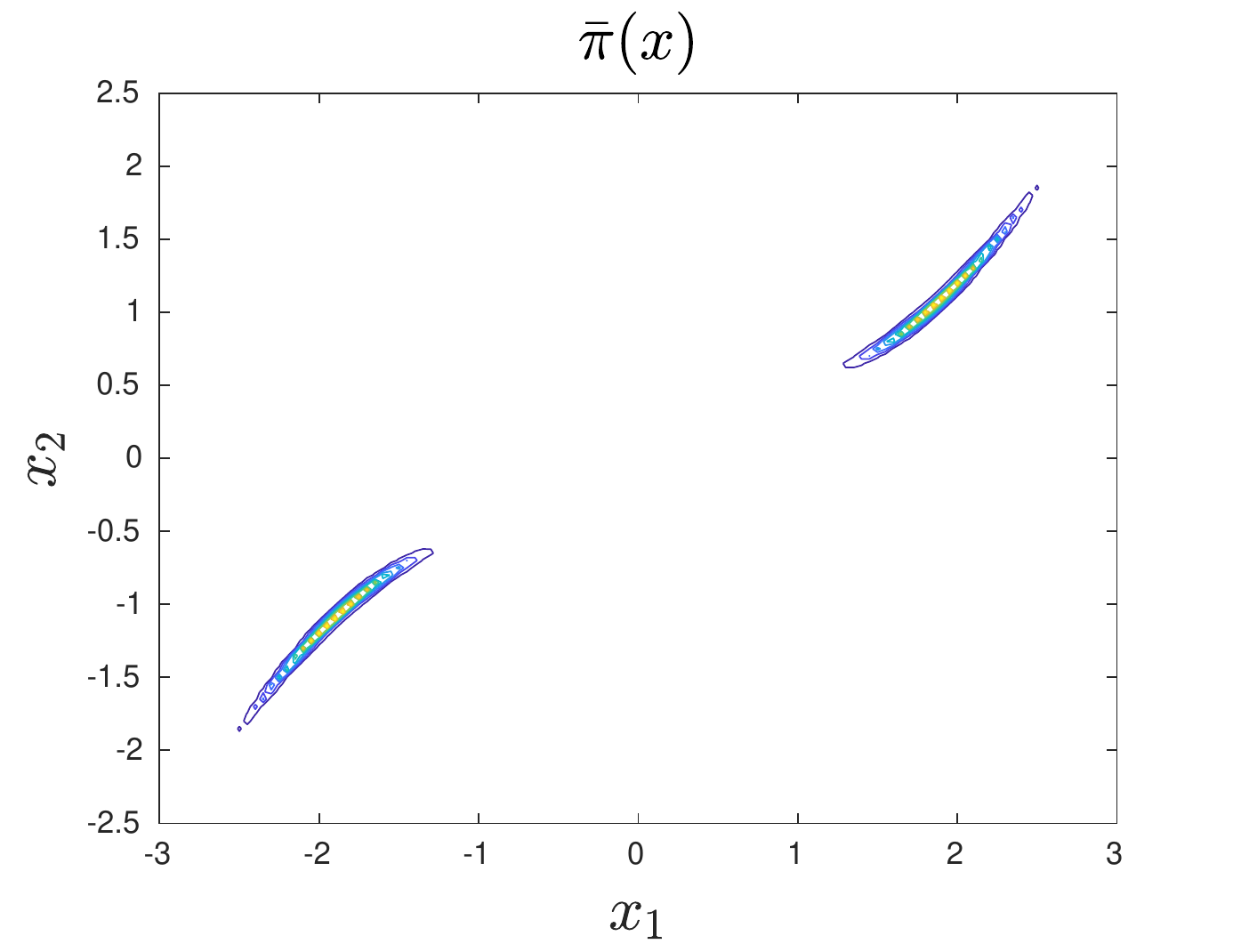}
\includegraphics[width=0.49\linewidth]{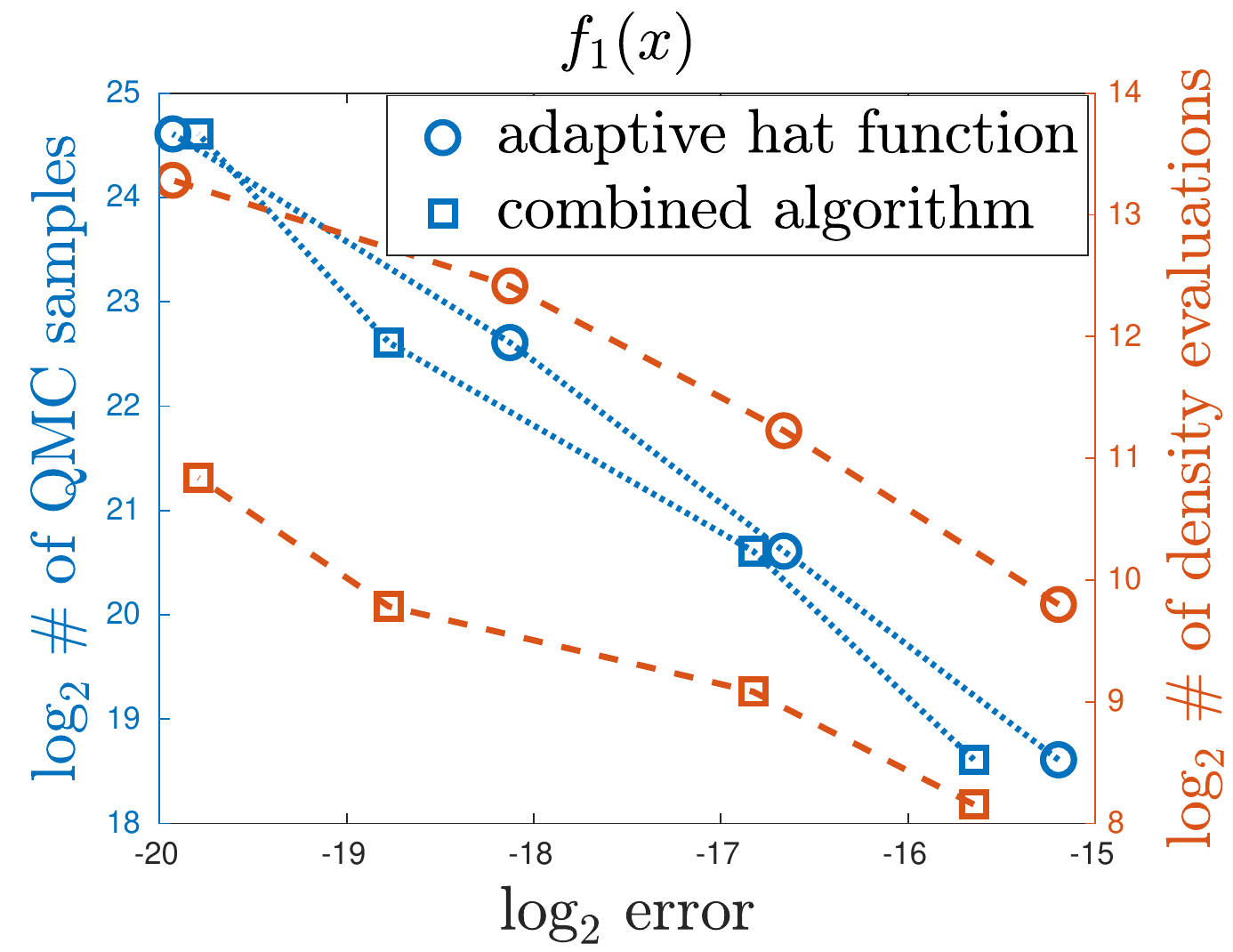}\\
\includegraphics[width=0.49\linewidth]{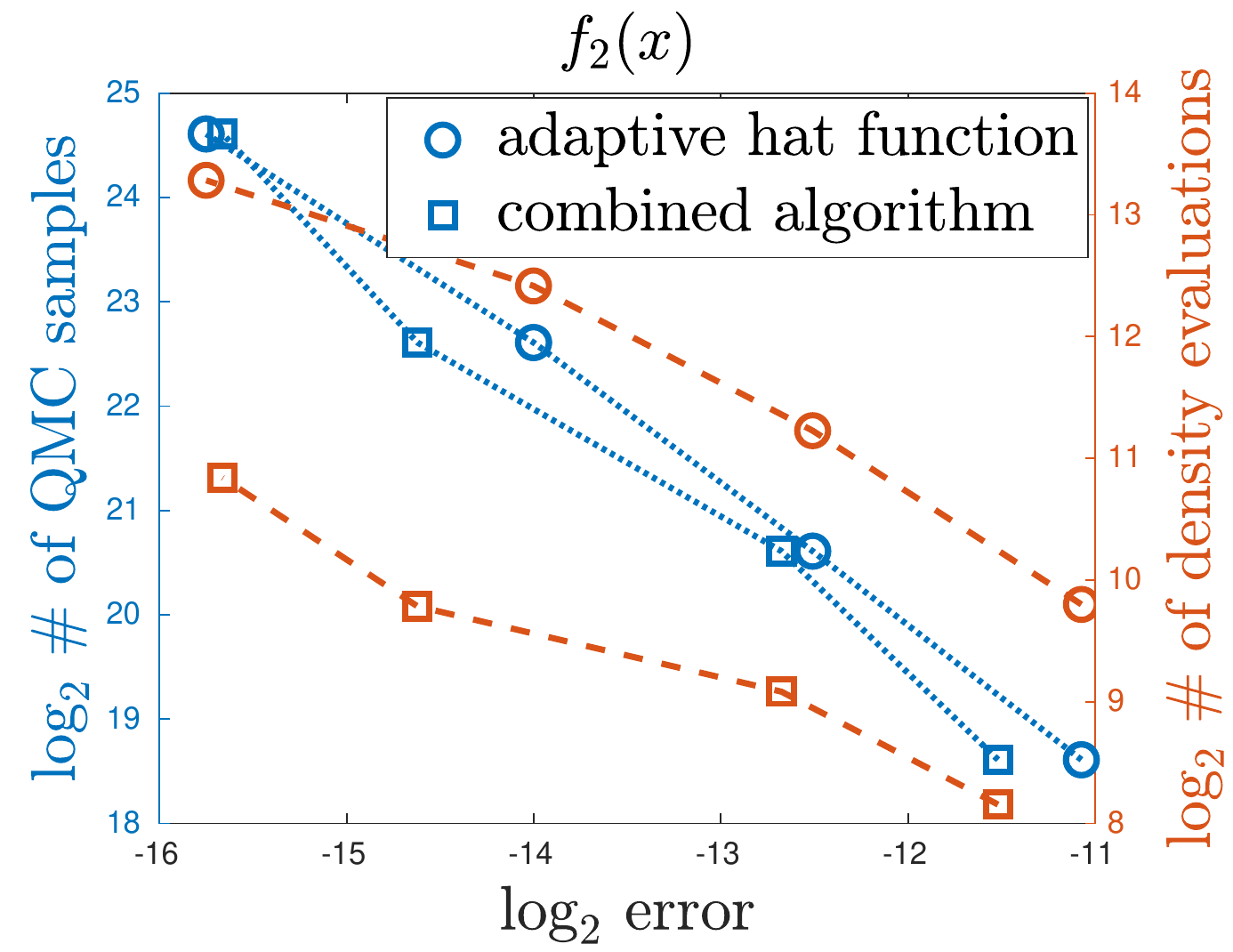}
\includegraphics[width=0.49\linewidth]{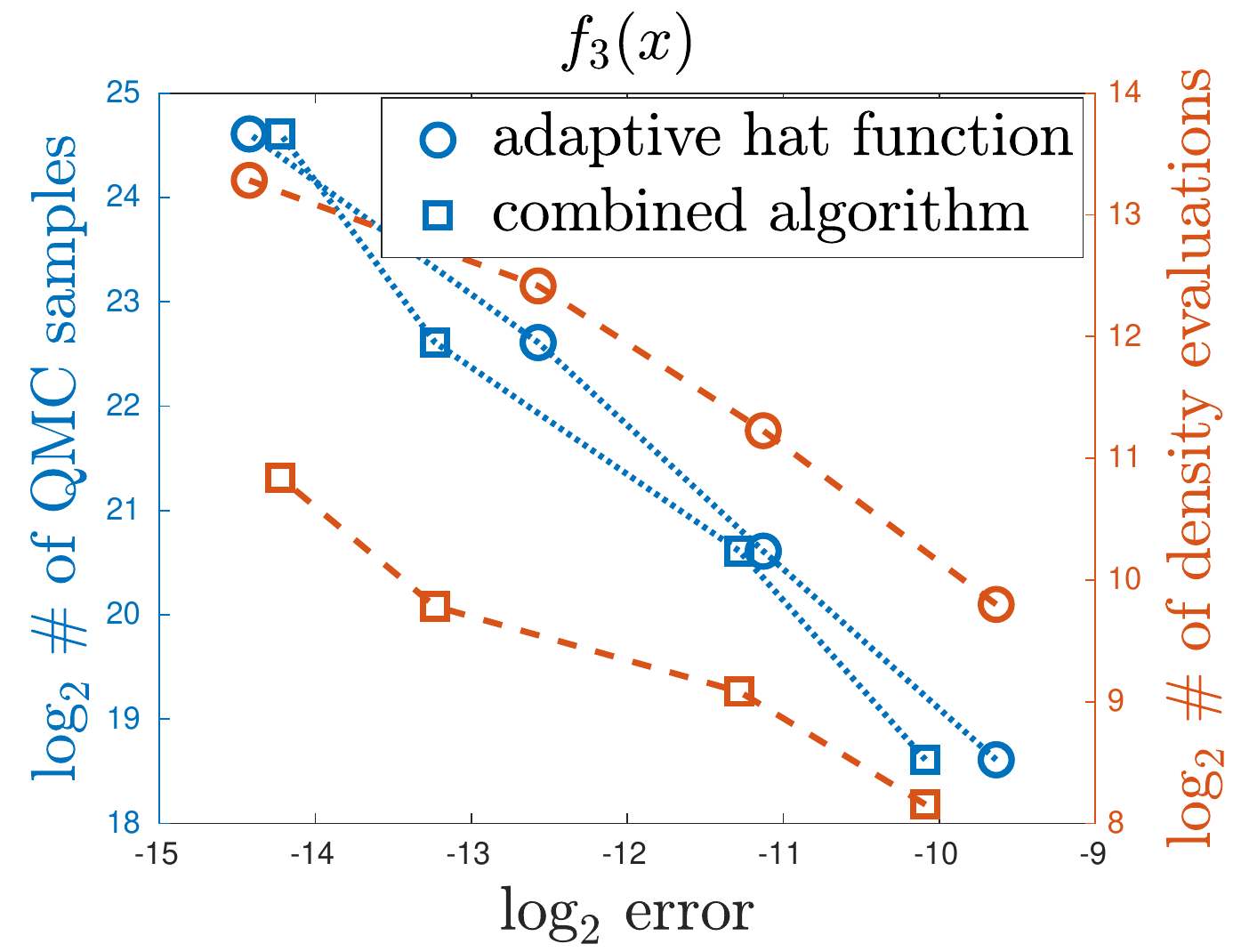}
\caption{The two-dimensional test case with three instances of Genz functions.}\label{fig:twod}
\end{figure}

We take a double refinement strategy to test the convergence of the QMC integration rules associated with the adaptive hat function approximation (cf. Section~\ref{sec:3.2}) and the combined algorithm (cf. Section~\ref{sec:5.3}). We first construct the hat function approximations with a sequence of decreasing target error threshold $\epsilon^{(k)} = 4^{-k} \cdot 5 \cdot 10^{-4}$ for $k\in\mathbb{N}_0$. Then, for each of the approximations in the sequence, indexed by $k\in\mathbb{N}_0$, we use the corresponding QMC integration rule with a total of $N^{(k)} = 4^{k+1} \cdot 10^5$ number of QMC samples. The estimated integration errors, the number of target density function evaluations used for building the hat function approximations, and the number of QMC samples are reported in Fig.~\ref{fig:twod}, in which the results obtained using the adaptive hat function approximation and the combined algorithm  are indicated by circles and squares, respectively. For all three instances of Genz functions, we observe a similar error decay rate of the QMC integration rules---it is about $N^{-0.8}$ for the adaptive hat function approximation and about $N^{-0.7}$ for the combined algorithm. While both methods have comparable convergence rates, we observe that the combined algorithm requires an order of magnitude less number of target density evaluations to build the approximation compared to the adaptive hat function approximation. 

\subsection{Bayesian estimation and prediction of a predator-prey system}

The predator-prey model is a system of coupled ordinary differential equations (ODEs) frequently used to describe the dynamics of biological systems. The populations of predator (denoted by $Q$) and prey (denoted by $P$) change over time according to a pair of ODEs
\begin{equation}\label{eq:pp}
\left\{\begin{array}{ll}
\displaystyle \frac{{\rm d} P}{{\rm d} t} & = \displaystyle \rho_P P \Big(1 - \frac{P}{K}\Big) - u \Big(\frac{PQ }{\alpha + P}\Big), \vspace{0.3em} \\
\displaystyle \frac{{\rm d} Q}{{\rm d} t} & = \displaystyle v \Big(\frac{PQ }{\alpha + P}\Big) - \rho_Q Q, 
\end{array}\right.
\end{equation}
with initial conditions $P(t=0) = P_0$ and $Q(t=0) = Q_0$.
The dynamical system is controlled by several parameters.
In the absence of the predator, the population of the prey evolves according to the logistic equation characterised by $\rho_P$ and $K$. In the absence of the prey, the population of the predator decreases exponentially at a rate of $\rho_Q$. In addition, the two populations have a nonlinear interaction characterised by $u$, $v$ and $\alpha$.
In this example, we assume that the initial condition $(P_0, Q_0)$, the parameter $u$, and the parameter $v$ are known. They take values $(P_0, Q_0, u, v) = (50, 5, 1.2, 0.5)$. The goal is to estimate the rest of the four parameters
\[
\bsx = ( \rho_P, K, \alpha, \rho_Q ),
\]
from observed populations of the predator and prey at time instances $t_i$ for $i \in \{1, \ldots, n_T\}$.

We use the Bayesian framework. As the starting point, we assign a 
uniform prior density to $\bsx$, which is uniformly distributed on the interval $[\bsa, \bsb]$ with 
$
\bsa = (0.36, 60, 15, 0.18)
$
and
$
\bsb = (0.96, 160, 40, 0.48).
$
Let $\bsy \in \mathbb{R}^{2n_T}$ denote the observed populations of the predator and prey.
We define a forward model $G: [\bsa, \bsb] \rightarrow \mathbb{R}^{2n_T}$ in the form of $G(\bsx) = [P(t_i, \bsx), Q(t_i, \bsx)]_{i=1}^{n_T}$ to represent the populations of the predator and prey computed at $\{t_i\}_{i =1}^{n_T}$ for given parameters $\bsx$.
Assuming independent and identically distributed (i.i.d.) normal noise in the observed data, one can define the unnormalized posterior density
\[
\pi(\bsx) = \exp\left(-\frac{1}{2\sigma^2}\|G(\bsx) - \bsy\|^2_2\right),\quad \bsx \in [\bsa, \bsb],
\]
where $\sigma$ is the standard deviation of the normally distributed noise. 
Synthetic observed data are used in this example. With $n_T = 13$ time instances $t_{i} = (i-1)\times 25/6$ and a given parameter $\bsx_{\rm true} = (0.6, 100, 25, 0.3)$, we generate synthetic noisy data $\bsy = \bsy_{\rm true} + \eta$, where $\eta$ is a realization of the i.i.d. zero mean normally distributed noise with standard deviation $\sigma = \sqrt{2}$.
To illustrate the behaviour of the posterior density, we plot the kernel density estimates of the marginal posterior densities in Fig.~\ref{fig:pp}.
Note that parameters are significantly correlated, making the posterior density function challenging to explore.

\begin{figure}
\centering
\includegraphics[width=0.7\linewidth]{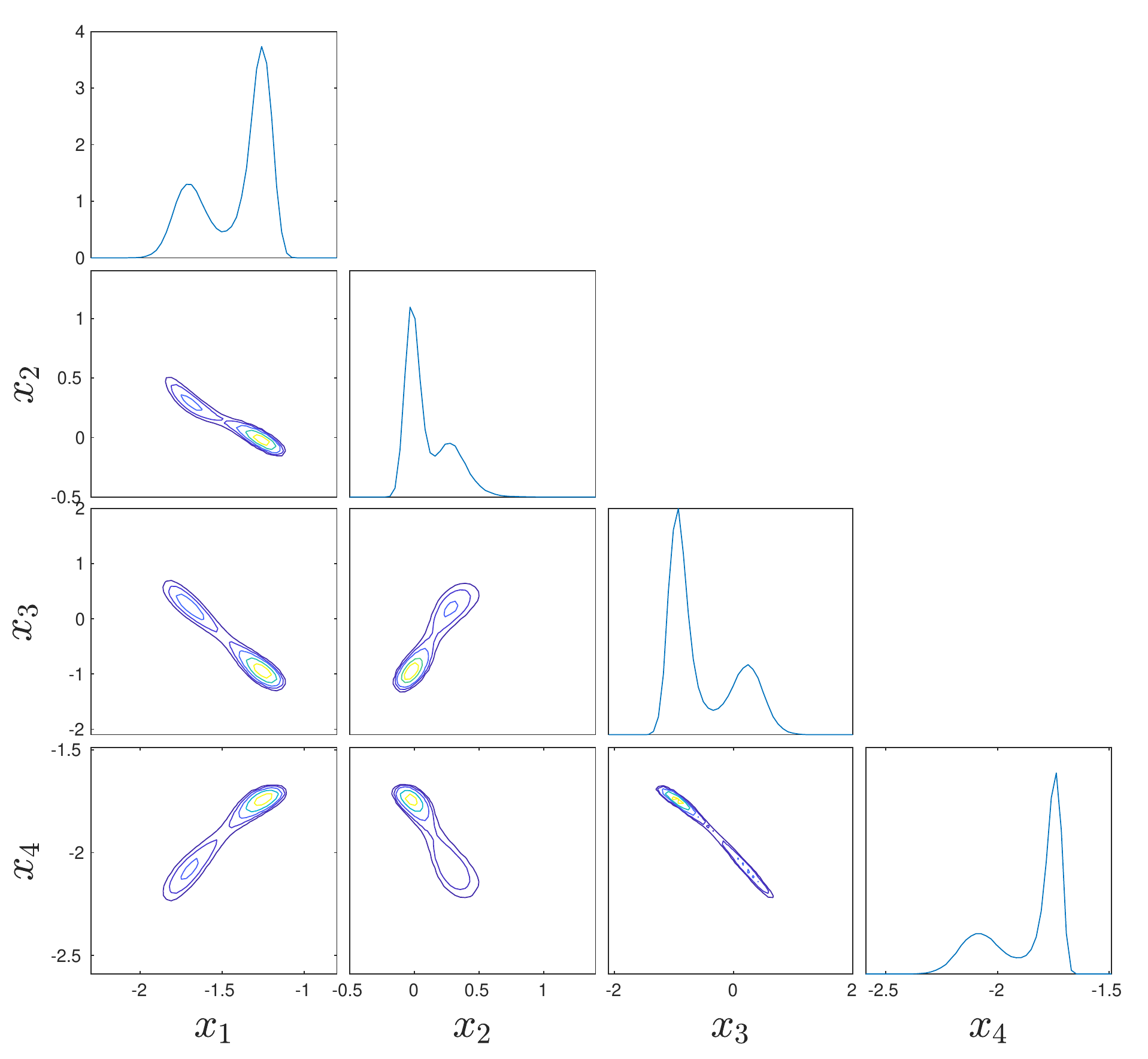}
\caption{The predator and prey example. Marginal posterior distributions.}\label{fig:pp}
\end{figure}

In this example, we separately estimate the risks of the prey population below a threshold $\tau_P=25$ and the predator population below a threshold $\tau_Q=15$, both at a given time $T=120$, which define functions of interest
\[
f_P(\bsx; \tau_P) = \mathbb{1}_{(-\infty, \tau_P]}(P(T, \bsx)) \quad  \text{and}  \quad f_Q(\bsx; \tau_Q) = \mathbb{1}_{(-\infty, \tau_Q]}(Q(T, \bsx)).
\]
Since the above risk functions $f_P$ and $f_Q$ do not have finite norm $\|f_P\|_{p,s,\bsgamma}, \|f_Q\|_{p,s,\bsgamma}$ (see \eqref{def:norm}), we also consider estimating the first, second, and third moments of $P(T, \bsx)$ and $Q(T, \bsx)$ at $T=120$. 

In this example, the adaptive hat function approximation (cf. Section~\ref{sec:3.2}) does not provide a valid approximation after using $10^6$ density evaluations. Thus, we only consider the combined algorithm (cf. Section~\ref{sec:5.3}) and the associated QMC integration rule. Similar to the previous example, we first construct the hat function approximations with a sequence of decreasing target error thresholds $\epsilon^{(k)} = 4^{-k} \cdot 5 \cdot 10^{-6}$ for $k\in\mathbb{N}_0$. Then, for each of the approximations in the sequence, indexed by $k\in\mathbb{N}_0$, we use the corresponding QMC integration rule with a total of $N^{(k)} = 4^{k} \cdot 10^5$ number of QMC samples. The number of target density function evaluations used for building the hat function approximations for each of the target error thresholds and marginal distributions of $P(T, \bsx)$ and $Q(T, \bsx)$ at $T=120$ are shown in Fig.~\ref{fig:pp_pred}. The estimated integration errors and the number of QMC samples are reported in Fig.~\ref{fig:pp_conv}, in which the results for predator and prey are indicated by circles and squares, respectively. For all the estimated moments, we observe a similar error decay rate of about $N^{-1}$, in which the rate of estimated moments of the predator is slightly higher than those of the prey. For the estimated risks, we observe asymptotically decreasing errors with increasing numbers of QMC points. However, the error decay rates are not smooth. Using the results given in Fig.~\ref{fig:pp_conv}, the estimated error decay rates for both risk functions $f_P$ and $f_Q$ are about $N^{-0.74}$.

\begin{figure}
\centering
\includegraphics[width=0.49\linewidth]{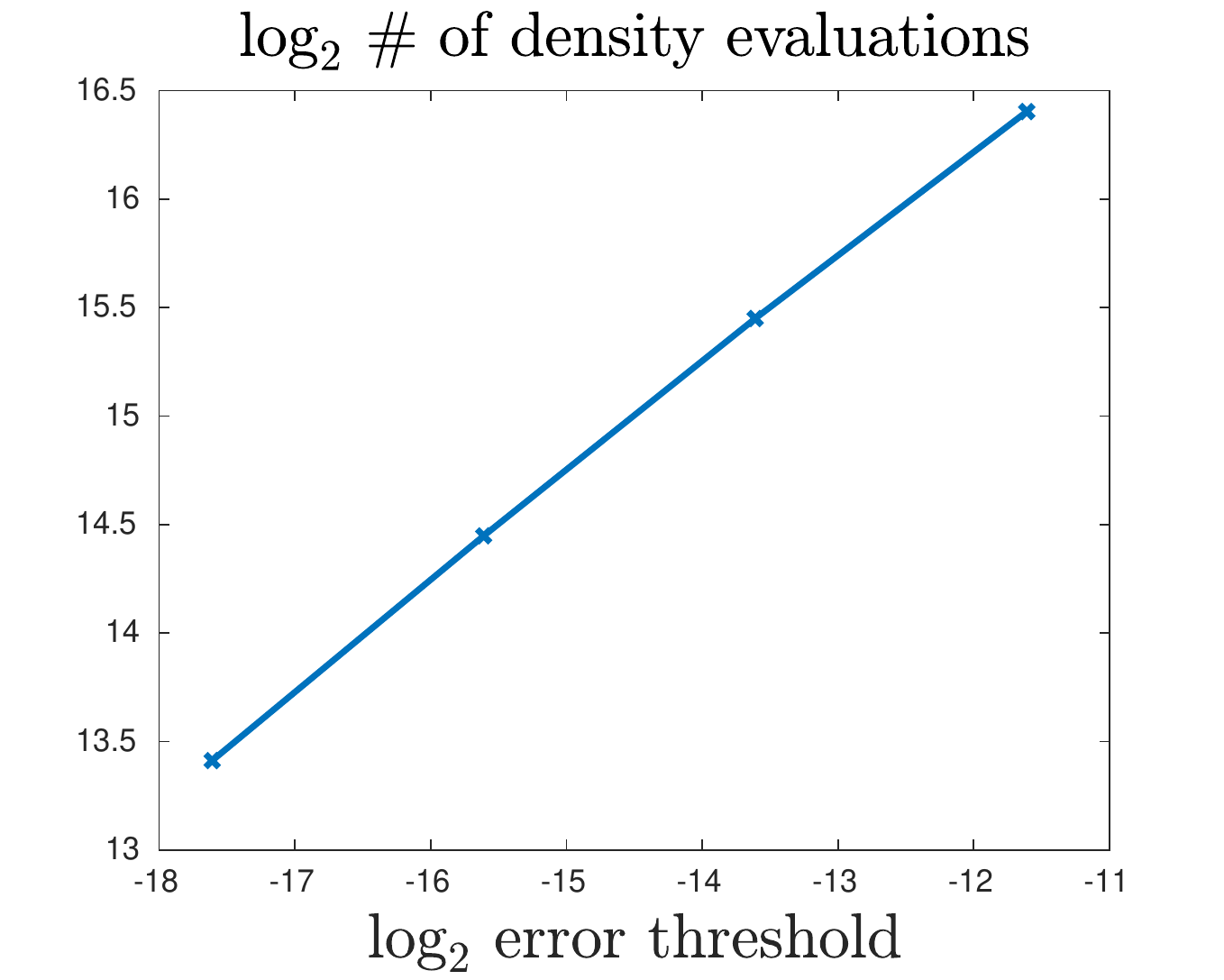}
\includegraphics[width=0.49\linewidth]{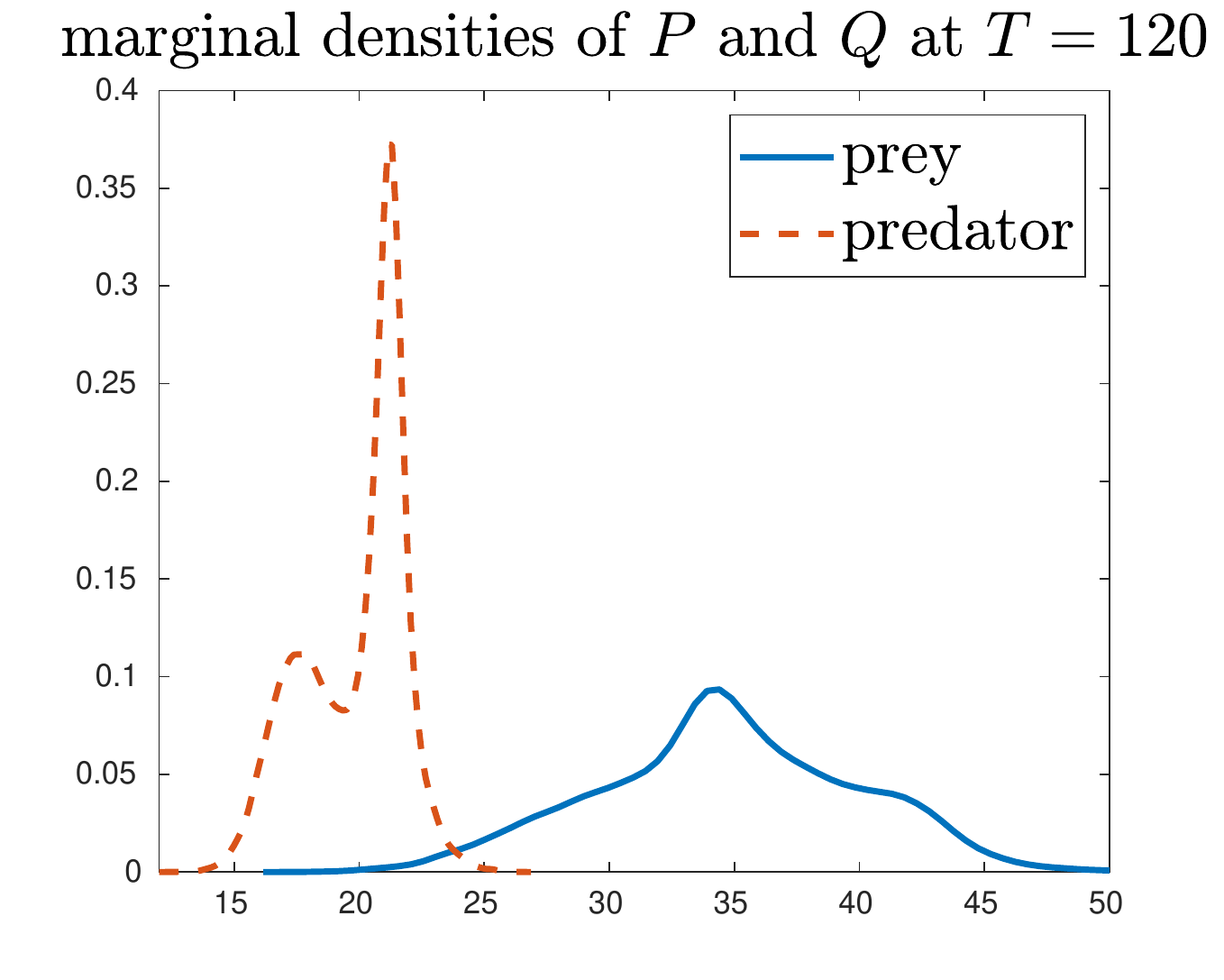}
\caption{The predator and prey example. Left: the number of target density function evaluations used for building the approximations for each of the target error thresholds. Right: marginal distributions of $P(T, \bsx)$ and $Q(T, \bsx)$ at $T=120$.}\label{fig:pp_pred}
\end{figure}

\begin{figure}
\centering
\includegraphics[width=0.49\linewidth]{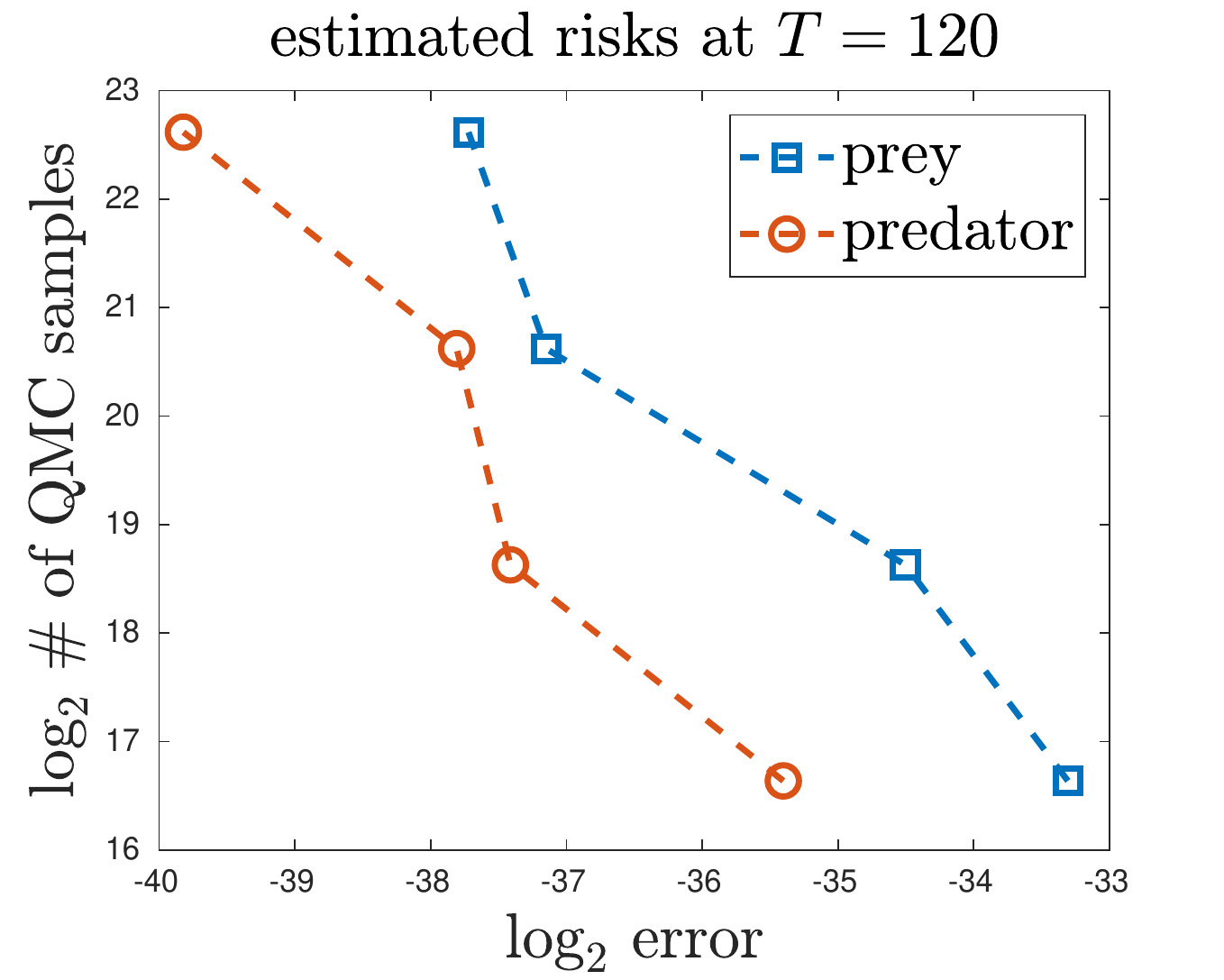}
\includegraphics[width=0.49\linewidth]{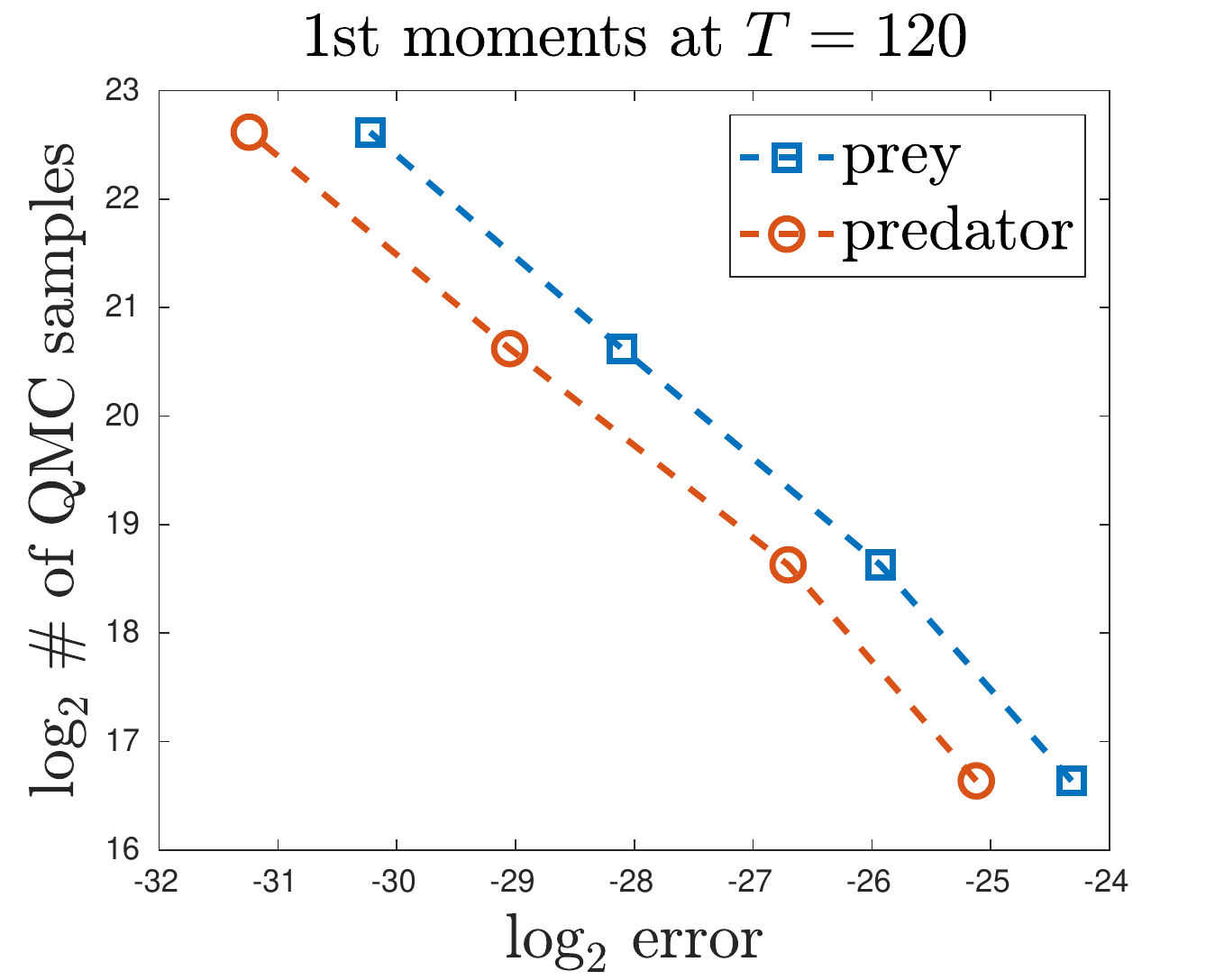}\\
\includegraphics[width=0.49\linewidth]{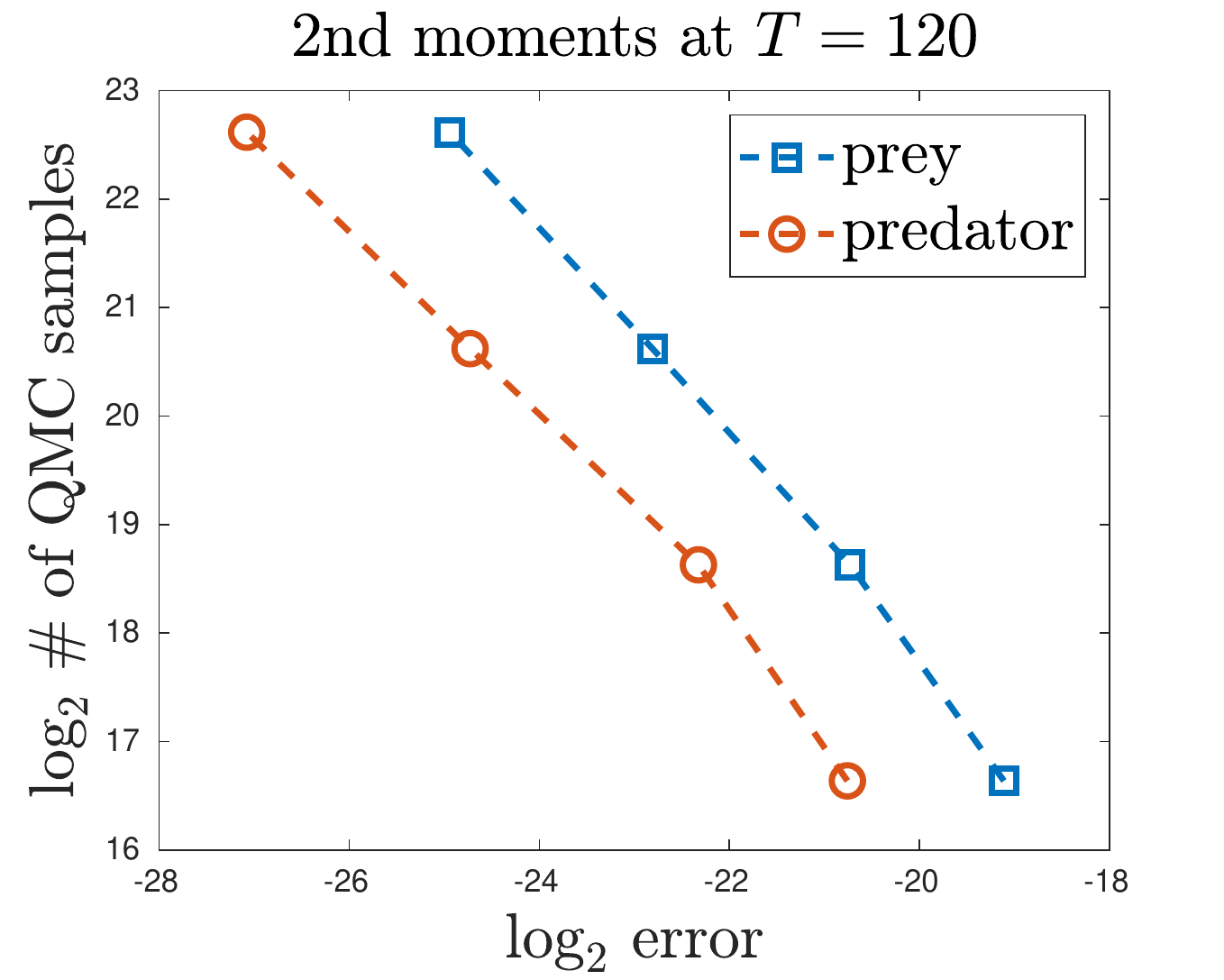}
\includegraphics[width=0.49\linewidth]{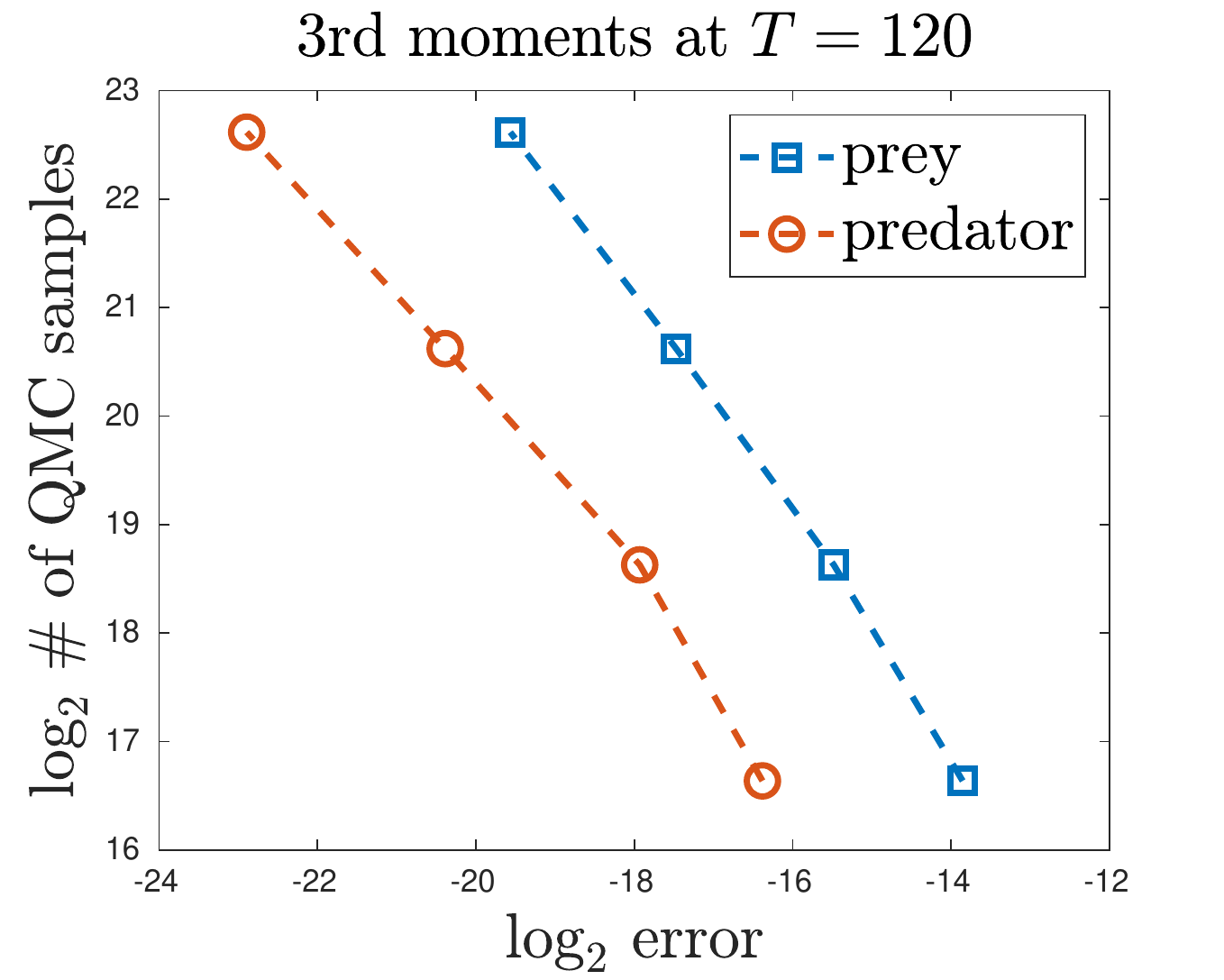}
\caption{The predator and prey example. Convergence of various estimators.}\label{fig:pp_conv}
\end{figure}

\begin{appendices}

\section{Auxiliary results}\label{sec:aux}

\textbf{The inverse cumulative distribution function for hat functions.} We consider $h_{\ell, m}$ as a function (non-normalized) probability density function. It can be checked that the normalization constant is $m/(b-a)$, hence we define
\begin{equation*}
\varphi_{\ell, m}(x) = \frac{m}{b-a} h_{\ell,m}(x),
\end{equation*}
and the cumulative distribution function
\begin{equation*}
\Phi_{\ell,m}(x) = \int_{-\infty}^x \varphi_{\ell, m}(z) \,\mathrm{d} z = \begin{cases} 0 & \mbox{if } x \le y_{\ell-1}, \\[0.5em] \frac{(x-y_{\ell-1})^2 }{2} \left(\frac{m}{b-a}\right)^2 & \mbox{if } y_{\ell-1} \le x \le y_{\ell}, \\[0.5em] 1 - \frac{(y_{\ell+1}-x)^2 }{2} \left(\frac{m}{b-a}\right)^2 & \mbox{if } y_{\ell} \le x \le y_{\ell+1} \\[0.5em] 1 & \mbox{if } x \ge y_{\ell+1}. \end{cases}
\end{equation*}
The inverse cumulative distribution function on the interval $[y_{\ell-1}, y_{\ell+1}]$ is given by
\begin{equation*}
\Phi^{-1}_{\ell, m}(z) = \begin{cases} y_{\ell-1} + \sqrt{2 z}\,\frac{ b-a}{m} & \mbox{if } 0 \le z \le 1/2, \\[0.5em] y_{\ell+1} - \sqrt{2 (1-z)}\, \frac{b-a}{m} & \mbox{if } 1/2 \le z \le 1. \end{cases}
\end{equation*}
Using $\Phi^{-1}_{\ell, m}$ we can transform a QMC point set for each $\ell$ and $m$.

\textbf{The inverse cumulative distribution function for adaptive hat functions.} Again we consider $h_{k_j, j}$ as a non-normalized probability density function. The normalization constant is
\begin{equation}\label{norm:const}
c_{k_j}^{(j)}=\begin{cases}
\frac{2}{y^{(j)}_{1}-a_j} & \mbox{if } k_j=0,\\[0.5em]
\frac{2}{y^{(j)}_{k_j+1}-y^{(j)}_{k_j-1}} & \mbox{if } 1 \le k_j < K_j,\\[0.5em]
\frac{2}{b_j-y^{(j)}_{K_j-1}} & \mbox{if } k_j = K_j. 
\end{cases}
\end{equation}
Define
\begin{equation}\label{eq_phikj}
\varphi_{k_j,j}(x) = c^{(j)}_{k_j} h_{k_j, j}(x),
\end{equation}
and the cumulative distribution functions
\begin{equation*}
\Phi_{0,j}(x) = \int_{-\infty}^x \varphi_{0,j}(z) \rd z = 
\begin{cases}
0 & \mbox{if } x \le a_j, \\[0.5em]
1 - \frac{(y^{(j)}_1 - x)^2}{(y^{(j)}_1 - a_j)^2} & \mbox{if } a_j \le x \le y^{(j)}_1, \\[0.5em]
1 & \mbox{if } x \ge y^{(j)}_1,
\end{cases}
\end{equation*}

\begin{equation*}
\Phi_{k_j,j}(x) = \int_{-\infty}^x \varphi_{k_j,j}(z) \rd z = 
\begin{cases}
0 & \mbox{if } x \le y^{(j)}_{k_j -1}, \\[0.5em]
\frac{(x - y^{(j)}_{k_j-1})^2}{(y^{(j)}_{k_j} - y^{(j)}_{k_j-1}) ( y^{(j)}_{k_j + 1} - y^{(j)}_{k_j-1} )} & \mbox{if } y^{(j)}_{k_j-1} \le x \le y^{(j)}_{k_j}, \\[0.5em]
1-\frac{(y_{k_j+1}^{(j)}-x)^2}{(y_{k_j+1}^{(j)}-y_{k_j}^{(j)})(y_{k_j+1}^{(j)}-y_{k_j-1}^{(j)})}& \mbox{if } y^{(j)}_{k_j} \le x \le y^{(j)}_{k_j+1}, \\[0.5em]
1 & \mbox{if } x \ge y^{(j)}_{k_j+1},
\end{cases}
\end{equation*}
for $k_j \in \{1,2,\ldots,K_j-1\}$, and
\begin{equation*}
\Phi_{K_j,j}(x) = \int_{-\infty}^x \varphi_{K_j,j}(z) \rd z = 
\begin{cases}
0 & \mbox{if } x \le y^{(j)}_{K_j-1}, \\[0.5em]
\frac{(x - y^{(j)}_{K_j-1})^2}{(b_j - y^{(j)}_{K_j-1})^2} & \mbox{if } y^{(j)}_{K_j-1} \le x \le b_j, \\[0.5em]
1 & \mbox{if } x \ge b_{j}.
\end{cases}
\end{equation*}
Thus the inverse cumulative distribution function on the interval $[y_{k_j-1}^{(j)}, y_{k_j+1}^{(j)}]$ for $k_j \in \{1,2,\ldots,K_j-1\}$ is given by
\begin{equation*}
\Phi^{-1}_{k_j,j}(z) = 
\begin{cases}
y_{k_j-1}^{(j)}+\sqrt{z \ (y_{k_j}^{(j)}-y_{k_j-1}^{(j)}) (y_{k_j+1}^{(j)}-y_{k_j-1}^{(j)})} & \mbox{if } 0 \le z \le \frac{y_{k_j}^{(j)}-y_{k_j -1}^{(j)}}{y_{k_j+1}^{(j)}-y_{k_j -1}^{(j)}}, \\[0.5em]
y_{k_j+1}^{(j)}-\sqrt{(1-z) \ (y_{k_j}^{(j)}-y_{k_j-1}^{(j)}) (y_{k_j+1}^{(j)}-y_{k_j-1}^{(j)})} & \mbox{if } \frac{y_{k_j}^{(j)}-y_{k_j -1}^{(j)}}{y_{k_j+1}^{(j)}-y_{k_j -1}^{(j)}} \le z \le 1.
\end{cases}
\end{equation*}
For the interval $[a_j, y^{(j)}_{1}]$ it is given by
\begin{equation*}
\Phi^{-1}_{0,j}(z) = y^{(j)}_1 - \sqrt{1-z} \ (y^{(j)}_1 - a_j)
\end{equation*}
and for the interval $[y^{(j)}_{K_j-1}, b_j]$ it is given by
\begin{equation*}
\Phi^{-1}_{K_j,j}(z) = y_{K_j-1}^{(j)} + \sqrt{z} \ (b_j - y^{(j)}_{K_j-1}).
\end{equation*}

\end{appendices}

\backmatter

\bmhead{Acknowledgments and declarations}

T. Cui is supported by the ARC Discovery Project DP210103092. J. Dick was supported by the ARC Discovery Project DP220101811 and the Special Research Program “Quasi-Monte Carlo Methods: Theory and Applications” funded by the Austrian Science Fund (FWF) Project F55-N26. F.~Pillichshammer is supported by the Austrian Science Fund FWF),Project F5509-N26, which is part of the Special Research Program ``Quasi-Monte Carlo Methods: Theory and Applications''. The authors declare that they have no conflict of interest.

\end{document}